\def\@setthanks{\vspace{-\baselineskip}\def\thanks##1{\@par\noindent##1\@addpunct.}\thankses}
\def\@setaddresses{%
  \par\nobreak\begingroup\footnotesize\interlinepenalty\@M\bigskip
  \def\author##1{}%
  \def\address##1##2{%
    \par\nobreak\addvspace\medskipamount\noindent
    \@ifnotempty{##1}{\textit{##1}: }{\ignorespaces##2}%
  }%
  \def\email##1##2{\ifvmode\else\unskip; \fi\textit{email}: \nohyphens{##2}}%
  \addresses\par
  \endgroup
}
\g@addto@macro\@adjustvertspacing{\belowdisplayshortskip\belowdisplayskip}
\newtheorem{theorem}{Theorem}[section]
\newtheorem{lemma}[theorem]{Lemma}
\newtheorem{proposition}[theorem]{Proposition}
\newtheorem{conjecture}[theorem]{Conjecture}
\newlist{enumeratei}{enumerate}{1}
\newlist{enumeratei'}{enumerate}{1}
\newlist{enumeratea}{enumerate}{1}
\setlist[enumeratei]{label=\upshape{(\roman*)}, leftmargin=2.3em}
\setlist[enumeratei']{label=\upshape{(\roman*$'$)}, leftmargin=2.3em}
\setlist[enumeratea]{label=\upshape{(\alph*)}, leftmargin=2.3em}
\setlist[itemize]{label=\upshape{\textbullet}, labelindent=1.5em, leftmargin=2.3em}
\newcommand\NP{\textsf{NP}}
\newcommand\calB{\mathcal{B}}
\newcommand\calC{\mathcal{C}}
\newcommand\calG{\mathcal{G}}
\newcommand\calH{\mathcal{H}}
\newcommand\calM{\mathcal{M}}
\newcommand\calN{\mathcal{N}}
\newcommand\calS{\mathcal{S}}
\newcommand\calU{\mathcal{U}}
\newcommand\calX{\mathcal{X}}
\newcommand\setN{\mathbb{N}}
\newcommand\setR{\mathbb{R}}
\newcommand\set[1]{\{#1\}}
\newcommand\size[1]{{\lvert #1\rvert}}
\newcommand\floor[1]{\lfloor #1\rfloor}
\newcommand\ceil[1]{\lceil #1\rceil}
\newcommand\burling[1]{D_{#1}}
\newcommand\burlingprime[1]{D'_{#1}}
\DeclareMathOperator\Sub{\mathsf{Sub}}
\DeclareMathOperator\Ext{\mathsf{Ext}}
\DeclareMathOperator\Dup{\mathsf{Dup}}
\let\leq\leqslant
\let\geq\geqslant
\let\setminus\smallsetminus
\title{Burling graphs in graphs with large chromatic number}
\author[T.~Abrishami, M.~Briański, J.~Davies, X.~Du, J.~Masaříková, P.~Rzążewski, B.~Walczak]{Tara Abrishami\and Marcin Briański\and James Davies\and Xiying Du\and Jana Masaříková\and Paweł Rzążewski\and Bartosz Walczak}
\address[Tara Abrishami]{Department of Mathematics, Stanford University, Stanford, USA}
\email{tara.abrishami@stanford.edu}
\address[Marcin Briański]{Department of Theoretical Computer Science, Faculty of Mathematics and Computer Science, Jagiellonian University, Kraków, Poland}
\email{marcin.brianski@doctoral.uj.edu.pl}
\address[James Davies]{Faculty of Mathematics and Computer Science, Leipzig University, Leipzig, Germany}
\email{jgdavies@uwaterloo.ca}
\address[Xiying Du]{School of Mathematics, Georgia Institute of Technology, Atlanta, USA}
\email{xdu90@gatech.edu}
\address[Jana Masaříková]{Institute of Informatics, University of Warsaw, Warsaw, Poland}
\email{jnovotna@mimuw.edu.pl}
\address[Paweł Rzążewski]{Faculty of Mathematics and Information Science, Warsaw University of Technology, Warsaw, Poland, and Institute of Informatics, University of Warsaw, Warsaw, Poland}
\email{pawel.rzazewski@pw.edu.pl}
\address[Bartosz Walczak]{Department of Theoretical Computer Science, Faculty of Mathematics and Computer Science, Jagiellonian University, Kraków, Poland}
\email{bartosz.walczak@uj.edu.pl}
\thanks{Tara Abrishami was supported by the National Science Foundation Award Number DMS-2303251 and the Alexander von Humboldt Foundation.
Marcin Briański and Bartosz Walczak were partially supported by the National Science Centre of Poland grant 2019/34/E/ST6/00443.
James Davies was supported by the Alexander von Humboldt Foundation in the framework of the Alexander von Humboldt Professorship of Daniel Kráľ endowed by the Federal Ministry of Education and Research.
Jana Masaříková was supported by the National Science Centre of Poland grant 2022/45/N/ST6/04232.
Paweł Rzążewski was supported by the National Science Centre of Poland grant 2024/54/E/ST6/00094.}
\begin{document}

\begin{abstract}
A graph class is $\chi$-bounded if the only way to force large chromatic number in graphs from the class is by forming a large clique.
In the 1970s, Erdős conjectured that intersection graphs of straight-line segments in the plane are $\chi$-bounded, but this was disproved by Pawlik et al.\ (2014), who showed another way to force large chromatic number in this class---by triangle-free graphs $B_k$ with $\chi(B_k)=k$ constructed by Burling (1965).
This also disproved the celebrated conjecture of Scott (1997) that classes of graphs excluding induced subdivisions of a fixed graph are $\chi$-bounded.

We prove that in broad classes of graphs excluding induced subdivisions of a fixed graph, including the increasingly more general classes of segment intersection graphs, string graphs, region intersection graphs, and hereditary classes of graphs with finite asymptotic dimension, large chromatic number can be forced \emph{only} by large cliques or large graphs $B_k$.

One corollary is that the hereditary closure of $\{B_k\colon k\geq 1\}$ forms a minimal hereditary graph class with unbounded chromatic number---the second known graph class with this property after the class of complete graphs.
Another corollary is that the decision variant of approximate coloring in the aforementioned graph classes can be solved in polynomial time by exhaustively searching for a sufficiently large clique or copy of $B_k$.
We also discuss how our results along with some results of Chudnovsky, Scott, and Seymour on the existence of colorings can be turned into polynomial-time algorithms for the search variant of approximate coloring in string graphs (with intersection model in the input) and other aforementioned graph classes.
Such an algorithm has not yet been known for any graph class that is not $\chi$-bounded.
\end{abstract}

\maketitle

\section{Introduction}

One of the central research topics in graph theory is aimed at understanding what makes the chromatic number of a graph large.
Every graph $G$ satisfies $\chi(G)\geq\omega(G)$, where $\chi(G)$ and $\omega(G)$ denote, respectively, the chromatic number of $G$ and the maximum size of a clique in $G$.
Deciding whether a given graph $G$ satisfies $\chi(G)\leq k$ is \NP-hard for every $k\geq 3$.
The \emph{$(k,\ell)$-coloring problem}, originating in~\cite{GJ76} and central to the research on promise constraint satisfaction~\cite{KO22}, is to decide whether a given graph $G$ satisfies $\chi(G)\leq k$ or $\chi(G)>\ell$ under the assumption (promise) that one of these cases holds.
It is conjectured to be \NP-hard for all $\ell\geq k\geq 3$; this conjecture has been confirmed when $\ell\leq\max\bigl(2k,\smash{\binom{k}{\floor{k/2}}}\bigr)-1$~\cite{BBKO21,WZ20}.
The $(k,\ell)$-coloring problem was also proved to be hard for all $\ell\geq k\geq 3$ under various perfect-completeness variants of the Unique Games Conjecture \cite{BKLM22,DMR09,GS20}.

If chromatic number greater than $\ell$ could be witnessed by a substructure of bounded size which itself has chromatic number greater than $k$ (such as a clique of size $k+1$), then this would lead to a polynomial-time $(k,\ell)$-coloring algorithm, which would exhaustively look for a witness in the input graph $G$ and report that $\chi(G)>\ell$ if it finds it or $\chi(G)\leq k$ if it does not.
However, for every~$s$, there exist graphs with arbitrarily large chromatic number in which all cycles are longer than~$s$~\cite{Erd59}, which implies that all their induced subgraphs of size at most $s$ are two-colorable.
Still, as the following discussion shows, it makes sense to look for bounded-size witnesses of large chromatic number in restricted classes of graphs.
There are reasons to consider a clique as a ``canonical'' potential witness of large chromatic number:
\begin{itemize}
\item it is the simplest witness possible;
\item the natural linear programming relaxations of $\chi$ (understood as the minimum size of a covering by independent sets) and $\omega$ are dual to each other;
\item in various specific classes of graphs, a large clique is the only way to force large chromatic number.
\end{itemize}

Graphs $G$ for which the equality $\chi=\omega$ holds in $G$ as well as in all induced subgraphs of $G$ are called \emph{perfect}.
Hence, in perfect graphs, chromatic number greater than $k$ is always witnessed by a clique of size $k+1$.
In particular, $k$-coloring can be solved in polynomial time for any constant $k$, and actually the chromatic number can be \emph{computed} in polynomial time via semidefinite programming~\cite{GLS84}.
Perfect graphs have been thoroughly studied; in particular, they have a full structural characterization known as the strong perfect graph theorem~\cite{CRST06}.

A weaker condition (which makes sense only on a class of graphs rather than a single graph) is $\chi$-boundedness: a graph class $\calG$ is \emph{$\chi$-bounded} if there is a function $f\colon\setN\to\setN$ such that for every graph $G\in\calG$, every induced subgraph $H$ of $G$ satisfies $\chi(H)\leq f(\omega(H))$.
Hence, in a $\chi$-bounded graph class, chromatic number greater than $f(k)$ is always witnessed by a clique of size $k+1$, so $(k,f(k))$-coloring can be solved in polynomial time.
Important examples of $\chi$-bounded graph classes include rectangle graphs (intersection graphs of axis-parallel rectangles in the plane)~\cite{AG60}, circle graphs (intersection graphs of chords of a circle)~\cite{Gya85}, and $P_t$-free graphs for any constant $t$~\cite{Gya87}, where we say that a graph $G$ is \emph{$F$-free} if it does not contain an induced subgraph isomorphic to $F$.
Systematic study of $\chi$-boundedness, initiated by Gyárfás~\cite{Gya87}, has been a fruitful area of research in structural graph theory with immense progress in the recent 12 years; see~\cite{SS20c} for a survey.

A major open problem in this area is the Gyárfás--Sumner conjecture~\cite{Gya75,Sum81}, which asserts that the class of $F$-free graphs is $\chi$-bounded for every fixed forest $F$.
(An analogous statement is false when $F$ has a cycle, because of high-chromatic graphs with no short cycles.)
Another problem that used to be open for a long time, due to Gyárfás~\cite{Gya87}, asked whether for every $\ell$, the class of graphs excluding induced cycles of length at least $\ell$ is $\chi$-bounded; this was proved in~\cite{CSS17}.
Scott~\cite{Sco97} proved the following weakening of the Gyárfás--Sumner conjecture: for every forest $F$, the class of graphs excluding all subdivisions of $F$ as induced subgraphs is $\chi$-bounded.
(Actually, it suffices to exclude bounded-length subdivisions for this to hold.)
Motivated by this and the aforementioned problem of Gyárfás on excluded long induced cycles, Scott~\cite{Sco97} conjectured that the class of graphs excluding induced subdivisions of $F$ is $\chi$-bounded for every graph $F$ (not necessarily a forest).

A \emph{string graph} is an intersection graph of arbitrary curves in the plane.
Every intersection graph of arcwise connected geometric objects in the plane can be modeled as a string graph using approximation of the objects by curves.
The class of string graphs attracted significant attention, both from the structural and the algorithmic points of view, as it neatly generalizes various natural graph classes.
For a number of structural results on string graphs, see \cite{Dav25,FP08,FP10,FP12,FP14,FPT11,Mat14,PT02,Tom23}.
A~well-known conjecture, dating back to the 1970s with Erdős in the special case of straight-line segments and due to Kratochvíl and Nešetřil in general, asked whether the class of string graphs is $\chi$-bounded; see \cite[Problem~1.9]{Gya87}, \cite[Problem~2 in Section~9.6]{BMP05}, and~\cite{KN95}.
These conjectures were special cases of Scott's conjecture, because string graphs exclude induced subdivisions of every graph $F$ that is the $1$-subdivision of a non-planar graph (i.e., a non-planar graph with an additional vertex placed ``in the middle'' of every edge)~\cite{Sin66}.

The conjecture on segment/string graphs and Scott's conjecture were disproved by Pawlik et~al.~\cite{PKK+14}, who showed that a particular little-known (at that time) sequence of triangle-free graphs with arbitrarily large chromatic number can be represented as intersection graphs of straight-line segments in the plane.
The said sequence of graphs $(B_k)_{k\geq 1}$ was discovered by Burling~\cite{Bur65} and used in his proof that the class of intersection graphs of axis-parallel boxes in $\setR^3$ is not $\chi$-bounded.
We provide the definition of Burling's sequence later, in \Cref{sec:overview}.
For now, think of $(B_k)_{k\geq 1}$ as some sequence of triangle-free graphs such that $\chi(B_k)=k$ for every $k\geq 1$.

\subsection*{Our contribution}

We prove that for string graphs and other special but still broad classes of graphs that are not $\chi$-bounded and exclude induced subdivisions of some fixed graph, Burling's construction is the \emph{only} way to force large chromatic number without creating large cliques.
Formally, for a graph $G$, let $\beta(G)$ denote the maximum $k$ such that $B_k$ is an induced subgraph of $G$.
Note that $\chi(G)\geq\beta(G)$.
We say that a graph class $\calG$ is \emph{Burling-controlled} if there is a function $f\colon\setN\times\setN\to\setN$ such that for every graph $G\in\calG$, every induced subgraph $H$ of $G$ satisfies $\chi(H)\leq f(\omega(H),\beta(H))$.
Equivalently, a class $\calG$ is Burling-controlled if the class of those graphs in $\calG$ that exclude $B_k$ as an induced subgraph is $\chi$-bounded for every $k$.

\begin{theorem}
\label{thm:string-b-bounded}
The class of string graphs is Burling-controlled.
\end{theorem}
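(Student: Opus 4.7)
The plan is to prove Theorem~\ref{thm:string-b-bounded} by induction on $k$, establishing that for every $k\geq 1$ the class of $B_k$-free string graphs is $\chi$-bounded (which by the equivalence stated above is the same as Burling-controlledness). The base case, for small $k$, is immediate: $B_1$ is a single vertex (or $B_2=K_2$), so any $B_k$-free graph with $k$ small has chromatic number at most $k-1$. For the inductive step, fix $k\geq 2$, assume the bound $f_{k-1}$ is already known for $B_{k-1}$-free string graphs, and consider a $B_k$-free string graph $G$ with $\omega(G)\leq\omega$. We want a bound $\chi(G)\leq f_k(\omega)$; equivalently, if $\chi(G)$ is large enough in terms of $\omega$ and $k$, we must locate $B_k$ as an induced subgraph of $G$.

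First I would carry out a BFS layering from an arbitrary vertex and use the standard reduction (iteratively replacing $G$ by a connected induced subgraph of a single BFS layer) to pass to a situation where $\chi(G)$ remains large, every vertex of $G$ has a neighbor in a common ``feeder'' set $F$, and $F$ itself has small chromatic number. Next I would invoke the $\chi$-boundedness of outerstring graphs (Rok--Walczak): for every curve $v\in F$, the neighborhood $N(v)$ inherits an outerstring representation (truncate each intersecting curve at its first hit with $v$), so $\chi(G[N(v)])$ is bounded by a function of $\omega(G)$. A pigeonhole/averaging argument then produces some $v\in F$ whose neighborhood $N(v)$ in the next layer has chromatic number exceeding $f_{k-1}(\omega)$, and by the inductive hypothesis this neighborhood contains an induced copy $H\cong B_{k-1}$.

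The heart of the argument, and where I expect the main obstacle, is to promote a single induced $B_{k-1}$ sitting inside the neighborhood of one vertex into a full induced $B_k$. Burling graphs have a rigid recursive structure (presumably built from the operators $\Sub$, $\Ext$, $\Dup$ reserved in the preamble), so $B_k$ is assembled from $B_{k-1}$ by attaching an entire independent ``rake'' of vertices, each with a precisely prescribed neighborhood in the $B_{k-1}$-copy. Producing such a rake will require iterating the above neighborhood-extraction step many times in parallel, inside suitable subneighborhoods, and then applying Ramsey-type pigeonholing to a large family of candidate attachment vertices to find an independent subfamily all sharing the same ``type'' with respect to $H$; this type must match exactly the one prescribed by Burling's recursion. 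Making all the required non-edges actually hold - as opposed to merely finding a subgraph that looks like $B_k$ up to some extra edges - is the delicate combinatorial step, and will presumably lean on the geometry of the curve representation together with the standing $B_k$-free hypothesis to rule out the ``bad'' adjacency patterns.
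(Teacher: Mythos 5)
There is a genuine gap, and it is at the very first substantive step. You claim that when $\chi(G)$ is large, an averaging argument produces a vertex $v$ with $\chi(N(v))>f_{k-1}(\omega)$, so that the inductive hypothesis plants an induced copy of $B_{k-1}$ inside a single neighborhood $N(v)$. This cannot work: large chromatic number does not force any vertex to have a neighborhood of large chromatic number. Already for $\omega(G)=2$ every neighborhood is an independent set, so $\chi(N(v))=1$ for all $v$, while $\chi(G)$ can be arbitrarily large --- the Burling graphs themselves are triangle-free string graphs witnessing exactly this. In particular, for $k\geq 3$ no copy of $B_{k-1}$ can ever sit inside one neighborhood of a triangle-free graph, so the object your ``heart of the argument'' is supposed to promote to a $B_k$ does not exist in precisely the cases that matter. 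The subsequent plan (iterate the extraction ``in parallel'' and Ramsey-clean the attachment vertices) is left as a hope resting on unspecified geometric input, so the one place where the real difficulty lives is not addressed.

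The paper's route is structurally different and worth internalizing. It never looks for $B_{k-1}$ inside a first neighborhood. Instead it first establishes that string graphs are $2$-controlled (\cref{thm:string-2-controlled}, due to Chudnovsky, Scott, and Seymour) and exclude induced $1$-subdivisions of $K_{3,3}$ (\cref{prop:string-subdivision}), and then proves the purely combinatorial \cref{thm:b-bounded}: any $2$-controlled class excluding induced $1$-subdivisions of $K_{t,t}$ is Burling-controlled. The engine there is a long \emph{multicover} $(\set{x_i}_{i\in I},\set{N_i}_{i\in I},C)$ extracted from second neighborhoods (not first), together with ``homomorphs''/embeddings of an alternative Burling sequence $B'_k$ whose marked vertices are distributed across the many covering sets $N_i$ and whose unmarked vertices lie in the common core $C$; the inductive step glues such embeddings rooted at different vertices of $C$ into an embedding of $B'_{k+1}$, and the outer induction is on $\omega$ via \cref{lem:independent-multicover}. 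If you want to salvage your sketch, the fix is to replace ``one vertex whose neighborhood has large chromatic number'' by ``many vertices whose neighborhoods jointly cover a set of large chromatic number,'' which is exactly the multicover formalism.
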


\Cref{thm:string-b-bounded} provides a full solution to the aforementioned Erdős and Kratochvíl--Nešetřil problems asking for $\chi$-boundedness of segment and string graphs: even though the classes are not $\chi$-bounded, the reasons for high chromatic number are now fully understood.
This also fully classifies the $\chi$-bounded classes of geometric intersection graphs in the plane and therefore generalizes numerous such $\chi$-boundedness results \cite{AG60,CSS21,Gya85,KK97,KW17,LMPW14,McG96,McG00,RW19a,RW19b,Suk14}.
Apart from string graphs, we also prove that the following graph classes are Burling-controlled:
\begin{itemize}
\item the class of region intersection graphs over every proper minor-closed graph class;
\item every hereditary graph class with finite asymptotic dimension.
\end{itemize}
We proceed with definitions that we need to state these results.

A graph class is \emph{hereditary} if it is closed under removing vertices (i.e., taking induced subgraphs), it is \emph{minor-closed} if it is closed under removing vertices, removing edges, and contracting edges (i.e., taking minors), and it is \emph{proper} if it does not contain all graphs.%
\footnote{We implicitly assume that classes of graphs are closed under graph isomorphism, so that only the structure of the graph matters, not the actual set of vertices.}
A \emph{region intersection model} of a graph $G$ over graph $H$ is a mapping $\mu$ from $V(G)$ to connected sets in $H$ such that $\mu(u)\cap\mu(v)\neq\emptyset$ exactly when $uv$ is an edge of $G$.
A graph $G$ is a \emph{region intersection graph} over a proper minor-closed graph class $\calH$ if $G$ has a region intersection model over some graph $H\in\calH$.%
\footnote{The assumption that $\calH$ is minor-closed can be made without loss of generality, because closing the class under taking minors does not change the class of region intersection graphs derived from it.
The class $\calH$ is assumed to be proper because every graph has a region intersection model over a sufficiently large complete graph.}

Region intersection graphs were introduced by Lee~\cite{Lee17} as a generalization of string graphs, which are precisely the region intersection graphs over planar graphs.
Our next result generalizes \Cref{thm:string-b-bounded} to region intersection graphs.

\begin{theorem}
\label{thm:region-b-bounded}
For every proper minor-closed graph class\/ $\calH$, the class of region intersection graphs over\/ $\calH$ is Burling-controlled.
\end{theorem}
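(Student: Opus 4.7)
The plan is to adapt the proof strategy of \Cref{thm:string-b-bounded} to the region intersection setting via the Robertson--Seymour graph minor structure theorem. Fix a proper minor-closed class $\calH$; then $\calH$ excludes some complete graph as a minor, so by the structure theorem every $H\in\calH$ admits a tree-decomposition whose torsos are almost-embeddable in surfaces of Euler genus at most $g=g(\calH)$, with a bounded number of apex vertices and vortices of bounded depth. Given a region intersection graph $G$ over $H$ with model $\mu$, the tree-decomposition of $H$ lifts to a tree-decomposition of $G$ via bags $\{v\in V(G):\mu(v)\cap X\neq\emptyset\}$, which is valid by connectedness of each $\mu(v)$. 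Standard tree-decomposition arguments then reduce bounding $\chi(G)$ to the case of a single almost-embeddable torso, with additive corrections controlled by $\omega(G)$ (for clique adhesions and the bounded set of apex vertices) and by bounded-pathwidth colorings (for vortices, whose pathwidth is bounded).

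For the core case where $H$ is properly embedded in a closed surface $\Sigma$ of Euler genus at most $g$, I plan to cut $\Sigma$ along a bounded system of non-contractible simple closed curves, unrolling it into a disk and splitting each connected region $\mu(v)$ into at most $c=c(g)$ planar sub-regions. The sub-regions form the vertex set of a planar region intersection graph $G'$, that is, a string graph, to which \Cref{thm:string-b-bounded} can be applied. Sub-regions of the same $v$ lie in distinct components of the cut disk and are therefore pairwise non-adjacent in $G'$, so cliques in $G'$ correspond to cliques of the same size in $G$, giving $\omega(G')\leq\omega(G)$. Bounding $\beta(G')$ in terms of $\beta(G)$ should follow from a Ramsey-style pigeonhole within the hierarchical construction of Burling graphs: inside any induced $B_m$ in $G'$, locate a large ``pure'' induced sub-copy that uses at most one sub-region per vertex of $G$, and then project it to an induced $B_{\Omega(m)}$ in $G$.

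The main obstacle is relating $\chi(G)$ to $\chi(G')$. The naive lift of proper colorings only yields $\chi(G')\leq\chi(G)$, the wrong direction; and identifying the sub-regions of each $v$ in $G'$ can in general increase chromatic number well beyond a function of $\chi(G')$ and $c$. To obtain the required bound on $\chi(G)$, the plan is to bypass $G'$ in the chromatic-number analysis and instead directly adapt the argument of \Cref{thm:string-b-bounded} to the surface setting, using the cut system as a topological bookkeeping device to iterate the planar-style argument across a bounded number of charts with interactions controlled by the cut curves. I expect this adaptation to be the most technically involved step, since the planar topology of the ambient space is used essentially in the string-graph argument and must be replicated locally on a surface of genus $g$, with the $B_k$-detection machinery from \Cref{thm:string-b-bounded} then extracting a Burling subgraph of $G$ directly, without passing through the quotient to $G'$.
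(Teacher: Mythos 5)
There is a genuine gap, and you have in fact named it yourself: the entire reduction hinges on bounding $\chi(G)$ in terms of $\chi(G')$, which goes the wrong way for quotients, and the promised fix --- ``directly adapt the argument of \Cref{thm:string-b-bounded} to the surface setting'' --- is not carried out and rests on a misreading of how that theorem is proved. The proof of \Cref{thm:string-b-bounded} does not use planar topology as a geometric device that one could ``replicate locally on a surface''; it is the purely combinatorial \Cref{thm:b-bounded} applied to two facts about string graphs: they are $2$-controlled (\Cref{thm:string-2-controlled}) and they exclude induced $1$-subdivisions of $K_{3,3}$ (\Cref{prop:string-subdivision}). So the correct target for your adaptation is not a topological argument but these two hypotheses, and for region intersection graphs over $\calH$ the second one is already free: by Lee's observation (\Cref{prop:string-subdivision}'s analogue for region intersection graphs), they exclude induced proper subdivisions of any $F\notin\calH$, hence induced $1$-subdivisions of $K_{t,t}$ for suitable $t$. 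What remains is $2$-control, and for that neither the Robertson--Seymour structure theorem nor any surface-cutting is needed: the paper shows directly (\Cref{lem:region-step}, an induction on $t$ peeling off BFS layers of the model) that region intersection graphs over a $K_t$-minor-free class have asymptotic dimension at most $2^{t-2}-1$ (\Cref{thm:region-asdim}), and finite asymptotic dimension yields $2$-control via \Cref{thm:asdim-2-controlled}; the theorem is then immediate from \Cref{thm:asdim-b-bounded}.

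Two further points on the parts you did sketch. First, the structure-theorem machinery creates problems it then cannot solve: a proper coloring of $G$ cannot be assembled from colorings of torsos by ``standard tree-decomposition arguments'' with only additive corrections, because chromatic number does not behave additively over tree-decompositions with large adhesions, and vortices of bounded depth do not reduce to ``bounded-pathwidth colorings'' of the relevant induced subgraphs of $G$ (the regions meeting a vortex need not induce a bounded-pathwidth subgraph of $G$). Second, even the $\beta$-transfer step is flawed as stated: an induced copy of $B_m$ in $G'$ that uses one sub-region per vertex of $G$ projects to a vertex set of $G$ that contains $B_m$ as a subgraph but need not induce it, since two vertices of $G$ may be adjacent via sub-regions outside the chosen copy. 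For the statement you are proving, what you actually need is the contrapositive direction --- lifting a lower bound on $\chi$ to a witness $B_k$ in $G$ itself --- and that is exactly what the multicover/embedding machinery of \Cref{lem:main} delivers once $2$-control and the excluded subdivision are in hand.
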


The concept of asymptotic dimension was introduced by Gromov~\cite{Gro93} for metric spaces and, in particular, in the context of geometric group theory via groups and their Cayley graphs.
More recently, the same concept has been applied in the context of classes of graphs (with the shortest path distance); this has led to several fascinating results on the interplay between structural graph theory and geometric group theory \cite{BJ25,BBE+23,Dav25,DGHM25,Dis25,DN25,Liu25,Pap23}.
Asymptotic dimension can be considered an analog of chromatic number in coarse graph theory---a recently emerging area of research aimed at studying large-scale geometry of graphs~\cite{GP25}.
From this perspective, a key property of asymptotic dimension is that it is preserved under quasi-isometries, that is, mappings between graphs that approximately preserve distances.

We provide the definition of asymptotic dimension for classes of graphs.
Let $d_G(x,y)$ denote the shortest path distance between vertices $x$ and $y$ in $G$.
The \emph{diameter} of a set of vertices $S$ in $G$ is $\max_{x,y\in S}d_G(x,y)$.
A family $\calU$ of subsets of $V(G)$ is \emph{$\delta$-disjoint} if all distinct sets $X,Y\in\calU$ satisfy $\min_{x\in X}\min_{y\in Y}d_G(x,y)>\delta$, and it is \emph{$(k,\delta)$-disjoint} if $\calU=\bigcup_{i=1}^k\calU_i$ where $\calU_i$ is $\delta$-disjoint for every $i\in[k]$.
A function $f\colon\setN\to\setN$ is a \emph{$d$-dimensional control function} for $G$ if for every $\delta\geq 0$, there is a $(d+1,\delta)$-disjoint partition $\calU$ of $V(G)$ into sets of diameter at most $f(\delta)$ in $G$.
The \emph{asymptotic dimension} of a graph class $\calG$ is the minimum $d$ such that there is a $d$-dimensional control function common for all graphs $G\in\calG$, or it is infinite if no such $d$ exists.

Bonamy et~al.~\cite{BBE+23} proved that every proper minor-closed graph class has asymptotic dimension at most $2$, and every minor-closed graph class excluding some planar graph has asymptotic dimension at most $1$.
Davies~\cite{Dav25} proved that string graphs have asymptotic dimension at most $2$.
Another hereditary graph class recently proved to have finite asymptotic dimension is the class of sphere intersection graphs in $\setR^d$, for every $d\geq 2$~\cite{DGHM25}.
We show a finite bound on the asymptotic dimension of region intersection graphs over any proper minor-closed graph class.

\begin{restatable}{theorem}{thmregionasdim}
\label{thm:region-asdim}
For every class\/ $\calH$ of graphs excluding\/ $K_t$ as a minor, the class of region intersection graphs over\/ $\calH$ has asymptotic dimension at most\/ $2^{t-2}-1$.
\end{restatable}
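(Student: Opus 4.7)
The plan is to proceed by induction on $t$, targeting the recurrence $d(t) \leq 2\,d(t-1) + 1$ with base case $d(2) = 0$, which solves to $d(t) = 2^{t-2} - 1$. The base case is immediate: a $K_2$-minor-free host graph is edgeless, so any region intersection graph over it is edgeless and has asymptotic dimension $0$.

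For the inductive step, let $G$ be a region intersection graph over a connected $K_t$-minor-free host $H$ with model $\mu$, and fix $\delta \geq 0$. I would pick a root $r \in V(H)$, BFS-layer $H$ from $r$, and group layers into slabs of thickness $N$, where $N$ is chosen much larger than the inductive diameter bound $f_{t-1}(\delta)$. The structural heart of the induction is a ``peeling'' lemma for $K_t$-minor-free graphs expressing that, after contracting the BFS-ball strictly below a given pair of consecutive slabs to a single apex vertex, the resulting augmented slab-plus-apex graph supports a region intersection model of the corresponding induced subgraph of $G$ that is hosted by a graph excluding $K_{t-1}$ as a minor---regions that extend below the slab are re-routed through the apex, while the contracted BFS-subtree furnishes the extra branch set that would otherwise upgrade any $K_{t-1}$-minor in the augmented host to a $K_t$-minor in $H$. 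By the inductive hypothesis, each augmented-slab piece then admits a $(d(t-1)+1,\delta)$-disjoint cover by sets of diameter at most $f_{t-1}(\delta)$. I would then split $V(G)$ by the parity of the smallest slab index that a region meets; by the choice of $N$, the inductive pieces supported on different slabs of the same parity are pairwise at $G$-distance $>\delta$, so the two parities together produce a $(2\,d(t-1)+2,\delta)$-disjoint cover of $V(G)$, matching $(d(t)+1,\delta)$.

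The step I anticipate as the main obstacle is reconciling $G$-distances with $H$-distances. A path in $G$ chains overlapping regions in ways invisible to $H$, so a sequence of ``fat'' regions straddling many slabs could a priori create short $G$-paths between pieces that are far apart in $H$. The apex-routing device in the peeling lemma is designed precisely to absorb such fat regions into the inductive pieces so that only thin, slab-confined regions can contribute intra-parity adjacencies across slabs; additionally, one must verify that the apex does not illegitimately shrink $G$-diameters inside each inductive piece, which should be handled by bookkeeping on which regions actually use the apex and by taking $N$ large enough to dominate $f_{t-1}(\delta)$. Pinning down the correct peeling lemma so as to produce a genuinely $K_{t-1}$-minor-free host, and quantifying $N$ in terms of $\delta$ and $f_{t-1}$, is the technical core of the argument.
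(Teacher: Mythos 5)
Your overall skeleton---induction on $t$ giving the doubling recurrence, BFS layering, slabs, a parity split, and using the ball below a slab as the extra branch set that forbids one more clique minor---is exactly the shape of the paper's argument (Lemma~\ref{lem:region-step}). However, the ``peeling lemma'' on which your induction rests is false as stated, and this is precisely the point you defer as ``the technical core.'' Contracting the BFS-ball below a slab to an apex does \emph{not} make the slab-plus-apex host $K_{t-1}$-minor-free: a $K_{t-1}$-minor sitting inside the slab upgrades to a $K_t$-minor of $H$ only if \emph{every} one of its branch sets is adjacent to the contracted ball, and there is no reason for that. Concretely, take $H$ to be $K_{t-1}$ with a long path attached and root the BFS at the far end of the path; the slab containing the clique plus the apex still contains $K_{t-1}$ as a minor (the clique itself), yet only one of its vertices touches the layers below, so no $K_t$-minor of $H$ results. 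So the inductive hypothesis ``host excludes $K_{t-1}$ as a minor'' cannot be recovered, and the induction does not close.

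The paper's fix is to strengthen the inductive invariant rather than the host: it tracks the non-existence of a $(\mu,S,\gamma,t)$-model, i.e.\ a $K_t$-minor model in $H$ in which each branch set is required to contain the image $\mu(N^\gamma[x_j])$ of a whole $\gamma$-ball (in $G$) around a vertex $x_j\in S$ lying in the middle of the current annulus. Because $\gamma$ exceeds the distance from those vertices to the bottom of the annulus, each such fattened branch set automatically reaches down along a shortest $G$-path and is adjacent to $\mu$ of the lower ball, which is exactly what makes the upgrade from a $(t-1)$-model to a $t$-model legitimate; the price is that the radius parameter grows from $\gamma$ to $\gamma+\delta$ at each level of the induction, yielding the diameter bound $2\gamma+1+(t-2)\delta$. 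Two further points where the paper diverges usefully from your plan: the layering is by $G$-distance from a vertex of $S$, not by $H$-distance from a root of $H$, which dissolves your anticipated obstacle about fat regions creating short $G$-paths across many $H$-layers (same-parity slabs are $\delta$-separated in $G$ by construction); and restricting $\mu$ to an annulus may disconnect regions, which the paper handles by splitting the bottom layer's regions into connected components and treating these as new host vertices. Without the strengthened ``fattened model'' invariant, or some equivalent device forcing all branch sets down to the lower ball, your argument has a genuine gap.
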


In view of \Cref{thm:region-asdim}, our next result is a further generalization of \Cref{thm:region-b-bounded}.

\begin{theorem}
\label{thm:asdim-b-bounded}
Every hereditary graph class with finite asymptotic dimension is Burling-controlled.
\end{theorem}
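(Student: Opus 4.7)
The plan is to combine the finite asymptotic dimension of $\calG$ with a separate induction on the Burling index $k$. Let $d$ be the asymptotic dimension of the hereditary class $\calG$ and $f$ a $d$-dimensional control function. Fix $\omega, k \in \setN$, and let $G \in \calG$ satisfy $\omega(G) \leq \omega$ and contain no induced $B_k$; the goal is to bound $\chi(G)$ by a function of $\omega$ and $k$ (with $d$ and $f$ treated as class constants).

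\emph{Step 1 (reducing to bounded-radius balls via asymptotic dimension).} Applying the definition of asymptotic dimension with $\delta = 1$ yields a $(d+1, 1)$-disjoint partition $\calU = \calU_1 \cup \cdots \cup \calU_{d+1}$ of $V(G)$ into sets of diameter at most $r := f(1)$ in $G$. Distinct sets in any single $\calU_i$ lie at pairwise distance greater than $1$, hence are pairwise non-adjacent, so $\chi\bigl(G[\bigcup \calU_i]\bigr) = \max_{U \in \calU_i}\chi(G[U])$. Every $U \in \calU$ is contained in the ball of radius $r$ around any of its vertices, so summing over $i$ gives
\[
\chi(G)\;\leq\;(d+1)\cdot\sup_{v \in V(G)}\chi\bigl(G\bigl[\set{u \colon d_G(u, v) \leq r}\bigr]\bigr).
\]
It therefore suffices to bound the chromatic number of $r$-balls in $B_k$-free, $\omega$-bounded members of $\calG$.

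\emph{Step 2 (bounding ball chromatic numbers by induction on $k$).} I would prove by induction on $k$ that there is $C(k, \omega) \in \setN$ with $\chi(H) \leq C(k, \omega)$ for every $H \in \calG$ with $\omega(H) \leq \omega$ and no induced $B_k$; combined with Step 1, this yields \Cref{thm:asdim-b-bounded}. The base case $k = 1$ is vacuous. For the inductive step, assume Burling-control for the $B_{k-1}$-free case. By Step 1, it is enough to bound $\chi$ of an $r$-ball $G[\set{u \colon d_G(u, v) \leq r}]$. Stratify the ball into distance layers $L_0 = \set{v}, L_1, \ldots, L_r$ from $v$, so that $\chi$ of the ball is at most $\sum_{i=0}^r \chi(G[L_i])$. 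Each $G[L_i]$ lies in $\calG$ (hereditariness) and has $\omega \leq \omega$. If every $G[L_i]$ is $B_{k-1}$-free, the inductive hypothesis bounds $\chi(G[L_i]) \leq C(k-1, \omega)$, so $\chi$ of the ball is at most $(r+1)C(k-1, \omega)$. Otherwise, some layer contains many induced copies of $B_{k-1}$; using iterated pigeonhole over common neighbors in $L_{i-1}$ and lower layers, these copies can be organized so that the Burling assembly operations $\Sub$, $\Ext$, and $\Dup$ can be executed on them with a fresh common neighbor, yielding an induced $B_k$ in $G$ and contradicting $B_k$-freeness. Quantifying both branches gives a finite $C(k, \omega)$.

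\emph{Step 3 (conclusion and main obstacle).} Combining the two steps, $\chi(G) \leq (d+1)\,C(k, \omega)$, a function of $\omega$ and $k$ only. Hence $\calG$ is Burling-controlled. The heart of the argument is the second branch of the induction in Step 2: assembling an induced $B_k$ from many induced $B_{k-1}$ copies found in the layers of a bounded-radius ball. The delicate point is extracting vertex-disjoint induced $B_{k-1}$ copies with precisely the external neighborhoods required so that the Burling step $B_{k-1} \mapsto B_k$ produces no spurious edges. The bounded radius obtained in Step 1 is exactly what makes iterated pigeonhole over common parents feasible, and hereditariness ensures that all intermediate subgraphs remain in $\calG$ with both the $\omega$-boundedness and $B_k$-freeness hypotheses preserved.
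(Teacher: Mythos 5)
Your Step 1 is correct and matches the paper's implicit starting point: a $d$-dimensional control function with $\delta=1$ bounds $\chi(G)$ by $(d+1)\max_v\chi(N_G^\rho[v])$ with $\rho=f(1)$, which is exactly the $\rho$-control used in the paper's \Cref{thm:asdim-2-controlled}. From there on, however, your route diverges from the paper's, and the divergence opens a genuine gap.

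The gap is in Step 2, and it is fatal as stated. You stratify an $r$-ball into distance layers and claim that if some layer $L_i$ is \emph{not} $B_{k-1}$-free, then one can assemble an induced $B_k$ inside $G$ by iterated pigeonhole over common neighbors in lower layers. But the existence of an induced $B_{k-1}$ in $G$ does not contradict $B_k$-freeness of $G$ at all: $(B_k)_{k\geq 1}$ is an increasing sequence, so $B_k$ itself is $B_{k+1}$-free and yet contains $B_{k-1}, B_{k-2}, \ldots$ as induced subgraphs in abundance. Your dichotomy (either every layer is $B_{k-1}$-free and you win by induction, or some layer has $B_{k-1}$ and you derive a contradiction) is therefore false, and the inductive scheme has no way to close. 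Moreover, even granting ``many'' vertex-disjoint induced copies of $B_{k-1}$ inside one layer, the Burling step $B_{k-1}\mapsto B_k$ demands a very precise adjacency pattern involving \emph{new} auxiliary vertices $b^a_v, c^a_v$ with prescribed neighborhoods both inside and outside each copy; no ``fresh common neighbor'' obtained by pigeonhole produces these. You flag exactly this obstacle (``precisely the external neighborhoods required so that the Burling step ... produces no spurious edges'') but offer no mechanism to overcome it, and it is precisely the hard part of the theorem.

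The paper's actual proof follows a substantially different route. From finite asymptotic dimension it extracts \emph{two} facts: $\rho$-control (as you do), but also, via a girth argument (\Cref{prop:asdim-subdivision}), that $\calG$ excludes induced $(\leq r)$-subdivisions of some fixed graph $F$, hence induced proper $(\leq\rho+2)$-subdivisions of $K_{t,t}$ for some $t$. The excluded-subdivision hypothesis is then fed into \Cref{lem:control-reduction} to upgrade $\rho$-control to $2$-control, and then into \Cref{lem:independent-multicover} to bound the chromatic number of the ``core'' of sufficiently long independent multicovers. The structural heart (\Cref{lem:main}, proved via \Cref{lem:tidy-base,lem:tidy-bound,lem:tidy-step,lem:final-bound}) does not glue together vertex-disjoint Burling copies at all; instead it iteratively extends $(\set{N_i},W)$-embeddings of the auxiliary sequence $(\burling{k})$ inside an $n$-tidy $s$-clique-multicover, using $2$-control to descend into $N^2$-neighborhoods at each step and using the independent/skew dichotomy together with ``small'' exceptional sets to kill exactly the spurious-edge problem you identified. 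Without the excluded-subdivision ingredient and the multicover machinery, your induction on $k$ has no base from which the ``otherwise'' branch can terminate, and the proof does not go through.
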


The most general form of our result involves so-called $\rho$-controlled classes of graphs---a concept originating from Scott~\cite{Sco97}.
For $\rho\geq 2$ and a function $f\colon\setN\to\setN$, a graph $G$ is \emph{$(f,\rho)$-controlled} if for every induced subgraph $H$ of $G$ and every $c\in\setN$, $\max_{v\in V(H)}\chi(N_H^\rho[v])\leq c$ implies $\chi(H)\leq f(c)$, where $N_H^\rho[v]=\set{x\in V(H)\colon d_H(v,x)\leq\rho}$.
A graph class $\calG$ is \emph{$\rho$-controlled} if there is a function $f\colon\setN\to\setN$ such that every graph in $\calG$ is $(f,\rho)$-controlled.
It follows from the definitions that every graph class $\calG$ with finite asymptotic dimension $d$ is $\rho$-controlled for some $\rho$; specifically, if $f'\colon\setN\to\setN$ is a $d$-dimensional control function for $\calG$, then the graphs in $\calG$ are $(c\mapsto(d+1)c,f'(1))$-controlled.

An \emph{$(\leq r)$-subdivision} of a graph $F$ is a subdivision of $F$ in which every edge of $F$ has been subdivided by at most $r$ vertices.
Since every hereditary graph class with finite asymptotic dimension excludes induced $(\leq r)$-subdivisions of some graph for some $r$, the following is a yet further generalization of \Cref{thm:asdim-b-bounded}.

\begin{theorem}
\label{thm:control-b-bounded}
For every\/ $\rho\geq 2$ and every graph\/ $F$, every hereditary graph class that is\/ $\rho$-controlled and excludes induced\/ $(\leq 2\rho+5)$-subdivisions of\/ $F$ is Burling-controlled.
\end{theorem}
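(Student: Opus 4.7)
I would prove the theorem by induction on $k$, establishing for every $k \geq 1$ and $\omega \in \setN$ an upper bound on $\chi(G)$ whenever $G$ lies in the class, $\omega(G) \leq \omega$, and $\beta(G) < k$. The base case $k = 1$ is trivial since $\beta(G) \geq 1$ for every nonempty $G$. For the inductive step I would argue by contrapositive: if such a $G$ has $\chi(G)$ exceeding a sufficiently large constant (depending on the inductive bound, on $\omega$, on $\rho$, on $|V(F)|$, and on the $\rho$-controlling function), then $G$ must contain an induced $(\leq 2\rho + 5)$-subdivision of $F$, contradicting the hypothesis.

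First, I would invoke $\rho$-control to locate a vertex $v$ whose closed $\rho$-ball $N_G^\rho[v]$ has huge chromatic number. Splitting $N_G^\rho[v]$ into BFS layers $L_0 = \{v\}, L_1, \dots, L_\rho$ at distance $i$ from $v$ costs only a factor of $\rho + 1$ in chromatic number, so some layer $L_i$ still has very large chromatic number. Applying the inductive hypothesis iteratively inside $L_i$---finding a copy of $B_{k-1}$, removing it together with an appropriate neighborhood, observing that the chromatic number of what remains stays large, and repeating---I would extract a large, well-separated collection of induced copies of $B_{k-1}$ inside $L_i$, each connected to $v$ by a path of length exactly $i \leq \rho$ through the BFS tree.

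With these $B_{k-1}$ copies in hand, the key assembly step uses the recursive description of the Burling sequence: the graph $B_k$ is built from a structured collection of disjoint copies of $B_{k-1}$ together with a tree of apex and connector vertices on top. My plan is to select a subcollection of the $B_{k-1}$ copies, combine them with $v$ and the short paths joining $v$ to each copy, and exhibit this union as an induced $B_k$ in $G$. The length bound $2\rho + 5$ appears naturally here: two BFS legs of length at most $\rho$ concatenate into paths of length at most $2\rho$, with a small additive slack absorbing the combinatorial template of the Burling construction.

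The main obstacle is ensuring that the assembled subgraph is truly \emph{induced}: no stray chords can be allowed between the chosen $B_{k-1}$ copies, between connecting paths, or from the apex region into the copies. I would handle this through a cleaning procedure---repeatedly restricting to a subfamily of still-large chromatic number in which a given type of chord is absent---using clique-bounded Ramsey-type reductions supported by $\omega(G) \leq \omega$. Whenever such a clean subfamily cannot be produced, the obstructing chords themselves should assemble, via $\rho$-control and one further application of the inductive hypothesis, into an induced $(\leq 2\rho + 5)$-subdivision of an arbitrarily large complete graph, hence of $F$. This clean-or-find-subdivision dichotomy is what I expect to be the technical heart of the argument, and where the precise subdivision budget of $2\rho + 5$ will need to be tracked most carefully.
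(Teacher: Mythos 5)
Your high-level dichotomy (either assemble an induced $B_k$ from copies of $B_{k-1}$, or turn the obstructions into an induced subdivision of $F$) has the right moral shape, but the central assembly step as you describe it cannot work, and this is not a detail. The graph $B_k$ is a \emph{fixed} graph with prescribed adjacencies: in the recursive construction the apex-level vertices are joined by \emph{edges} to specific vertex subsets of the $B_{k-1}$-copies (each new vertex $b^a_v$ is adjacent to all of $N_{B^a}(v)$, and $N_{B_k}(a)$ is complete to $\set{v,c^a_v}$). A vertex $v$ joined to the copies by BFS legs of length $i\geq 2$ yields a subdivided object, which is not an induced copy of $B_k$ and does not witness $\beta\geq k$. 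The ``small additive slack absorbing the combinatorial template'' does not exist: the induced-subgraph relation leaves no slack. This is precisely why the paper first replaces $(B_k)_{k\geq1}$ by the sequences $(B'_k)$ and $(\burling{k})$ (\Cref{sec:burling}), whose recursive structure (the $\Ext$/$\Dup$ operations: an apex adjacent to $N(a)\cup V_A$ of a copy) can actually be realized by an extraction argument, and why the extraction is organized around multicovers, in which the covering sets $N_i$ sit at distance exactly $1$ from the apexes $x_i$ and carry the marked vertices of the copies. The distance-$\rho$ information is spent once and for all in upgrading $\rho$-control to $2$-control, not in gluing Burling graphs together.

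Two further gaps. First, your induction is on $\beta$ alone with $\omega$ as a parameter, but every step of the extraction needs $\chi(N(v))$ to be bounded (for instance, to afford deleting copies ``together with an appropriate neighborhood''), and that bound comes from an induction on $\omega$, since $\omega(N(v))\leq\omega(G)-1$; you never set this up. Second, the ``clean-or-find-subdivision dichotomy'' you defer to is the entire technical content of the theorem: in the paper it is \Cref{lem:independent-multicover} (from Chudnovsky--Scott--Seymour) together with the tidy clique-multicover machinery of \Cref{sec:main}, and what it actually produces in the failure case is an induced $1$-subdivision of $K_{t,t}$ obtained from a long independent multicover, not a subdivision of a large complete graph; no Ramsey-type cleaning is known to yield the latter here. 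For comparison, the paper's own proof of \Cref{thm:control-b-bounded} is a two-line reduction: \Cref{lem:control-reduction} converts $\rho$-control into $2$-control using the excluded $(\leq 2\rho+5)$-subdivisions (this is the only place the constant $2\rho+5$ is used), after which \Cref{thm:b-bounded} applies directly.
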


This leads us to the following resuscitated form of Scott's conjecture.

\begin{conjecture}[Chudnovsky, Scott, and Seymour {\cite[Conjecture~2.1]{CSS21}}]
\label{conj:subdivision-b-bounded}
For every graph\/ $F$, the class of graphs excluding induced subdivisions of\/ $F$ is Burling-controlled.
\end{conjecture}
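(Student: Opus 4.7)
The plan is to reduce \Cref{conj:subdivision-b-bounded} to \Cref{thm:control-b-bounded}. Given a graph $F$ and an integer $k$, it suffices to bound $\chi(G)$ by a function of $\omega(G)$ whenever $G$ excludes both induced subdivisions of $F$ and $B_k$ as an induced subgraph. The two hypotheses of \Cref{thm:control-b-bounded} that must be installed (on an induced subgraph retaining most of the chromatic number) are \emph{(a)} $\rho$-control for some $\rho=\rho(F,k)$, and \emph{(b)} exclusion of induced $(\leq 2\rho+5)$-subdivisions of $F$, not merely of arbitrarily long induced subdivisions.

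First I would carry out the standard BFS-levelling reduction to a subgraph of bounded radius. Fixing a vertex $v$ and writing $L_i$ for the vertices at distance exactly $i$ from $v$ in $G$, pigeonhole gives $\chi(G)\leq 2\max_i\chi(G[L_i])$, so some $G[L_i]$ has chromatic number at least $\chi(G)/2$. Iterating inside $G[L_i]$ produces, for any prescribed radius $\rho$, an induced subgraph $H$ of $G$ of radius at most $\rho$ whose chromatic number is at least $\chi(G)$ divided by a function of $\rho$ alone. Such a bounded-radius $H$ is automatically $\rho$-controlled in the sense required by \Cref{thm:control-b-bounded}, since the local chromatic constraint at the central vertex already reaches all of $H$.

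The crux is then to show that this $H$ cannot contain any induced $(\leq 2\rho+5)$-subdivision of $F$. Since $G$ excludes \emph{all} induced subdivisions of $F$ by hypothesis, what remains is to argue that no induced subdivision of $F$ inside $H$ can have a branch path much longer than $2\rho+5$ without, in conjunction with the excluded $B_k$ and the enormous chromatic number of $H$, producing a contradiction. A plausible route is to show that if some branch path of an induced subdivision in $H$ were very long, then around it there would be enough private high-chromatic pockets (visible because $\chi(H)$ is enormous and the radius is bounded) to recursively reconstruct the triangle-free, tree-like, nested structure of the Burling graphs $B_k$ along the path, in the spirit of the levelling arguments of Chudnovsky, Scott, and Seymour. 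The result would be an induced $B_k$ in $H$, and hence in $G$, a contradiction.

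The genuine obstacle is precisely this last step, and it is not a routine estimate but the full content of the conjecture beyond \Cref{thm:control-b-bounded}: promoting ``no induced subdivision of $F$'' to ``no short induced subdivision of $F$'' inside $B_k$-free, high-chromatic graphs. No such general promotion is known, and this is morally the reason why Scott's original conjecture was so hard (and why the false version had to be replaced by \Cref{conj:subdivision-b-bounded}). A natural fallback strategy is to attack structured $F$ first---$F$ a forest (relativizing the Gyárfás--Sumner conjecture to $B_k$-free graphs), $F$ a cycle (building on the long induced cycle theorem of Chudnovsky, Scott, and Seymour), or $F=K_s$ (where Ramsey-type arguments on collections of branch paths plausibly force short induced subdivisions)---and then to seek a uniform framework covering the general case.
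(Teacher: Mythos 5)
This statement is an open conjecture, not a theorem of the paper: the paper offers no proof of \Cref{conj:subdivision-b-bounded}, and only observes that it would follow from \Cref{thm:control-b-bounded} together with the (also open) conjecture of Scott and Seymour that the class of graphs excluding induced subdivisions of $F$ is $2$-controlled for every $F$. Your proposal correctly identifies this reduction and honestly flags that the remaining step is the genuine open content, so as a survey of the difficulty it is accurate. But it is not a proof, and it should not be presented as a proof strategy that merely has one hard step left.

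Beyond that, there is a concrete error in your second paragraph. A bounded-radius induced subgraph $H$ is \emph{not} ``automatically $\rho$-controlled in the sense required by \Cref{thm:control-b-bounded}.'' The definition of $(f,\rho)$-controlled quantifies over \emph{all induced subgraphs} $H'$ of $H$, and distances in $H'$ are measured in $H'$ itself; deleting vertices can blow up distances arbitrarily, so induced subgraphs of a radius-$\rho$ graph need not have bounded radius. For a counterexample, take an Erd\H{o}s graph $G_0$ with girth greater than $2\rho+2$ and huge chromatic number, and add a dominating vertex: the result has radius $1$, yet its induced subgraph $G_0$ satisfies $\max_{v}\chi(N_{G_0}^\rho[v])\leq 2$ (every $\rho$-ball induces a tree) while $\chi(G_0)$ is unbounded. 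This is exactly why $\rho$-control is a nontrivial hypothesis and why the BFS-levelling reduction does not dispose of it for free; the entire difficulty of Scott's programme, and of the $2$-control conjecture of Scott and Seymour, lives in this step. So your plan has two unresolved gaps, not one: establishing control for these classes (which you cannot shortcut via bounded radius), and the subdivision-shortening step you already identified.
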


Chudnovsky, Scott, and Seymour~\cite{CSS21} suggested the above as a statement that one should attempt to refute rather than to prove, to emphasize the fact that Burling's construction is the only one known to disprove Scott's conjecture.
Scott and Seymour~\cite{SS20a} also conjectured that the class of graphs excluding induced subdivisions of $F$ is $2$-controlled for every $F$.
This along with \Cref{thm:control-b-bounded} would imply \Cref{conj:subdivision-b-bounded}.
Further cases in which we can confirm \Cref{conj:subdivision-b-bounded} and open problems in this direction are discussed in \Cref{sec:conclusion}.

A moral consequence of our Burling-control results is that the graphs $B_k$ from Burling's sequence should also be considered as ``canonical'' witnesses of large chromatic number, along with cliques.
To our knowledge, these are the first results showing that large chromatic number can be witnessed by bounded-size high-chromatic substructures other than cliques.

An algorithmic consequence of our Burling-control results (just like of all $\chi$-boundedness results) is that in string graphs and the other classes involved, the $(k,\ell)$-coloring problem can be solved in $n^{C(k)}$ time when $\ell\geq f(k,k)$ by exhaustively searching for a $(k+1)$-clique or an induced copy of $B_{k+1}$ and answering ``yes'' if and only if neither can be found.
One may be more interested in producing an actual coloring in polynomial time rather than in solving the decision problem.
Even though computational complexity has never been the focus in the research on $\chi$-boundedness, typical existential $\chi$-boundedness statements can be quite easily transformed into polynomial-time coloring algorithms.
This is also the case for the tools developed by Chudnovsky, Scott, and Seymour~\cite{CSS21} that we use, as well as for our results.
For the sake of simplicity, we do not discuss the running time directly in the statements, but we make sure to structure our proofs in a way that makes transforming them into polynomial-time coloring algorithms pretty straightforward.
We state and discuss such algorithmic versions of \Cref{thm:string-b-bounded,thm:region-b-bounded,thm:region-asdim,thm:asdim-b-bounded,thm:control-b-bounded} in \Cref{sec:effective}.

Burling's construction is itself used to define the (hereditary) class of \emph{Burling graphs}---induced subgraphs of graphs from the construction.
Burling graphs have various equivalent characterizations~\cite{PT23}, are recognizable in polynomial time~\cite{RW24}, and admit a polynomial-time maximum independent set algorithm~\cite{RW24}.
Since every Burling graph is a string graph, \Cref{thm:string-b-bounded} yields the following remarkable property, which was so far known to hold only for the class of complete graphs.

\begin{theorem}
\label{thm:burling-minimal}
The class\/ $\calB$ of Burling graphs is a minimal hereditary class of graphs with unbounded chromatic number (i.e., every hereditary class\/ $\calC\varsubsetneq\calB$ admits a finite bound on the chromatic number).
\end{theorem}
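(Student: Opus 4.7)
The plan is to deduce \Cref{thm:burling-minimal} directly from \Cref{thm:string-b-bounded}, using two elementary facts already recorded in the introduction: every Burling graph is a string graph, and every Burling graph is triangle-free (since each $B_k$ is triangle-free and $\calB$ is closed under induced subgraphs, so every $G\in\calB$ has $\omega(G)\leq 2$).

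Let $\calC\varsubsetneq\calB$ be a hereditary class. First I would fix some $H\in\calB\setminus\calC$, which exists by strict containment, and invoke the definition of $\calB$ to find an integer $k$ such that $H$ is an induced subgraph of $B_k$. Because $\calC$ is hereditary and $H\notin\calC$, no graph in $\calC$ contains $H$ as an induced subgraph; by transitivity of the induced subgraph relation, no graph in $\calC$ contains $B_k$ either. Hence every $G\in\calC$ is a triangle-free string graph that excludes $B_k$ as an induced subgraph.

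Next I would invoke the equivalent reformulation of Burling-control stated immediately before \Cref{thm:string-b-bounded}: for the chosen $k$, the class of string graphs that exclude $B_k$ is $\chi$-bounded by some function $f_k\colon\setN\to\setN$. Applied to any $G\in\calC$, whose clique number is at most~$2$, this gives $\chi(G)\leq f_k(2)$. Thus $\chi$ is bounded on $\calC$ by an absolute constant depending only on the choice of $H$, while $\calB$ itself has unbounded chromatic number because $\chi(B_k)=k$; together these establish minimality.

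All of the real work is packaged into \Cref{thm:string-b-bounded}, so the only genuine ``obstacle'' lies there; the deduction above is essentially formal. The conceptual observation I would emphasize is that in any hereditary class of triangle-free graphs that is Burling-controlled, removing even a single graph $H$ automatically forces the removal of some $B_k$, and the $\chi$-bound for $B_k$-free string graphs at clique number~$2$ collapses to a constant. Minimality of $\calB$ is thus an immediate structural corollary of Burling-control of string graphs combined with triangle-freeness of Burling graphs.
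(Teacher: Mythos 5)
Your proposal is correct and is exactly the argument the paper intends: the paper gives no separate proof of \Cref{thm:burling-minimal}, deriving it (as you do) from \Cref{thm:string-b-bounded} together with the facts that Burling graphs are triangle-free string graphs and that excluding any single $H\in\calB$ from a hereditary subclass forces the exclusion of some $B_k$. The deduction via the ``exclude $B_k$'' reformulation of Burling-control, yielding the constant bound $f_k(2)$, matches the paper's implicit reasoning.
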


\section{Overview}
\label{sec:overview}

Let $\setN=\set{1,2,\ldots}$ and $[n]=\set{1,\ldots,n}$ for all $n\in\setN$.
For a set $X$ and $k\geq 2$, let $\smash{\binom{X}{k}}$ denote the set of all $k$-element subsets of $X$.
Let $N_G(v)$ denote the set of neighbors of a vertex $v$ in a graph $G$, $N_G[v]=N_G(v)\cup\set{v}$, $N_G^2(v)=\set{x\in V(G)\colon d_G(v,x)=2}$, and $N_G^2[v]=N_G[v]\cup N_G^2(v)=\set{x\in V(G)\colon d_G(v,x)\leq 2}$.
The subscript $G$ is omitted if the graph is clear from the context.
A vertex $x$ (a set of vertices $X$) is \emph{complete} to a set of vertices $Y$ in $G$ if $G$ contains all edges between $x$ (every vertex in $X$) and every vertex in $Y$, and it is \emph{anticomplete} to $Y$ if $G$ contains no such edges.
A set of vertices $X$ \emph{covers} a set of vertices $Y$ in $G$ if every vertex in $Y$ has a neighbor in $X$.

\subsection{Burling graphs}

When defining an edge set of a graph, we use the following convenient notation: for two disjoint vertex sets $X$ and $Y$, we let $XY$ denote the set of edges $\set{xy\colon x\in X,\:y\in Y}$.

A \emph{marked graph} is a graph $G$ with a distinguished independent set of vertices denoted by $V_A(G)$.
For a marked graph $G$, we let $V_N(G)=V(G)\setminus V_A(G)$ and $E_N(G)=\set{uv\in E(G)\colon u,v\in V_N(G)}$.
A~marked graph $H$ is an \emph{induced marked subgraph} of $G$, which is denoted by $H\subseteq G$, if $H$ is an induced subgraph of $G$ and $V_A(H)=V_A(G)\cap V(H)$.
A sequence $(G_k)_{k\geq 1}$ of marked graphs is \emph{increasing} if $G_k\subseteq G_{k+1}$ for all $k\geq 1$.
We can always use a marked graph as an ordinary graph, just forgetting about the set $V_A(\cdot)$.

We provide a definition of Burling's sequence $(B_k)_{k\geq 1}$ as a sequence of marked graphs.
We have
\begin{alignat*}{3}
V(B_1)&=\set{a},\quad&V_A(B_1)&=\set{a},\quad&E(B_1)&=\emptyset{,}\\
V(B_2)&=\set{b,a},\quad&V_A(B_2)&=\set{a},\quad&E(B_2)&=\set{ba}{.}
\end{alignat*}
For $k\geq 2$, once $B_k$ has been defined, $B_{k+1}$ is defined as follows; see \cref{fig:burling-step}.
Let $A=V_A(B_k)$.
For each $a\in A$, let $B^a$ be an isomorphic copy of $B_k$ on a new vertex set, which means that the sets $V(B_k)$ and $V(B^a)$ for all $a\in A$ are pairwise disjoint.
For each $a\in A$ let $A^a=V_A(B^a)$.
For each $a\in A$ and each $v\in A^a$, let $b^a_v$ and $c^a_v$ be two new vertices.
We define
\begin{gather*}
V(B_{k+1})=V_N(B_k)\cup\bigcup_{a\in A}\Bigl(V(B^a)\cup\set{b^a_v,c^a_v\colon v\in A^a}\Bigr){,}\qquad V_A(B_{k+1})=\bigcup_{a\in A}\Bigl(A^a\cup\set{c^a_v\colon v\in A^a}\Bigr){,}\\
E(B_{k+1})=E_N(B_k)\cup\bigcup_{a\in A}\biggl(E(B^a)\cup\bigcup_{v\in A^a}\Bigl(N_{B^a}(v)\set{b^a_v}\cup\set{b^a_vc^a_v}\cup N_{B_k}(a)\set{v,c^a_v}\Bigr)\biggr){.}
\end{gather*}
Identifying every $a\in A$ with some vertex in $A^a$ makes $(B_k)_{k\geq 1}$ an increasing sequence.

\begin{proposition}[Burling \cite{Bur65}]
For every\/ $k\geq 1$, the graph\/ $B_k$ is triangle-free, and\/ $\chi(B_k)=k$.
\end{proposition}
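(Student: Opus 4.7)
The plan is to prove both claims by induction on $k$, with base cases $B_1$ and $B_2$ handled directly from the definitions. Each inductive step from $B_k$ to $B_{k+1}$ splits into three tasks: triangle-freeness of $B_{k+1}$, the upper bound $\chi(B_{k+1}) \leq k+1$, and the lower bound $\chi(B_{k+1}) \geq k+1$.

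Triangle-freeness of $B_{k+1}$ will follow from a short case analysis over the four vertex types---the old non-apex vertices $V_N(B_k)$, the copy vertices $V(B^a)$, and the new vertices $b^a_v$ and $c^a_v$. Listing the six edge bundles in the construction and applying the inductive hypothesis to $B_k$ and to each copy $B^a \cong B_k$ (so that $N_{B_k}(a)$ and $N_{B^a}(v)$ are independent sets), one verifies that every hypothetical triangle either collapses into a triangle inside $B_k$ or some $B^a$, or involves a pair of non-adjacent vertices.

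For the upper bound, $V_A(B_{k+1})$ is independent, so $\chi(B_{k+1}) \leq \chi(B_{k+1}[V_N(B_{k+1})]) + 1$. Inspecting the edge list, $B_{k+1}[V_N(B_{k+1})]$ is the disjoint union of $B_k[V_N(B_k)]$ together with, for each $a \in A$, a component on $V_N(B^a) \cup \{b^a_v : v \in A^a\}$ that is isomorphic to $B^a \cong B_k$ via the identity on $V_N(B^a)$ and the bijection $v \mapsto b^a_v$ on $A^a$ (both $v$ and $b^a_v$ have neighborhood $N_{B^a}(v)$ in $V_N(B^a)$). By induction each component is $k$-colorable, giving $\chi(B_{k+1}) \leq k+1$.

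The lower bound is the crux, and a naive induction on $\chi$ alone is too weak. My plan is to strengthen the hypothesis to $(\star)$: \emph{for every proper coloring $\psi$ of $B_k$, some $v \in V_A(B_k)$ satisfies $|\psi(N_{B_k}[v])| \geq k$}, which trivially implies $\chi(B_k) \geq k$. Assuming $(\star)$ for $B_k$, suppose for contradiction that $\psi \colon V(B_{k+1}) \to [k]$ is a proper coloring. For each $a \in A$ apply $(\star)$ to $\psi|_{V(B^a)}$ to find $v_a \in A^a$ with $\psi(N_{B^a}[v_a]) = [k]$; then the only non-forbidden color for $b^a_{v_a}$ is $\psi(v_a)$, forcing $\psi(b^a_{v_a}) = \psi(v_a)$ and hence $\psi(c^a_{v_a}) \neq \psi(v_a)$. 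Define $\psi'' \colon V(B_k) \to [k]$ by $\psi''|_{V_N(B_k)} = \psi|_{V_N(B_k)}$ and $\psi''(a) = \psi(c^a_{v_a})$ for $a \in A$; the edges $N_{B_k}(a)\{c^a_{v_a}\}$ in $B_{k+1}$ ensure $\psi''$ is proper on $B_k$. Reapplying $(\star)$ to $\psi''$ yields some $v \in A$ with $\psi''(N_{B_k}[v]) = [k]$, so $\psi(c^v_{v_v})$ is the unique color of $[k]$ missing from $\psi(N_{B_k}(v))$. But the edges $N_{B_k}(v)\{v_v\}$ in $B_{k+1}$ also force $\psi(v_v) \notin \psi(N_{B_k}(v))$, so $\psi(v_v) = \psi(c^v_{v_v})$, contradicting $\psi(c^a_{v_a}) \neq \psi(v_a)$ at $a = v$. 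The main obstacle is locating the right strengthening $(\star)$ and designing $\psi''$: the vertex $c^a_v$, wired by Burling's construction to $N_{B_k}(a)$, is engineered precisely to make $\psi''$ well-defined and keep $(\star)$ stable under the inductive step.
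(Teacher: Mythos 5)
The paper itself does not prove this proposition (it simply cites Burling), so there is no in-paper argument to compare against; your proposal is essentially the classical proof, and the triangle-freeness case analysis, the decomposition of $B_{k+1}[V_N(B_{k+1})]$ for the upper bound, and the invariant $(\star)$ with the recoloring $\psi''$ through the vertices $c^a_{v_a}$ are all the right ingredients. The genuine gap is that your induction on $(\star)$ does not close. Assuming $(\star)$ for $B_k$, you derive a contradiction from a proper coloring $\psi\colon V(B_{k+1})\to[k]$; that argument is internally correct, but it proves only $\chi(B_{k+1})\geq k+1$, \emph{not} $(\star)$ for $B_{k+1}$. The statement $(\star)$ for $B_{k+1}$ quantifies over proper colorings with arbitrarily many colors, and several of your deductions use the cap of $k$ colors essentially: $\psi(b^a_{v_a})=\psi(v_a)$ is forced only because $N_{B^a}(v_a)$ already carries every color except $\psi(v_a)$, and $\psi(v_v)=\psi(c^v_{v_v})$ is forced only because exactly one color of $[k]$ is missing from $\psi(N_{B_k}(v))$. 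Consequently, after the base case you obtain $\chi(B_3)\geq 3$, but you have not established $(\star)$ for $B_3$ and therefore cannot run your step to get $\chi(B_4)\geq 4$.

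The repair keeps all of your ideas but aims the contradiction at $(\star)$ for $B_{k+1}$ itself: let $\psi$ be an arbitrary proper coloring and assume every $w\in V_A(B_{k+1})$ satisfies $\size{\psi(N_{B_{k+1}}[w])}\leq k$. For the vertex $v_a$ given by $(\star_k)$ in $B^a$, the bound $\size{\psi(N_{B_{k+1}}[v_a])}\leq k$ together with $\size{\psi(N_{B^a}[v_a])}\geq k$ and the completeness of $N_{B_k}(a)$ to $v_a$ forces $\psi(N_{B_k}(a))\subseteq C_a$, where $C_a=\psi(N_{B^a}(v_a))$ has exactly $k-1$ colors; moreover $\psi(b^a_{v_a})\notin C_a$ and $\psi(c^a_{v_a})\notin\psi(N_{B_k}(a))\cup\set{\psi(b^a_{v_a})}$. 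Defining $\psi''$ exactly as you do and applying $(\star_k)$ yields $v\in A$ with $\size{\psi(N_{B_k}(v))\cup\set{\psi(c^v_{v_v})}}\geq k$, hence $\psi(N_{B_k}(v))=C_v$. Then the closed neighborhood of the \emph{apex} vertex $c^v_{v_v}$, namely $N_{B_k}(v)\cup\set{b^v_{v_v},c^v_{v_v}}$, receives at least $(k-1)+2=k+1$ colors, contradicting the assumption. This proves $(\star)$ for $B_{k+1}$, from which $\chi(B_{k+1})\geq k+1$ follows trivially, as you note.
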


A key insight behind our Burling-control results is that the same class of Burling graphs can be defined using a different increasing sequence $(B'_k)_{k\geq 1}$ of triangle-free marked graphs.
We let $B'_1=B_2$.
For $k\geq 1$, once $B'_k$ has been defined, we define $B'_{k+1}$ as follows; see \cref{fig:new-step}.
Let $A=V_A(B'_k)$.
For each $a\in A$, let $B^a$ be an isomorphic copy of $B'_k$ on a new vertex set, and let $b^a$ be a new vertex, so that the vertex sets $A$, $V_A(B^a)$ for $a\in A$, and $\set{b^a\colon a\in A}$ are pairwise disjoint.
We define
\begin{gather*}
V(B'_{k+1})=V(B'_k)\cup\bigcup_{a\in A}V(B^a)\cup\set{b^a\colon a\in A}{,}\qquad
V_A(B'_{k+1})=A\cup\bigcup_{a\in A}V_A(B^a){,}\\
E(B'_{k+1})=E(B'_k)\cup\bigcup_{a\in A}\Bigl(E(B^a)\cup N_{B'_k}(a)\set{b^a}\cup\set{b^a}V_A(B^a)\Bigr){.}
\end{gather*}

\begin{figure}
  \begin{subfigure}{0.48\textwidth}
    \centering
    \includegraphics[page=1,scale=0.5]{./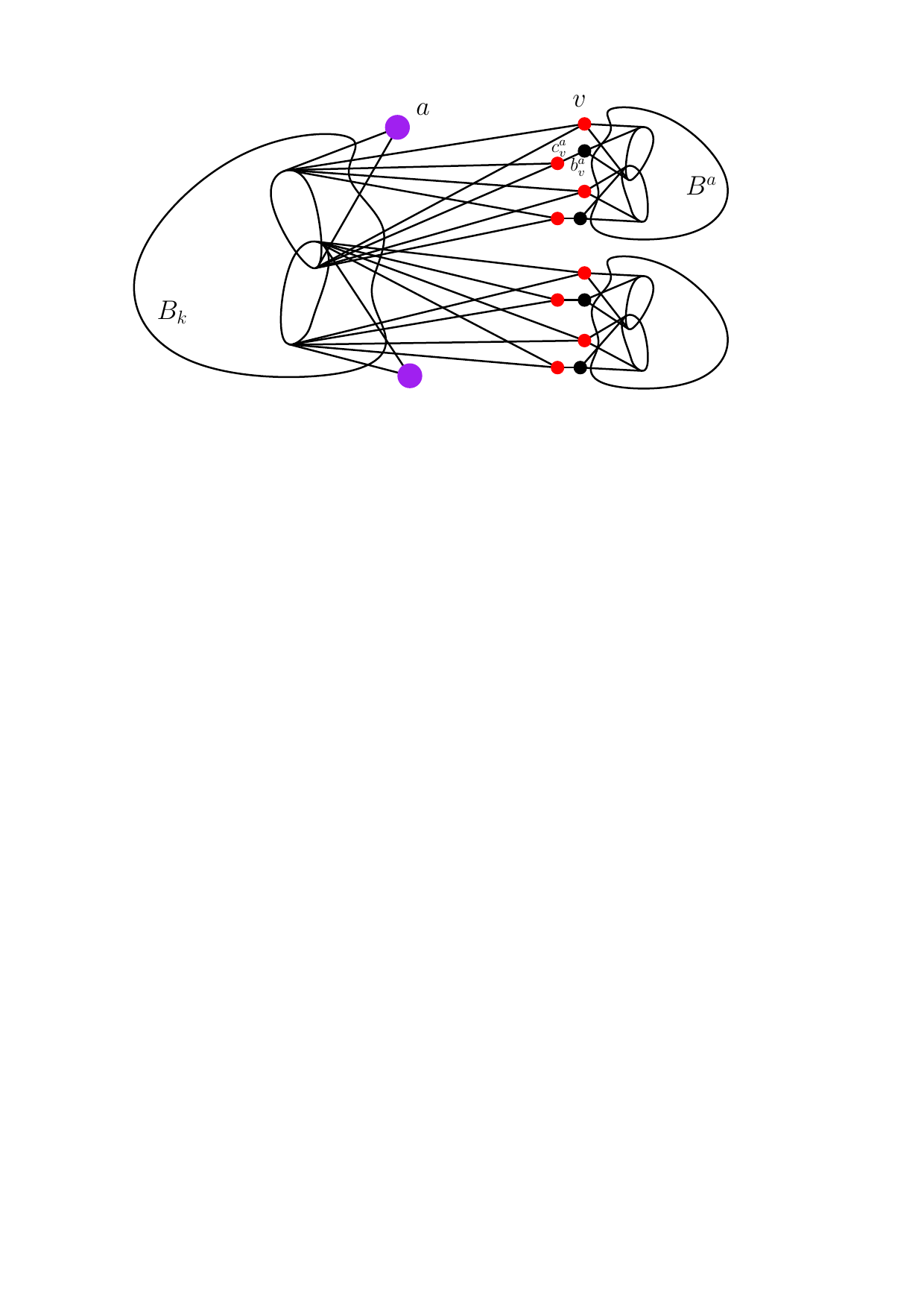}
    \caption{$B_{k+1}$}
    \label{fig:burling-step}
  \end{subfigure}
  \begin{subfigure}{0.48\textwidth}
    \centering
    \includegraphics[page=2,scale=0.5]{./figures.pdf}
    \caption{$B'_{k+1}$}
    \label{fig:new-step}
  \end{subfigure}
  \caption{
    Schematic depiction of the inductive construction of $B_{k+1}$ (\cref{fig:burling-step}) and $B'_{k+1}$ (\cref{fig:new-step}).
    The vertices in $V_A(B_{k+1})$ and $V_A(B'_{k+1})$ are depicted in red.
    Purple vertices are the ones in $V_A(B_k)$ that are not vertices of $B_{k+1}$.
  }
\end{figure}

The following lemma, which we prove in \Cref{sec:burling}, essentially asserts that the sequences $(B_k)_{k\geq 1}$ and $(B'_k)_{k\geq 1}$ define the same classes of induced subgraphs.

\begin{restatable}{lemma}{lemburling}
\label{lem:burling}
For every\/ $k\geq 1$, the following holds:
\begin{enumeratei}
\item\label{item:burling-1} $B'_k$ is isomorphic to an induced subgraph of\/ $B_{2k}$;
\item\label{item:burling-2} $B_{k+1}$ is isomorphic to an induced subgraph of\/ $B'_{3\cdot 2^{k-1}-2}$.
\end{enumeratei}
\end{restatable}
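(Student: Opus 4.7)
The plan is to prove both parts by induction on $k$.

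For \cref{item:burling-1}, the base case $k=1$ holds since $B'_1 = B_2$. For the inductive step, the idea is to exploit the fact that the construction of $B_{2k+2}$ from $B_{2k}$ consists of two consecutive $B$-steps, and to arrange these so that together they simulate one $B'$-step. Specifically, the first $B$-step from $B_{2k}$ to $B_{2k+1}$ attaches, for each $a \in V_A(B_{2k})$, a fresh copy of $B_{2k}$; by the inductive hypothesis, that copy contains an induced copy of $B'_k$ (with its marked vertices sitting inside $V_A(B_{2k})$), which will serve as the attached $B^a \cong B'_k$ in the construction of $B'_{k+1}$. The first $B$-step also creates the ``duplicate'' marked vertex $c^a_a$, whose neighborhood in $B_{2k+1}$ is $\set{b^a_a}\cup N_{B_{2k}}(a)$. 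The second $B$-step from $B_{2k+1}$ to $B_{2k+2}$ then attaches new auxiliary vertices around $c^a_a$; one of these auxiliary vertices can be chosen to play the role of $b^a$ in $B'_{k+1}$, since it can be arranged to have exactly the required neighborhood $N_{B'_k}(a) \cup V_A(B^a)$ inside the target embedding. To make the induction go through, I would strengthen the inductive hypothesis to say that $B'_k$ embeds in $B_{2k}$ as an induced \emph{marked} subgraph, so that marked vertices of $B'_k$ coincide with marked vertices of $B_{2k}$ and the next $B$-steps act on them as expected.

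For \cref{item:burling-2}, the base case $k=1$ holds since $B'_1 = B_2$. Setting $f(k) = 3 \cdot 2^{k-1} - 2$, one checks that $f$ satisfies the recurrence $f(k+1) = 2 f(k) + 2$ with $f(1) = 1$, so the plan is an induction where, given $B_{k+1} \subseteq B'_{f(k)}$, one constructs $B_{k+2}$ inside $B'_{f(k+1)}$ using $f(k) + 2$ further $B'$-steps. The guiding observation is that a single $B$-step attaches, to each marked vertex $a$, two layers of structure: a copy $B^a \cong B_k$ and a collection of duplicate vertices $\set{c^a_v\colon v\in V_A(B^a)}$; whereas a single $B'$-step only attaches one new (highly-connected) vertex $b^a$ per marked vertex. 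Simulating one $B$-step by $B'$-steps should therefore split the $f(k)+2$ new $B'$-steps into two halves of length $\sim f(k)$: the first half builds the copy $B^a$ within a nested $B'$-structure, and the second half builds the duplicate layer (with the $c^a_v$ playing the role of particular $b$-type vertices of the nested $B'$-construction) together with the needed edges.

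The main obstacle in both parts is verifying induced-ness. The ambient graphs $B_{2k+2}$ and $B'_{f(k+1)}$ contain many additional vertices and edges beyond those of the target embedding, so careful tracking of adjacencies is required to check that the chosen vertices exhibit exactly the required edges and no spurious ones. I expect \cref{item:burling-2} to be the harder direction: because $f(k)$ grows exponentially, the $B'$-graphs in question become deeply nested, and explicitly specifying the embedding of $B_{k+2}$ vertex by vertex (and then verifying induced-ness across the many layers introduced by the $B'$-steps) will be substantially more intricate than the two-step analysis needed for \cref{item:burling-1}.
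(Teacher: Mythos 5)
Your induction scheme and the arithmetic (the recurrence $f(k+1)=2f(k)+2$ for \cref{item:burling-2}, two $B$-steps per $B'$-step for \cref{item:burling-1}) match the paper, and strengthening the hypothesis to an induced \emph{marked} subgraph embedding is indeed necessary. However, the concrete role assignment you propose for \cref{item:burling-1} fails. You place the base copy of $B'_k$ in the original $B_{2k}$, the attached copy $\tilde B^a\cong B'_k$ inside the copy of $B_{2k}$ glued onto $a$ in the first $B$-step (with $V_A(\tilde B^a)\subseteq A^a$), and you want $b^a$ to be an auxiliary vertex created around $c^a_a$ in the second $B$-step. But the entire gadget attached to $c^a_a$ in the second step meets the rest of $B_{2k+2}$ only through $N_{B_{2k+1}}(c^a_a)=\set{b^a_a}\cup N_{B_{2k}}(a)$, which is disjoint from the first-step copy $B^a$; so every vertex of that gadget is anticomplete to $V(B^a)\supseteq V_A(\tilde B^a)$, and none can have the required neighborhood $N_{B'_k}(a)\cup V_A(\tilde B^a)$. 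More generally, no vertex of $B_{2k+2}$ is complete to both a subset of $N_{B_{2k}}(a)$ in the original copy and to the marked vertices of a marked copy of $B'_k$ sitting inside the first-step copy $B^a$. The paper's embedding is arranged differently: the \emph{base} copy of $B'_k$ lives inside a first-step attached copy $B^{a_0}$, the vertex $b^{a_0}_v$ created in the \emph{first} step (adjacent to $N_{B^{a_0}}(v)$ and to $c^{a_0}_v$) plays the role of $b^a$, and the attached copy $\tilde B^a$ lives inside the gadget substituted for the marked vertex $c^{a_0}_v$ in the \emph{second} step, whose marked vertices all become adjacent to $N(c^{a_0}_v)\ni b^{a_0}_v$. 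This is exactly the content of the identities $B_{k+1}=\Sub(B_k,\Ext(B_k,B_1))$ and $\Ext(G,H)\subseteq\Sub(\Ext(G,B_1),H)$ that the paper isolates.

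For \cref{item:burling-2} your sketch identifies the right count of extra $B'$-levels but gives no mechanism for producing the duplicate layer $\set{c^a_v}$ with the correct adjacencies; ``the second half builds the duplicate layer together with the needed edges'' is precisely the part that needs an idea. The paper's idea is to introduce the operation $\Sub^*$ (substitution that keeps the old marked vertices) and prove the distributive law $\Sub^*(\Ext(G,H),K)\subseteq\Ext(\Sub^*(G,K),\Sub^*(H,K))$, which by induction on $m$ yields $\Sub^*(B'_m,B'_n)\subseteq B'_{m+n+1}$; combined with $B_{k+1}=\Sub(B_k,\Ext(B_k,B_1))\subseteq\Sub^*(B_k,\Ext(B_k,B_1))$ and the inductive hypothesis this gives the bound $3\cdot 2^{k-1}-2$ without ever tracking individual vertices across the deeply nested levels. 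I would recommend reworking both parts through such operation identities rather than attempting a direct vertex-by-vertex embedding.
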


\subsection{Excluded subdivisions and control}

A \emph{proper subdivision} of a graph $F$ is a subdivision in which every edge of $F$ has been subdivided at least once.
The following observations are easy.

\begin{proposition}[Sinden {\cite[Section IV]{Sin66}}]
\label{prop:string-subdivision}
String graphs exclude induced proper subdivisions of non-planar graphs.
\end{proposition}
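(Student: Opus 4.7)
The plan is to argue contrapositively: if a string graph $G$ contains an induced proper subdivision of a graph $F$, then $F$ must be planar. Fix a string representation $\phi$ of $G$ sending each vertex to a simple curve in $\setR^2$, and let $F'\subseteq G$ be an induced proper subdivision of $F$. Write $B\subseteq V(F')$ for the branch vertices (corresponding to vertices of $F$), and for each edge $e=uv$ of $F$ let $w^e_1,\ldots,w^e_{k_e}$ (with $k_e\geq 1$) be the subdivision vertices along $e$, placing $w^e_0=u$ and $w^e_{k_e+1}=v$.

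The first step is to build a drawing of $F$ in the plane from $\phi$. For each branch vertex $u$, pick a point $p_u$ on the curve $\phi(u)$, chosen generically so that $p_u$ avoids every other curve $\phi(x)$. For each edge $e=uv$ of $F$, and for each $i\in\set{0,\ldots,k_e}$, the consecutive subdivision vertices $w^e_i$ and $w^e_{i+1}$ are adjacent in $F'$, so we can pick a point $q^e_i\in\phi(w^e_i)\cap\phi(w^e_{i+1})$. Concatenating a subcurve of $\phi(w^e_0)$ from $p_u$ to $q^e_0$, subcurves of $\phi(w^e_i)$ from $q^e_{i-1}$ to $q^e_i$ for $1\leq i\leq k_e$, and a subcurve of $\phi(w^e_{k_e+1})$ from $q^e_{k_e}$ to $p_v$ yields a curve $\gamma_e$ from $p_u$ to $p_v$ in the plane.

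The key step is showing that, after a small local perturbation near each branch point $p_u$, the family $(\gamma_e)_{e\in E(F)}$ meets only at common endpoints, and therefore gives a planar embedding of $F$. There are three sources of potential crossings. First, every subdivision vertex $w$ of $F'$ lies on exactly one subdivided edge $e$ of $F$, so $\phi(w)$ is used only in the construction of $\gamma_e$. Second, for distinct edges $e\neq e'$ of $F$ and any subdivision vertices $w^e_i,w^{e'}_j$, the vertices $w^e_i$ and $w^{e'}_j$ are non-adjacent in the induced subgraph $F'$ of $G$ (this is where properness is crucial, ensuring that adjacent vertices inside $F'$ always lie in the same subdivided edge), hence $\phi(w^e_i)\cap\phi(w^{e'}_j)=\emptyset$; similarly, branch vertex curves $\phi(u)$ are disjoint from any subdivision vertex curve $\phi(w^e_i)$ unless $u$ is an endpoint of $e$ and $i\in\set{1,k_e}$. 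Thus the interiors of $\gamma_e$ and $\gamma_{e'}$, once they leave the small neighborhoods of their branch endpoints, never meet. Third, near a single branch vertex point $p_u$, several edge-curves share initial subcurves of $\phi(u)$; this overlap is resolved by a standard thickening argument, replacing each shared initial segment with a parallel offset inside a thin tubular neighborhood of $\phi(u)$, so that the incident edge-curves emanate from $p_u$ at distinct angles and run in disjoint parallel strands until they diverge at their respective attachment points on $\phi(u)$.

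The technical heart of the argument, and the main obstacle, is this last local perturbation: one must choose the attachment points on $\phi(u)$ to be distinct and arrange the parallel offsets so that the cyclic order of outgoing directions at $p_u$ is consistent and no offset strand strays into the neighborhood of another curve. Since $\phi(u)$ is a simple curve disjoint from $\phi(x)$ for every $x$ non-adjacent to $u$, a sufficiently thin tubular neighborhood suffices. Once this is carried out, the curves $(\gamma_e)$ form a planar drawing of $F$, contradicting the assumption that $F$ is non-planar.
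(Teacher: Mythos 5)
The paper does not actually prove this proposition---it is quoted from Sinden and labelled ``easy''---so I am judging your argument on its own terms. Your overall strategy (build a drawing of $F$ by concatenating sub-arcs of the strings and contracting each branch curve to a point) is the right one, and the two disjointness facts you extract from inducedness and properness are exactly the ones needed: curves of subdivision vertices belonging to distinct edges of $F$ are disjoint, and branch curves are pairwise disjoint and meet only the curves of their neighbours $w^e_1$, $w^e_{k_e}$.

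The gap is in the step you yourself call the technical heart. You justify the local routing near $p_u$ by noting that $\phi(u)$ is disjoint from $\phi(x)$ for every $x$ non-adjacent to $u$, so that ``a sufficiently thin tubular neighborhood suffices.'' But the non-adjacent curves were never the obstruction: the curves that invade \emph{every} tubular neighborhood of $\phi(u)$ are the curves $\phi(w^{e'}_1)$ of the vertices adjacent to $u$, precisely because they cross $\phi(u)$. The used sub-arc of $\phi(w^{e'}_1)$ may cross $\phi(u)$ transversally at interior points lying between $p_u$ and the attachment point $q^{e}_0$ of a different edge $e$; such a crossing arc, running from boundary to boundary of the thin neighborhood, separates it into two sides, so the ``parallel offset strand'' for $\gamma_e$ cannot reach $q^e_0$ without crossing $\gamma_{e'}$. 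No amount of thinness fixes this, and it is not shown that the available freedom (choice of $p_u$, of the attachment points, of the sub-arcs) always avoids it. Two clean repairs: (a) a quotient argument---the arcs $\phi(u)$ for branch vertices $u$ are pairwise disjoint compact connected non-separating subsets of the sphere, so collapsing each to a point yields a sphere again (Moore's theorem), and the images of the interior portions of the $\gamma_e$, which are pairwise disjoint by your second observation, contain the required simple arcs; or (b) the combinatorial route the paper uses for its generalization to region intersection graphs: pass to the plane graph of the curve arrangement, note that it gives a region intersection model of $F'$ over a planar graph, extract a minor model of $F$ in that planar graph, and invoke minor-closedness of planarity. Either repair preserves your structure; only the local perturbation step needs replacing.
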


\begin{proposition}[Lee {\cite[Lemma 1.5 of the full version]{Lee17}}]
For every proper minor-closed graph class\/ $\calH$, region intersection graphs over\/ $\calH$ exclude induced proper subdivisions of graphs not in\/~$\calH$.
\end{proposition}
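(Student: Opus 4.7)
The plan is to establish the contrapositive: if $G$ has a region intersection model $\mu$ over some graph $H\in\calH$ and $G$ contains an induced proper subdivision $G'$ of $F$, then $F$ is a minor of $H$, and hence $F\in\calH$ because $\calH$ is minor-closed. Let $v_1,\ldots,v_n$ be the branch vertices of $G'$ corresponding to the vertices $w_1,\ldots,w_n$ of $F$, and for each edge $e=w_iw_j\in E(F)$ let $v_i=p_0^e,p_1^e,\ldots,p_{\ell_e}^e,p_{\ell_e+1}^e=v_j$ be the associated subdivision path in $G'$, with $\ell_e\geq 1$.

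The first step is to extract three disjointness properties from the assumption that $G'$ is an induced subgraph of $G$: \emph{(a)} $\mu(v_i)\cap\mu(v_j)=\emptyset$ for $i\neq j$, because in a proper subdivision any two branch vertices are non-adjacent in $G'$ and hence in $G$; \emph{(b)} for distinct edges $e,e'$ of $F$, the ``corridors'' $C_e=\mu(p_1^e)\cup\cdots\cup\mu(p_{\ell_e}^e)$ and $C_{e'}$ are vertex-disjoint, because subdivision vertices lying on distinct subdivision paths are never adjacent in $G'$, hence not in $G$; and \emph{(c)} $\mu(v_k)\cap C_e=\emptyset$ whenever $v_k$ is not an endpoint of $e$, for the same reason. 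Each $C_e$ is itself connected, because consecutive path regions intersect, and it meets both $\mu(v_i)$ and $\mu(v_j)$ through $\mu(p_1^e)$ and $\mu(p_{\ell_e}^e)$ respectively.

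To assemble the minor, for each edge $e=w_iw_j\in E(F)$ I take a shortest path $\pi_e$ in $H[\mu(v_i)\cup C_e\cup\mu(v_j)]$ from $\mu(v_i)$ to $\mu(v_j)$. By minimality the internal vertices of $\pi_e$ lie inside $C_e\setminus(\mu(v_i)\cup\mu(v_j))$, so $\pi_e$ has the form $x_i^e,y_1,\ldots,y_m,x_j^e$ with $x_i^e\in\mu(v_i)$, $x_j^e\in\mu(v_j)$, and the $y$'s inside the corridor interior (possibly $m=0$). I split $\pi_e$ roughly in the middle, keeping one edge of $\pi_e$ as a ``bridge'' and earmarking the $v_i$-side vertices for $w_i$ and the $v_j$-side vertices for $w_j$. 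I then define $X_{w_i}=\mu(v_i)$ together with all vertices earmarked for $w_i$ across the edges $e$ incident to $w_i$. Each $X_{w_i}$ is connected, since it is $\mu(v_i)$ with several paths attached at vertices of $\mu(v_i)$; pairwise disjointness of the $X_{w_i}$ follows directly from \emph{(a)}--\emph{(c)} combined with the fact that the $w_i$-side and $w_j$-side of each $\pi_e$ are disjoint by construction; and the bridge edge of $\pi_e$ supplies the required edge of $H$ between $X_{w_i}$ and $X_{w_j}$ for every $w_iw_j\in E(F)$.

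The point that demands the most care---rather than a genuine obstacle---is that the corridor $C_e$ genuinely overlaps both $\mu(v_i)$ and $\mu(v_j)$ at its ends, namely at $\mu(p_1^e)\cap\mu(v_i)$ and $\mu(p_{\ell_e}^e)\cap\mu(v_j)$. This is precisely why $\pi_e$ is chosen to be a shortest path in $H[\mu(v_i)\cup C_e\cup\mu(v_j)]$: minimality pushes those unavoidable overlaps to the two endpoints of $\pi_e$ only, so that the interior of $\pi_e$ lies strictly inside $C_e\setminus(\mu(v_i)\cup\mu(v_j))$, and the in-the-middle split then produces branch sets $X_{w_i},X_{w_j}$ that are genuinely vertex-disjoint.
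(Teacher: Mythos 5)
Your argument is correct and complete. Note that the paper does not actually prove this proposition: it is stated as an easy observation and attributed to Lee \cite[Lemma 1.5 of the full version]{Lee17}, so there is no in-paper proof to compare against. Your proof is the natural one and all the steps check out: induced-ness of the subdivision yields exactly the disjointness properties \emph{(a)}--\emph{(c)}; each corridor $C_e$ is connected and touches $\mu(v_i)$ and $\mu(v_j)$; the shortest-path choice of $\pi_e$ forces its interior into $C_e\setminus(\mu(v_i)\cup\mu(v_j))$, so the two halves of $\pi_e$ attach cleanly to $\mu(v_i)$ and $\mu(v_j)$; and the resulting branch sets $X_{w_i}$ are connected, pairwise disjoint, and joined by the bridge edges, giving an $F$-minor model in $H$. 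Since $H\in\calH$ and $\calH$ is minor-closed, $F\in\calH$, which is the contrapositive of the statement. The only degenerate case, $m=0$, where the bridge is the single edge $x_i^ex_j^e$ and nothing is earmarked beyond $\mu(v_i)$ and $\mu(v_j)$, is handled by your construction as written.
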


We show an analogous statement for graph classes with finite asymptotic dimension and $(\leq r)$-subdivisions; see \Cref{sec:asdim} for the proof.

\begin{restatable}{proposition}{propasdimsubdivision}
\label{prop:asdim-subdivision}
For every hereditary graph class\/ $\calG$ with finite asymptotic dimension and every\/ $r\geq 1$, there is a graph\/ $F$ such that the graphs in\/ $\calG$ exclude induced\/ $(\leq r)$-subdivisions of\/ $F$.
\end{restatable}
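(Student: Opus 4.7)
The plan is to exploit the hereditariness of $\calG$: any induced $(\leq r)$-subdivision $H$ of some $F$ appearing in a graph $G\in\calG$ must itself lie in $\calG$, so by the assumption on asymptotic dimension $H$ would be forced to admit a ``thin'' partition of its vertex set. It therefore suffices to exhibit a single graph $F$ such that no $(\leq r)$-subdivision of $F$ admits such a partition. Concretely, let $d$ be the asymptotic dimension of $\calG$, let $f$ be a $d$-dimensional control function for $\calG$, and set $M:=f(r+1)$; the task reduces to finding one $F$ for which no $(\leq r)$-subdivision $H$ of $F$ admits a $(d+1,r+1)$-disjoint partition of $V(H)$ into sets of diameter at most $M$ in $H$.

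For the construction I would take $F$ to be the $(d+1)$-dimensional grid $[N]^{d+1}$ with $N$ chosen large in terms of $d$, $r$, and $M$. The essential input is the classical fact that $\setZ^{d+1}$ has asymptotic dimension exactly $d+1$, whose finitary form asserts that for every $\delta$ and $M'$, the grid $[N]^{d+1}$ admits no $(d+1,\delta)$-disjoint partition of its vertex set into sets of diameter at most $M'$ once $N$ is sufficiently large. Applied with $\delta=1$ and $M'=M$, this gives the $N$ we need.

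To transfer this obstruction from $F$ itself to an arbitrary $(\leq r)$-subdivision $H$ of $F$, I would restrict a hypothetical $(d+1,r+1)$-disjoint partition of $V(H)$ of diameter $\leq M$ to the set of branch vertices $V(F)\subseteq V(H)$. The key observation is the distance comparison $d_F(u,v)\leq d_H(u,v)\leq (r+1)\,d_F(u,v)$ for all $u,v\in V(F)$, which is immediate from the definition of an $(\leq r)$-subdivision (a shortest $F$-path has length at most $d_H(u,v)$, and each $F$-edge on a shortest $F$-path becomes a subdivision path of length at most $r+1$ in $H$). Using this, the restricted partition consists of parts of diameter at most $M$ in $F$, and any two distinct parts of the same color, being at distance $>r+1$ in $H$, must be at distance $>1$ in $F$. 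So the restriction is a $(d+1,1)$-disjoint partition of $V(F)$ into sets of diameter at most $M$ in $F$, contradicting the choice of $N$ and finishing the proof.

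The main obstacle will be invoking or reproving the lower bound on the asymptotic dimension of the grid $[N]^{d+1}$: this is the step where the ``$(d{+}1)$-dimensionality'' of the chosen $F$ is actually used, and everything else amounts to bookkeeping of the distance inequality between $H$ and $F$. An alternative would be to replace the grid by any explicit sequence of graphs known to have unbounded asymptotic dimension (e.g.\ expanders), but the grid has the advantage that the required quantitative statement is standard and entirely combinatorial.
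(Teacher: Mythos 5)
Your overall reduction is sound and is in fact the same one the paper uses: restrict a hypothetical $(d+1,r+1)$-disjoint, diameter-$\leq f(r+1)$ partition of the subdivision $H$ to the branch vertices, use $d_F(u,v)\leq d_H(u,v)\leq(r+1)d_F(u,v)$, and conclude that $F$ itself would admit a $(d+1,1)$-disjoint partition into sets of diameter at most $M=f(r+1)$. The task is then to pick one $F$ for which no such partition exists. The gap is in your choice of $F$: the grid $[N]^{d+1}$ \emph{does} admit such partitions, for every $N$ and even with only $2$ colour classes and parts of diameter $0$. Indeed, the grid graph is bipartite, so the partition of $V([N]^{d+1})$ into singletons, grouped into two families according to the parity $2$-colouring, is a $(2,1)$-disjoint partition into sets of diameter $0$; padding with empty families makes it $(d+1,1)$-disjoint. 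The finitary lower bound you invoke for $\operatorname{asdim}(\setR^{d+1})=d+1$ is genuinely about \emph{some} sufficiently large $\delta$ (in the graph/$\ell_1$ metric the required $\delta$ grows with the dimension, which is exactly why checkerboard-type decompositions beat the Lebesgue obstruction at $\delta=1$), whereas your distance transfer from $H$ to $F$ only delivers $\delta=1$. So the contradiction never materialises.

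The condition you actually need from $F$ is: for every partition of $V(F)$ into parts of diameter at most $M$, the quotient graph (parts adjacent if they contain adjacent vertices) is not $(d+1)$-colourable. The paper achieves this with Erd\H{o}s-type graphs rather than grids: take $F$ with $\chi(F)>2d+2$ and girth larger than $2f(r+1)+1$. Then every part of diameter at most $f(r+1)$ induces a tree (hence is $2$-colourable), and a $(d+1,1)$-disjoint partition would make each of the $d+1$ families induce a forest, giving $\chi(F)\leq 2(d+1)$, a contradiction. If you replace your grid by such a graph, the rest of your argument goes through essentially verbatim.
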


Scott~\cite{Sco97}, in his proof that the class of graphs excluding induced subdivisions of $F$ is $\chi$-bounded for every forest $F$, pioneered a two-step approach: first prove that the class is $\rho$-controlled for some (possibly very large) constant $\rho$, and then use this fact to prove $\chi$-boundedness.
This approach was successfully applied in numerous $\chi$-boundedness proofs \cite{CSS16,CSS17,CSS19,CSS21,CSSS20,Dav22,Sco98,SS18,SS19,SS20a,SS20b}.
Since a proper subdivision of $K_{t,t}$ with sufficiently large $t$ contains an induced subdivision of every graph, the following lemma asserts that being $\rho$-controlled for some $\rho\geq 2$ is equivalent to being $2$-controlled for classes of graphs with excluded induced subdivisions of a fixed graph.

\begin{lemma}[Chudnovsky, Scott, Seymour {\cite[1.10]{CSS21}}]
\label{lem:control-reduction}
For all\/ $t\geq 1$, $\rho\geq 3$, and\/ $f\colon\setN\to\setN$, there is\/ $f'\colon\setN\to\setN$ such that every\/ $(f,\rho)$-controlled graph excluding induced proper\/ $(\leq\rho+2)$-subdivisions of\/ $K_{t,t}$ is\/ $(f',2)$-controlled.
\end{lemma}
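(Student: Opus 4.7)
The plan is to fix the parameter $c$ and an induced subgraph $H$ of $G$ with $\max_{v\in V(H)}\chi(N_H^2[v])\leq c$, then derive a bound $\chi(N_H^\rho[w])\leq g(c,t,\rho)$ for every $w\in V(H)$. Once such a $g$ is in hand, the $(f,\rho)$-control hypothesis applied to $H$ yields $\chi(H)\leq f(g(c,t,\rho))$, so taking $f'(c)=f(g(c,t,\rho))$ certifies that $G$ is $(f',2)$-controlled.

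To bound $\chi(N_H^\rho[w])$ I would perform a breadth-first search from $w$ inside $H$ and set $L_i=\set{x\in V(H)\colon d_H(w,x)=i}$ for $i\in\set{0,\ldots,\rho}$. Every edge of $H[N_H^\rho[w]]$ lies within a single layer or between two consecutive layers, so using one palette of size $\max_i\chi(L_i)$ on even layers and a disjoint palette of the same size on odd layers gives $\chi(N_H^\rho[w])\leq 2\max_i\chi(L_i)$. It therefore suffices to prove a uniform bound $\chi(L_i)\leq h(c,t,\rho)$ for $i\in\set{0,\ldots,\rho}$. The $2$-ball hypothesis immediately yields the local bound $\chi(N_H[u]\cap L_i)\leq\chi(N_H^2[u])\leq c$ for every $u\in L_i$, but this alone does not bound $\chi(L_i)$ (witness high-girth graphs of high chromatic number); the subdivision-exclusion hypothesis must enter at this point to force global control.

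Suppose for contradiction that $\chi(L_i)$ were enormous relative to $c$, $t$, and $\rho$. A Ramsey-style cleanup within $L_i$, iterating peeling of closed neighborhoods of bounded chromatic number with path extraction, should then produce two disjoint high-chromatic subsets $A,B\subseteq L_i$ that are mutually anticomplete in $L_i$. Attaching shortest BFS paths from each vertex of $A\cup B$ back to $w$, after another round of Ramsey thinning of $A$ and $B$ that synchronizes which vertices the paths pick up in each lower layer and rules out unwanted chords between different paths, should yield $t$-element subsets $A_0\subseteq A$, $B_0\subseteq B$ together with $t^2$ internally disjoint induced paths of length between $2$ and $\rho+3$, one for each pair $(a,b)\in A_0\times B_0$. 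This configuration is an induced proper $(\leq\rho+2)$-subdivision of $K_{t,t}$, contradicting the hypothesis on $G$ and completing the argument.

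The main obstacle will be the last step: extracting $t^2$ internally disjoint induced paths in $N_H^\rho[w]$, with the prescribed bipartite endpoint pattern, whose chords have been killed so that the subdivision is truly induced rather than only weak. It is at this point that the $2$-ball bound must be exploited many times to carve away bad neighborhoods, and it is where the constant $\rho+2$ naturally appears (any path of length at most $\rho+3$ traverses at most $\rho+2$ internal vertices, which matches the budget of a BFS ancestor--common-ancestor--BFS ancestor concatenation within $N_H^\rho[w]$). Converting the combinatorial cleanup into an explicit $f'$ should then be a routine bookkeeping exercise.
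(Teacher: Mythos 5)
First, a point of reference: the paper does not prove this lemma at all---it is imported verbatim from \cite{CSS21} (their statement 1.10)---so there is no in-paper proof to compare against, and your argument has to stand on its own. Your outer frame is the correct one and is how such reductions are organized: fix an induced subgraph $H$ with $\max_{v\in V(H)}\chi(N_H^2[v])\leq c$, prove $\chi(N_H^\rho[w])\leq g(c,t,\rho)$ for every $w$, and then invoke $(f,\rho)$-control; the even/odd layer bound $\chi(N_H^\rho[w])\leq 2\max_i\chi(L_i)$ is also fine. The trouble is that everything after that point---which is the entire mathematical content of the lemma---is a chain of ``should''s, and the specific route you sketch has steps that do not survive scrutiny.

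Concretely: (a) producing two mutually anticomplete subsets $A,B$ of a high-chromatic layer that \emph{both} retain large chromatic number is not a ``Ramsey-style cleanup with neighborhood peeling''; in a graph with bounded clique number this is essentially the El-Zahar--Erdős problem, and nothing in your sketch explains how the bounded-$2$-ball or excluded-subdivision hypotheses would be brought to bear on it (iterating ``peel a ball, recurse on what remains'' loses only a bounded amount of chromatic number per step and never forces the second high-chromatic piece to be anticomplete to the first). (b) Even granting $A_0$ and $B_0$, the $t^2$ pairwise internally disjoint induced paths, with no edges between distinct paths except at shared branch vertices, are the entire difficulty of the lemma; you name this as ``the main obstacle'' and offer no mechanism beyond length bookkeeping---and even that bookkeeping is off, since for $a,b\in L_i$ with $i$ close to $\rho$ a descent to a common BFS ancestor $j$ levels down and back has length $2j$, so the ancestors must lie within roughly $(\rho+3)/2$ levels of $L_i$ rather than anywhere in $N_H^\rho[w]$. (c) Your plan invokes the $(f,\rho)$-control hypothesis exactly once, at the very end, so its core is the free-standing claim ``bounded $2$-balls plus no short induced proper subdivision of $K_{t,t}$ implies bounded $\rho$-balls.'' The proof in \cite{CSS21} does not do this in one shot: it descends one radius at a time, showing that $(f,\rho)$-control plus the excluded subdivision yields $(\rho-1)$-control, and the control hypothesis is used \emph{inside} each step, via multicover-type structures like those in \Cref{lem:independent-multicover}, to locate the branch vertices and covering sets from which the subdivision is assembled. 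As it stands the proposal correctly identifies the reduction skeleton and the target configuration but leaves the extraction argument, i.e.\ the actual proof, open.
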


Chudnovsky, Scott, and Seymour~\cite{CSS21} proved that the class of string graphs is $2$-controlled by first proving that it is $3$-controlled using a simple direct argument and then applying \Cref{lem:control-reduction} to infer that it is $2$-controlled.

\begin{theorem}[Chudnovsky, Scott, Seymour {\cite[12.5]{CSS21}}]
\label{thm:string-2-controlled}
The class of string graphs is\/ $2$-controlled.
\end{theorem}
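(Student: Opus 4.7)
The plan is the two-step approach foreshadowed by the text: first establish that string graphs are $3$-controlled by a direct geometric argument, and then invoke \Cref{lem:control-reduction} to upgrade $3$-control to $2$-control. The upgrade step is clean: since $K_{5,5}$ is non-planar, \Cref{prop:string-subdivision} implies that string graphs exclude induced proper subdivisions of $K_{5,5}$, and in particular induced proper $(\leq 5)$-subdivisions of $K_{5,5}$. Applying \Cref{lem:control-reduction} with $\rho=3$ and $t=5$ then turns any $(f,3)$-control statement for string graphs into an $(f',2)$-control statement.

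All the substance therefore lies in proving that string graphs are $3$-controlled. Starting from an induced subgraph $H$ of a string graph with $\max_{v\in V(H)}\chi(N_H^3[v])\leq c$, I would fix a curve representation of $H$, pick an arbitrary vertex $v_0$, and perform a BFS layering $V(H)=L_0\cup L_1\cup\cdots$ from $v_0$. Since every edge of $H$ lies either within a layer or between consecutive layers, coloring odd- and even-indexed layers with disjoint palettes reduces the task to bounding $\chi(H[L_i])$ uniformly in $i$. For this, one truncates each curve of $L_i$ at its first intersection with the combined trace of $L_{i-1}$; the trimmed representation realises $H[L_i]$ as an outerstring-like graph whose boundary ``anchor'' is the envelope of the previous layer. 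A standard $\chi$-boundedness result for outerstring graphs, in the spirit of McGuinness, then yields $\chi(H[L_i])\leq g(\omega(H[L_i]))$ for some function $g$. Finally, every clique in $H[L_i]$ can be placed inside $N_H^3[u]$ for a suitable $u\in V(H)$ (since any two vertices of such a clique are adjacent and have a common neighbor in $L_{i-1}$, so the clique fits within distance $3$ of any of its members), which gives $\omega(H[L_i])\leq c$ and hence $\chi(H[L_i])\leq g(c)$.

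The main obstacle is making the outerstring reduction rigorous: one has to track carefully how the trimmed curves within $L_i$ interact with the envelope of $L_{i-1}$, and verify that any $H[L_i]$-clique genuinely collapses into a single radius-$3$ ball in $H$ around some vertex. Once this geometric picture is clean, the $(f,3)$-control bound follows, and the final $2$-control statement is an immediate consequence of \Cref{lem:control-reduction} combined with \Cref{prop:string-subdivision}.
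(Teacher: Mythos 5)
The paper does not actually prove this theorem---it is imported from Chudnovsky, Scott, and Seymour---but it records their strategy: establish $3$-control by a direct argument, then upgrade to $2$-control via \Cref{lem:control-reduction} together with \Cref{prop:string-subdivision}. Your skeleton matches this outline, and your upgrade step is sound: string graphs exclude \emph{all} induced proper subdivisions of the non-planar graph $K_{5,5}$, in particular the proper $(\leq 5)$-subdivisions required by \Cref{lem:control-reduction} with $\rho=3$.

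The direct $3$-control argument, however, breaks at the outerstring step, and the break is fatal rather than cosmetic. The intermediate claim you rely on---that each BFS layer satisfies $\chi(H[L_i])\leq g(\omega(H[L_i]))$ for some universal $g$---is false. Take Burling's triangle-free segment graphs $B_k$ with $\chi(B_k)=k$ (realized by segments as in Pawlik et al.) and add a single curve $c_0$ crossing every segment; in the resulting string graph $H$, with $L_0=\set{v_0}$, one gets $L_1=V(B_k)$ and $H[L_1]\cong B_k$, so $\omega(H[L_1])=2$ while $\chi(H[L_1])=k$ is unbounded. The reduction fails for two concrete reasons: (a) truncating each curve of $L_i$ at its first intersection with the trace of $L_{i-1}$ destroys intersections between curves of $L_i$, so the trimmed family represents only a spanning subgraph of $H[L_i]$, not $H[L_i]$ itself; and (b) without truncation, the curves of $L_i$ merely all meet a complicated connected set, which is not an outerstring representation, and the available $\chi$-boundedness theorems for grounded families (McGuinness; Rok--Walczak) require either that each curve has an \emph{endpoint} on the grounding curve or that it crosses the grounding curve a bounded number of times---neither holds here. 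The counterexample above shows that no repair confined to bounding $\omega(H[L_i])$ can work: a correct proof of $3$-control must exploit the hypothesis $\max_{v}\chi(N_H^3[v])\leq c$ \emph{within} each layer, beyond its trivial consequence $\omega(H[L_i])\leq c$ (which, incidentally, already follows from $K\subseteq N_H[u]$ for any clique $K$ and any $u\in K$; the detour through common neighbours in $L_{i-1}$ is unnecessary). As it stands, the core of the theorem remains unproved in your write-up.
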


Using \Cref{prop:asdim-subdivision} and \Cref{lem:control-reduction}, we prove the following; see \Cref{sec:asdim}.

\begin{restatable}{theorem}{thmasdimcontrol}
\label{thm:asdim-2-controlled}
Every hereditary graph class with finite asymptotic dimension is\/ $2$-controlled.
\end{restatable}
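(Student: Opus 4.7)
The plan is a direct assembly of the results quoted in the excerpt. Let $\calG$ be a hereditary class with asymptotic dimension $d<\infty$, and let $f'$ be a $d$-dimensional control function for $\calG$. By the observation just before \Cref{thm:control-b-bounded}, every $G\in\calG$ is $(g,f'(1))$-controlled with $g(c)=(d+1)c$. Enlarging $\rho$ in $(f,\rho)$-control only strengthens its hypothesis (since $\chi(N_H^{\rho'}[v])\geq\chi(N_H^\rho[v])$ for $\rho'\geq\rho$), so $(f,\rho)$-control implies $(f,\rho')$-control for every $\rho'\geq\rho$; fixing $\rho=\max(f'(1),3)$, we may assume $\calG$ is $(g,\rho)$-controlled with $\rho\geq 3$. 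Now apply \Cref{prop:asdim-subdivision} with $r=2\rho+5$ to obtain a graph $F$ such that graphs in $\calG$ exclude induced $(\leq 2\rho+5)$-subdivisions of $F$. Letting $\tilde F$ be the $1$-subdivision of $F$, which is bipartite, every induced $(\leq\rho+2)$-subdivision of $\tilde F$ is an induced $(\leq 2\rho+5)$-subdivision of $F$ (each edge of $F$ traverses two edges of $\tilde F$, each subdivided at most $\rho+2$ times, for at most $2\rho+5$ interior vertices in total), so $\calG$ also excludes induced $(\leq\rho+2)$-subdivisions of $\tilde F$.

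To invoke \Cref{lem:control-reduction}, I would then promote this to an exclusion of $K_{t,t}$-subdivisions via the following Ramsey-style claim: for every bipartite graph $H$ and every $r\geq 1$, there exists $t$ such that every proper $(\leq r)$-subdivision of $K_{t,t}$ contains an induced $(\leq r)$-subdivision of $H$. To prove it, color each edge of $K_{t,t}$ by the length (in $\set{2,\ldots,r+1}$) of its corresponding path in the subdivision, and apply the bipartite Ramsey theorem to find, for $t$ sufficiently large in terms of $H$ and $r$, a monochromatic $K_{s,s}$ of the desired size; this corresponds to an honest $\ell$-subdivision of $K_{s,s}$ for some fixed $\ell\in\set{1,\ldots,r}$. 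Now injectively embed $V(H)$ into $V(K_{s,s})$ in a bipartition-preserving manner; the embedding extends to an induced $\ell$-subdivision of $H$ because internal path vertices have degree $2$ and the paths corresponding to distinct edges of $K_{s,s}$ are internally disjoint, so no unwanted edges can appear. Applying the claim to $H=\tilde F$ and $r=\rho+2$ yields the required $t$, and \Cref{lem:control-reduction} with this $t$, $\rho$, and $g$ then produces an $f''\colon\setN\to\setN$ such that every graph in $\calG$ is $(f'',2)$-controlled.

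The main obstacle is the combinatorial claim above. The Ramsey reduction is routine, but the embedding step genuinely requires $H$ to be bipartite---in a bipartite host, non-bipartite subdivisions of a given shape simply may not exist. This is precisely why we reduce from the arbitrary $F$ furnished by \Cref{prop:asdim-subdivision} to its bipartite surrogate $\tilde F$, absorbing the factor of two into the parameter of \Cref{prop:asdim-subdivision} by invoking it with $r=2\rho+5$ instead of $r=\rho+2$. Everything else in the argument is bookkeeping of constants.
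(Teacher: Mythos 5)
Your proof is correct and follows the same route as the paper: derive $\rho$-control from the $d$-dimensional control function, invoke \Cref{prop:asdim-subdivision} with $r=2\rho+5$, convert this to exclusion of induced proper $(\leq\rho+2)$-subdivisions of $K_{t,t}$, and finish with \Cref{lem:control-reduction}. The only differences are that you carefully justify the containment step that the paper merely asserts (your Ramsey/monochromatization detour is superfluous, though --- directly embedding $\tilde F$ bipartition-preservingly into $K_{t,t}$ with $t=\max(\size{V(F)},\size{E(F)})$ already yields an induced $(\leq\rho+2)$-subdivision, paths of mixed lengths being harmless) and that you explicitly handle the requirement $\rho\geq 3$ of \Cref{lem:control-reduction}, which the paper glosses over.
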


A \emph{$1$-subdivision} of a graph $F$ is a subdivision in which every edge of $F$ has been subdivided exactly once.
The main contribution of this work is as follows.

\begin{theorem}
\label{thm:b-bounded}
For every\/ $t\geq 1$, every\/ $2$-controlled class of graphs excluding induced\/ $1$-subdivisions of\/ $K_{t,t}$ is Burling-controlled.
\end{theorem}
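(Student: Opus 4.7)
My plan is to prove the statement by induction on $k$: for every $\omega,k\in\setN$, there is $g(\omega,k)$ such that every graph $G$ in the given class with $\omega(G)\le\omega$ and $\chi(G)>g(\omega,k)$ contains $B'_k$ as an induced subgraph. Applying this conclusion with $k$ replaced by $3\cdot 2^{k-1}-2$ and invoking \Cref{lem:burling}\ref{item:burling-2} yields $B_{k+1}$ as an induced subgraph of $G$, hence $\beta(G)\ge k+1$, which delivers Burling-control. The base case $k=1$ is immediate since $B'_1$ is a single edge.

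The inductive step mirrors the recursive definition of $B'_{k+1}$ from $B'_k$. Given $G$ with very large chromatic number, I would first use the $2$-controlled hypothesis to find a vertex $v$ for which $\chi(G[N^2[v]])$ exceeds any quantity required in the subsequent phases; since $\chi(N^2[v])\le\chi(N(v))+\chi(N^2(v))+1$, at least one of $\chi(N(v))$, $\chi(N^2(v))$ is large. In the first sub-case I would apply the inductive hypothesis inside $N(v)$ to obtain a base copy $H\cong B'_k$ on which $v$ is a universal vertex, and then carry out a two-step thinning: (a) for each apex $a\in V_A(H)$, restrict to the high-chromatic subset of $N(v)\setminus V(H)$ whose trace on $V(H)$ is exactly $N_H(a)$ (standard pigeonhole loses only a factor depending on $\size{V(H)}$ and $\omega$); and (b) in each such subset apply induction once more to produce a fresh copy $B^a\cong B'_k$, then choose a connector $b^a$ adjacent to all of $V_A(B^a)$ and, by the trace condition, to precisely $N_H(a)$ in $H$. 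The second sub-case, where $\chi(N^2(v))$ is large, would be handled analogously, using $v$ together with a carefully chosen vertex of $N(v)$ as a two-step bridge to emulate the connector structure.

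The main obstacle is ensuring the final configuration is genuinely induced: the gadgets attached to different apices $a$ and $a'$ must be mutually anticomplete, and each gadget must acquire no stray edges to $V(H)$ outside the designated $N_H(a)$. Whenever the thinning fails to yield clean gadgets---because too many candidate connectors attach to too many wrong apices---a Ramsey-type extraction inside the bipartite auxiliary graph between apices and connector candidates, in the spirit of the cleaning arguments of Chudnovsky, Scott, and Seymour~\cite{CSS21}, would produce $t$ apices and $t$ vertices that together embed an induced $1$-subdivision of $K_{t,t}$, contradicting the class hypothesis. Iterating this clean extraction so that all $\size{V_A(H)}$ apices are simultaneously served while preserving the chromatic reservoirs needed for the fresh copies $B^a$ is the technical heart of the argument, and where I expect the real difficulty to lie.
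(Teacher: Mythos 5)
Your overall reduction (bound $\chi$ in terms of $\omega$ and the largest induced $B'_k$, then transfer to $B_k$ via \Cref{lem:burling}\ref{item:burling-2}) is consistent with what the paper does, and your observation that one may assume all neighbourhoods $N(v)$ have bounded chromatic number is exactly the paper's first reduction. But the core of your inductive step has a genuine gap, concentrated in thinning step (a). You propose to pass, for each apex $a\in V_A(H)$, to the subset of candidate vertices ``whose trace on $V(H)$ is exactly $N_H(a)$'' and assert that pigeonhole over the $2^{\size{V(H)}}$ trace classes loses only a bounded factor. Pigeonhole only guarantees that \emph{some} trace class retains a constant fraction of the chromatic number; it gives no control over \emph{which} class that is, and there is no reason the high-chromatic class should have trace $N_H(a)$ for the particular apex $a$ you need (generically it will be the anticomplete class). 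Since you need this to succeed simultaneously for every apex $a$, with the resulting gadgets $B^a$ pairwise anticomplete and anticomplete to $H$, the argument does not go through. This is precisely the obstruction that makes the theorem hard: one cannot prescribe the induced trace of newly found structure on already-found structure by density/pigeonhole alone.

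The paper's proof avoids this by never trying to attach gadgets to a copy found earlier. Instead it builds a long \emph{multicover} $(\set{x_i},\set{N_i},C)$ by iterating the $2$-control hypothesis through second neighbourhoods; the covering condition (every vertex of $C$ has a neighbour in each $N_i$) is what replaces your trace prescription. A Ramsey argument then yields either a long \emph{independent} multicover---whose core has bounded chromatic number by the Chudnovsky--Scott--Seymour lemma extracting an induced $1$-subdivision of $K_{t,t}$ (this is \Cref{lem:independent-multicover}, a nontrivial black box, not the ad hoc bipartite Ramsey extraction you sketch)---or a ``skew'' configuration from which copies of the Burling graphs are grown \emph{inside neighbourhoods of designated vertices}: the connector $b^a$ is chosen \emph{first} as a vertex $v^a$ of the next multicover level, and the copy $B^a$ is then found rooted in $N(v^a)$, reversing the order of your construction. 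On top of this, obtaining genuinely \emph{induced} copies (rather than homomorphic images, which is all the simpler version of the argument yields) requires a further layer of bookkeeping via clique-multicovers, tidiness, and excluded ``small'' sets. Your second sub-case ($\chi(N^2(v))$ large), which you leave as ``handled analogously,'' is in fact where all of this machinery lives, so the proposal as written does not constitute a proof.
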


All other Burling-control results of this paper are corollaries to \Cref{thm:b-bounded}.

\begin{proof}[Proof of \Cref{thm:string-b-bounded}]
By \Cref{thm:string-2-controlled}, the class of string graphs is $2$-controlled.
Since the graph $K_{3,3}$ is non-planar, \Cref{prop:string-subdivision} implies that no string graph contains an induced $1$-subdivision of $K_{3,3}$.
The theorem now follows from \Cref{thm:b-bounded}.
\end{proof}

\begin{proof}[Proof of \Cref{thm:asdim-b-bounded}]
Let $\calG$ be a hereditary graph class with finite asymptotic dimension.
By \Cref{thm:asdim-2-controlled}, $\calG$ is $2$-controlled.
By \Cref{prop:asdim-subdivision} applied with $r=3$, there is a graph $F$ such that the graphs in $\calG$ exclude $(\leq 3)$-subdivisions of $F$.
Let $t=\max(\size{V(F)},\size{E(F)})$.
Since a $1$-subdivision of $K_{t,t}$ contains an induced $3$-subdivision of $F$, the theorem follows from \Cref{thm:b-bounded}.
\end{proof}

\begin{proof}[Proof of \Cref{thm:control-b-bounded}]
Let $\calG$ be a class of graphs excluding induced $(\leq 2\rho+5)$-subdivisions of some fixed graph $F$.
Let $t=\max(\size{V(F)},\size{E(F)})$.
Since a proper $(\leq\rho+2)$-subdivision of $K_{t,t}$ contains an induced $(\leq 2\rho+5)$-subdivision of $F$, \Cref{lem:control-reduction} implies that $\calG$ is $2$-controlled.
The theorem now follows from \Cref{thm:b-bounded}.
\end{proof}

We prove \Cref{thm:region-asdim} in \Cref{sec:region}.
\Cref{thm:region-b-bounded} then follows directly from \Cref{thm:region-asdim,thm:asdim-b-bounded}.

\subsection{Multicovers}

Multicovers are a useful tool for proving bounds on the chromatic number in $2$-controlled graph classes, developed in the works of Chudnovsky, Scott, and Seymour~\cite{CSS17,CSS21}.

A \emph{multicover} in $G$ with \emph{index set} $I\subseteq\setN$ is a triple $(\set{x_i}_{i\in I},\set{N_i}_{i\in I},C)$ with $x_i\in V(G)$ for $i\in I$, $N_i\subseteq V(G)$ for $i\in I$, and $C\subseteq V(G)$, such that the following conditions hold:
\begin{itemize}
\item the sets $\set{x_i}$ for $i\in I$, $N_i$ for $i\in I$, and $C$ are pairwise disjoint;
\item for every $i\in I$, the vertex $x_i$ is complete to $N_i$ and anticomplete to $\bigcup_{j\in I,j>i}(\set{x_j}\cup N_j)\cup C$;
\item for every $i\in I$, the set $N_i$ covers $C$.
\end{itemize}
The multicover is \emph{independent} if it satisfies the following additional condition: for every $j\in I$, the vertex $x_j$ is anticomplete to $\bigcup_{i\in I,i<j}N_i$.
For notational convenience, we allow the sets $N_i$ to be empty in a multicover, in which case $C$ must also be empty if $I\neq\emptyset$.

Let $G$ be an $(f,2)$-controlled graph with $\omega(G)\leq k$ for some function $f\colon\setN\to\setN$ and some $k\geq 1$.
If we are to prove an upper bound on $\chi(G)$, we can set up induction on $k$ and therefore assume, as the induction hypothesis, that there is a constant $c$ such that $\chi(H)\leq c$ for every induced subgraph $H$ of $G$ with $\omega(H)\leq k-1$.
So, in particular, $\chi(N[v])\leq c+1$ for every $v\in V(G)$, because $\omega(N(v))\leq k-1$.
For every induced subgraph $H$ of $G$, since $G$ is $(f,2)$-controlled and $\chi(N_H^2[x])\leq\chi(N_H^2(x))+c+1$, we have $\chi(H)\leq f(c'+c+1)$ whenever $\max_{x\in V(H)}\chi(N_H^2(x))\leq c'$.
Let $c_0\in\setN$ and $c_\ell=f(c_{\ell-1}+c+1)$ for $\ell\geq 1$.

Now, there is a natural way of iteratively extracting a long multicover in $G$ when $\chi(G)$ is sufficiently large.
Let $G_0=G$, and suppose $\chi(G_0)>c_\ell$.
By the above, there is $x_1\in V(G_0)$ with $\chi(N_{G_0}^2(x_1))>c_{\ell-1}$, so let $N_1=N_{G_0}(x_1)$ and $G_1=G_0[N_{G_0}^2(x_1)]$.
Repeat the same to find $x_2\in V(G_1)$, $N_2=N_{G_1}(x_2)$, and $G_2=G_1[N_{G_1}^2(x_2)]$ with $\chi(G_2)>c_{\ell-2}$, and so on.
After $\ell$ such iterations and setting $C=V(G_\ell)$, it follows that $(\set{x_i}_{i\in[\ell]},\set{N_i}_{i\in[\ell]},C)$ is a multicover in $G$ with $\chi(C)>c_0$.

The multicover constructed above is far from independent, because $x_j$ has a neighbor in $N_i$ for all $i$ and $j$ with $1\leq i<j\leq\ell$.
In fact, a sufficiently long independent multicover would imply an induced $1$-subdivision of $K_{t,t}$, as stated by the next lemma.

\begin{lemma}[Chudnovsky, Scott, Seymour {\cite[7.3]{CSS21}}]
\label{lem:independent-multicover}
For all\/ $t,k\geq 1$ and\/ $c_1\geq 0$, there are\/ $m\geq 1$ and\/ $d\geq 0$ such that the following holds.
Let\/ $G$ be a graph satisfying the following conditions:
\begin{itemize}
\item $G$ contains no induced\/ $1$-subdivision of\/ $K_{t,t}$;
\item $\omega(G)\leq k$;
\item $\chi(S)\leq c_1$ for every set\/ $S\subseteq V(G)$ with\/ $\omega(S)\leq k-1$.
\end{itemize}
Then\/ $\chi(C)\leq d$ for every independent multicover\/ $(\set{x_i}_{i\in I},\set{N_i}_{i\in I},C)$ with\/ $\size{I}=m$ in\/ $G$.
\end{lemma}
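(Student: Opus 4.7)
The plan is to prove the contrapositive: if $m$ is sufficiently large and $\chi(C)$ exceeds a threshold $d$ to be determined, then we can locate an induced $1$-subdivision of $K_{t,t}$ inside $G$, contradicting the first hypothesis. Recall that such a subdivision consists of branch vertices $\set{u_j}_{j\in[t]}\cup\set{v_s}_{s\in[t]}$ together with subdivision vertices $\set{w_{j,s}}_{(j,s)\in[t]^2}$ such that $w_{j,s}$ is adjacent to exactly $u_j$ and $v_s$ among these $2t+t^2$ vertices, with $\set{u_j}$ and $\set{v_s}$ each independent and no edge between them.

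I would set $u_j=x_{i_j}$ for selected indices $i_1<\cdots<i_t$ in $I$, which is possible as soon as $m\geq t$. Independence of the multicover immediately supplies that $\set{x_{i_j}}_{j\in[t]}$ is independent, that each $x_{i_j}$ is anticomplete to $C$, and that $x_{i_j}$ is anticomplete to $N_{i_{j'}}$ for all $j'\neq j$. Hence the remaining work is to locate branch vertices $v_s=c_s\in C$ and subdivision vertices $y_{j,s}\in N_{i_j}\cap N_G(c_s)$ subject to the following non-adjacency requirements: (A) the $c_s$ are pairwise non-adjacent; (B) $y_{j,s}$ is non-adjacent to $c_{s'}$ for $s'\neq s$; and (C) $y_{j,s}$ and $y_{j',s'}$ are non-adjacent for all distinct $(j,s),(j',s')\in[t]^2$.

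I would build the $t\times t$ grid $\set{y_{j,s}}$ iteratively, peeling off one column $s\in[t]$ at a time and, within a column, one row $j\in[t]$ at a time. Before each selection, restrict attention to the subset $C'\subseteq C$ of vertices that are non-adjacent to every previously chosen $c_{s'}$ and $y_{j',s'}$. Here the third hypothesis, together with $\omega(G)\leq k$, is crucial: removing the closed neighborhood of a single chosen vertex $v$ from $C$ drops $\chi$ by at most $c_1+1$ because $\omega(N(v))\leq k-1$, so starting with $\chi(C)$ sufficiently large keeps $\chi(C')$ positive through all $O(t^2)$ such restrictions. In each surviving $C'$ we have $\omega(C')\leq k$, so Ramsey yields a large independent set; from this set we pick $c_s$ together with compatible candidates in each $N_{i_j}\cap N_G(c_s)$, and then apply a further nested Ramsey-style extraction over $j\in[t]$ to pick $y_{j,s}$ one at a time while maintaining the required non-adjacencies against the already chosen vertices in the current column.

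The main obstacle is condition (C): the multicover structure says nothing about $N_{i_j}$-to-$N_{i_{j'}}$ adjacencies, so non-adjacencies between subdivision vertices in different rows must be forced by hand. This is what makes the required $m$ and $d$ grow rapidly in $t,k,c_1$, since each of the $t^2$ subdivision vertices costs one further layer of Ramsey-style extraction and a bounded additive loss in the chromatic number of the working set; the correct bookkeeping across columns (which consume chromatic number in $C$) and within columns (which consume Ramsey budget) is the delicate part of the argument.
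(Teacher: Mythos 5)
The paper does not actually prove this lemma: it is imported as a black box from Chudnovsky, Scott, and Seymour \cite[7.3]{CSS21}, and the present paper only remarks in its algorithmic section that the proof there can be made effective. So there is no in-paper proof to compare against, and I can only assess your argument on its own terms.

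Your skeleton is the natural one, and the easy parts are correct: taking $u_j=x_{i_j}$, the multicover and independence conditions do give that $\set{x_{i_j}}_{j\in[t]}$ is independent, anticomplete to $C$, and that $x_{i_j}$ is anticomplete to $N_{i_{j'}}$ for $j'\neq j$; and deleting $N[v]$ from $C$ for a chosen vertex $v$ costs only $c_1+1$ in chromatic number, by the third hypothesis applied to $N(v)$. The genuine gap is exactly at the point you flag as ``the delicate part'' and then do not resolve: the existence of $y_{j,s}\in N_{i_j}\cap N(c_s)$ non-adjacent to the \emph{previously} chosen vertices $c_{s'}$ and $y_{j',s'}$ with $s'<s$ (half of your condition (B) and the cross-column part of (C)). Shrinking $C$ by deleting closed neighborhoods controls only the \emph{future} choices of the $c_s$; it says nothing about the candidate sets $N_{i_j}\cap N(c_s)$, which live outside $C$ and may be entirely contained in $N(c_{s'})$ or in $N(y_{j',s'})$ for a single earlier choice. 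Ramsey via $\omega(G)\leq k$ does not rescue this either: within a fixed column one can extract an independent transversal of the $N_{i_j}$ by shrinking the index set (which is why $m$ must be an iterated Ramsey number), but across columns the obstruction is a complete bipartite pattern between $N_{i_j}\cap N(c_s)$ and the earlier choices, which is perfectly consistent with $\omega\leq k$ and is not itself the induced structure you seek. A covering argument also fails: the set of candidates $c_s$ for which $N_{i_j}\cap N(c_s)$ is entirely ``bad'' is merely \emph{covered} by a low-chromatic set, and being covered by a low-chromatic set does not bound chromatic number. The hypotheses must therefore be exploited in a more structural way than ``each deletion costs $c_1+1$, plus Ramsey''; this is precisely why \cite{CSS21} devotes several preparatory lemmas, built on the multicover machinery of their earlier papers, to this one statement, and why the present paper treats it as a black box rather than reproving it.
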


We are ready to state our main technical lemma.

\begin{restatable}{lemma}{lemmain}
\label{lem:main}
For all\/ $m,k,r\geq 1$, $f\colon\setN\to\setN$, and\/ $c_1,d\geq 0$, there is\/ $c\geq 0$ such that the following holds.
Let\/ $G$ be a graph satisfying the following conditions:
\begin{itemize}
\item $G$ is\/ $(f,2)$-controlled;
\item $\omega(G)\leq k$;
\item $\beta(G)\leq r$;
\item $\chi(N(v))\leq c_1$ for every vertex\/ $v$ of\/ $G$;
\item $\chi(C)\leq d$ for every independent multicover\/ $(\set{x_i}_{i\in I},\set{N_i}_{i\in I},C)$ with\/ $\size{I}=m$ in\/ $G$.
\end{itemize}
Then\/ $\chi(G)\leq c$.
\end{restatable}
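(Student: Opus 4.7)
I would proceed by induction on $k = \omega(G)$. The base case $k = 1$ is immediate: $G$ has no edges and $\chi(G) \leq 1$. For the inductive step, the inductive hypothesis together with the assumption $\chi(N(v)) \leq c_1$ lets one apply $(f,2)$-control exactly as in the paragraph just before the lemma statement: choosing $c_0$ sufficiently large (in terms of $m, r, d$) and setting $c_{\ell+1} = f(c_\ell + c_1 + 1)$, one shows that if $\chi(G) > c_L$ for $L$ sufficiently large, then $G$ admits a multicover $(\{x_i\}_{i\in[L]}, \{N_i\}_{i\in[L]}, C)$ with $\chi(C) > c_0$. To finish, I would derive a contradiction from the existence of such a long multicover with high $\chi(C)$.

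The key claim I would isolate is the following: for all $m, r, d, c_1$, there exist $L$ and $\chi_0$ such that, in every graph $G$ with $\chi(N(v)) \leq c_1$ for all $v$ and every multicover in $G$ with index set of size at least $L$ and $\chi(C) \geq \chi_0$, one of the following holds: (a) $G$ contains an independent sub-multicover with index set of size $m$ and with final set $C'\subseteq C$ satisfying $\chi(C') > d$ (contradicting the lemma's hypothesis), or (b) $G$ contains an induced copy of $B'_{3 \cdot 2^{r-1}-2}$, which by \Cref{lem:burling}\ref{item:burling-2} contains $B_{r+1}$ (contradicting $\beta(G) \leq r$). The advantage of searching for $B'_s$ rather than $B_s$ is that the inductive construction of $B'_{k+1}$ matches the multicover structure almost verbatim: the new vertex $b^a$ is complete to $N_{B'_k}(a) \cup V_A(B^a)$ and anticomplete to the rest of $B'_k$, which is exactly the adjacency profile of a multicover vertex $x_i$ toward the earlier $N_j$'s (with $j<i$) and toward the later part of the multicover.

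The plan for the key claim is induction on $r$. At each step one partitions the multicover indices according to the pattern of cross-edges from $x_j$ (with $j$ large) into the earlier $N_i$'s, applies Ramsey-type pigeonhole to pass to a long sub-multicover in which this pattern is uniform, and then either (if essentially no cross-edges occur) extracts an independent sub-multicover of size $m$, or (if a regular pattern of cross-edges occurs) uses one "outer" index $i_0$ together with its copy of the inductive extraction applied to the multicover suffix to attach a fresh $b^a$-layer on top of a $B'_{3\cdot 2^{r-2}-2}$ produced inside $C$ or inside a carefully chosen $N_{i_0}$-slice. The main obstacle is the bookkeeping needed to guarantee that, at every recursive level, the residual set feeding the next extraction still has chromatic number large enough to trigger the inductive hypothesis; this forces $L$ and $\chi_0$ to grow rapidly with $r$, but in a controlled fashion — one must reserve enough "chromatic budget" in $C$ before each Ramsey reduction so that the chromatic number surviving into the selected colour class still exceeds the next recursive threshold, and one must ensure the various $B^a$-copies extracted at the same level land in vertex-disjoint parts of $G$, which is arranged by greedily picking indices with large gaps in the multicover order.
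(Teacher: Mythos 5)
Your skeleton is right as far as it goes---extract a long multicover from $(f,2)$-control, then run a Ramsey dichotomy on cross-edge patterns to find either an independent sub-multicover of size $m$ or a Burling-type structure, exploiting that the adjacency profile of $b^a$ in the construction of $B'_{k+1}$ matches that of a multicover vertex. But this is precisely the proof of the \emph{weaker} \Cref{lem:hom-main} given in the overview, and that argument only produces a \emph{homomorphism} $B'_n\to G$ (equivalently a bound on $\gamma(G)$), not an induced copy. The hypothesis of \Cref{lem:main} is $\beta(G)\leq r$, so your branch (b) must deliver an \emph{induced} copy of a large Burling graph, and your plan never explains how to guarantee the required \emph{non}-edges: the union of the roots $\bigcup_i\phi_i(V_A(\cdot))$ must be independent, each new vertex $v^a$ must be anticomplete to everything in the assembled graph except $\phi(N_{B'_k}(a))\cup\phi(V_A(B^a))$, and the freshly extracted copies must be anticomplete to all previously placed vertices. ``Vertex-disjoint parts of $G$'' obtained by ``picking indices with large gaps in the multicover order'' does not give anticompleteness, since distinct vertices of $C$ or of the $N_i$'s can be adjacent regardless of which multicover layers they came from.

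Closing this gap is where essentially all of \Cref{sec:main} lives, and it forces several changes to your plan. First, the paper replaces $B'_k$ by the sequence $(\burling{k})$ of \Cref{lem:alt-burling}, built with $\Dup$ rather than $\Ext$, so that each inductive step attaches only \emph{one} new copy and one new vertex and the part sizes stay bounded ($\size{V_A(\burling{k})}=2^{k-1}$, $\size{V_N(\burling{k})}=2^k-1$); attaching a fresh copy above \emph{every} vertex of $V_A$ simultaneously, as your ``fresh $b^a$-layer'' suggests, would require many pairwise anticomplete induced copies at once. Second, anticompleteness of the new copy to the old vertices is arranged by reserving a $(\bar p,\bar q)$-small set $S$ (a bounded union of neighborhoods $N(v)$ and second neighborhoods $N^2(X)$ of $(s+1)$-cliques) and demanding, in the definition of $\Phi(v)$ in \Cref{lem:tidy-step}, that an embedding exist avoiding $N[v]\cup S$ for \emph{every} such small set $S$---a universal quantification your pigeonhole classification does not have. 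Third, controlling $\chi(N^2(X))$ for cliques $X$ of growing size is what makes the reserved sets have bounded chromatic number, and this is why the paper works with $s$-clique-multicovers and runs an outer induction on the clique size $s$ up to $\omega(G)$ (via $(f,s)$-clique-control), on top of the downward induction on multicover length; your single induction on $\omega(G)$, which in any case belongs to the proof of \Cref{thm:b-bounded} rather than of \Cref{lem:main} itself, does not substitute for it. Without these ingredients the step ``attach a $b^a$-layer on top of a $B'$-copy produced inside $C$'' yields only a homomorphic image, and the proof does not reach the stated conclusion.
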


In the next subsection, we present the proof of a weaker but significantly simpler version of \Cref{lem:main}.
We prove \Cref{lem:main} in \Cref{sec:main}.
\Cref{lem:independent-multicover,lem:main} imply \Cref{thm:b-bounded} as follows.

\begin{proof}[Proof of \Cref{thm:b-bounded}]
We prove that for all $t,k,r\geq 1$ and $f\colon\setN\to\setN$, there is a constant $c\geq 0$ such that every $(f,2)$-controlled graph $G$ with $\omega(G)\leq k$ and $\beta(G)\leq r$ that excludes induced $1$-subdivisions of $K_{t,t}$ satisfies $\chi(G)\leq c$.

The proof proceeds by induction on $k$.
For the base case of $k=1$, it suffices to take $c=1$.
For the induction step, suppose that $k\geq 2$ and the statement holds for $k-1$.
Thus, there is $c_1\geq 1$ such that every $(f,2)$-controlled graph $H$ with $\omega(H)\leq k-1$ and $\beta(H)\leq r$ that excludes induced $1$-subdivisions of $K_{t,t}$ satisfies $\chi(G)\leq c_1$.
For every $v\in V(G)$, since $\omega(N(v))\leq k-1$, the above applied to $H=G[N(v)]$ yields $\chi(N(v))\leq c_1$.
By \Cref{lem:independent-multicover}, there are $m\geq 1$ and $d\geq 0$ such that $\chi(C)\leq d$ for every independent multicover $(\set{x_i}_{i\in I},\set{N_i}_{i\in I},C)$ with $\size{I}=m$ in $G$.
The statement for $k$ thus follows from \Cref{lem:main}.
\end{proof}

\subsection{Homomorphisms from Burling graphs}

An induced subgraph of $G$ isomorphic to $B_k$ is a structurally very neat witness of the fact that $\chi(G)\geq k$, but if we just aim at finding any witness of bounded size, it is unnecessarily strong.
In this overview, we present the proof of a weaker but significantly simpler version of \Cref{lem:main}, which still conveys the key ideas and, in particular, explains how Burling graphs become involved.

A \emph{homomorphism} from a graph $F$ to a graph $G$ is a mapping $\phi\colon V(F)\to V(G)$ (not necessarily injective) such that $\phi(u)\phi(v)\in E(G)$ for every $uv\in E(F)$.
A crucial property is that if there is a homomorphism $F\to G$, then $\chi(F)\leq\chi(G)$.
Thus, in order to certify that $\chi(G)\geq k$, it is enough to find a homomorphism $B_k\to G$.
Let $\gamma(G)$ denote the maximum $k$ such that there is a homomorphism $B_k\to G$; in particular, $\gamma(G)\leq\beta(G)$ for every graph $G$.
The aforesaid weaker version of \Cref{thm:b-bounded} uses a bound on $\gamma(G)$ in place of $\beta(G)$.

\begin{lemma}
\label{lem:hom-main}
For all\/ $m,k,r\geq 1$, $f\colon\setN\to\setN$, and\/ $c_1,d\geq 0$, there is\/ $c\geq 0$ such that the following holds.
Let\/ $G$ be a graph satisfying the following conditions:
\begin{itemize}
\item $G$ is\/ $(f,2)$-controlled;
\item $\omega(G)\leq k$;
\item $\gamma(G)\leq r$;
\item $\chi(N(v))\leq c_1$ for every vertex\/ $v$ of\/ $G$;
\item $\chi(C)\leq d$ for every independent multicover\/ $(\set{x_i}_{i\in I},\set{N_i}_{i\in I},C)$ with\/ $\size{I}=m$ in\/ $G$.
\end{itemize}
Then\/ $\chi(G)\leq c$.
\end{lemma}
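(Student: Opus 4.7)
The plan is to prove by induction on $\ell$ that there exists a constant $c_\ell$, depending on $m$, $k$, $f$, $c_1$, $d$, such that every graph $G$ satisfying the hypotheses of \Cref{lem:hom-main} with $\chi(G) > c_\ell$ admits a homomorphism $B'_\ell \to G$. Applying this with $\ell_0 = 3 \cdot 2^{r-1} - 2$ and composing with the induced embedding $B_{r+1} \hookrightarrow B'_{\ell_0}$ guaranteed by \Cref{lem:burling}(ii) yields a homomorphism $B_{r+1} \to G$, contradicting $\gamma(G) \leq r$; hence the lemma holds with $c = c_{\ell_0}$. The base case $\ell = 1$ is immediate because $B'_1 = B_2$ is a single edge.

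For the induction step from $\ell$ to $\ell+1$, I would use the iterative neighborhood extraction from the overview: choosing parameters appropriately, any graph $G$ with $\chi(G)$ sufficiently large yields a multicover $(\set{x_i}_{i \in [s]}, \set{N_i}_{i \in [s]}, C)$ in $G$ with $s$ and $\chi(C)$ as large as needed. The induction hypothesis applied to $G[C]$, which inherits all the hypotheses from $G$, produces a homomorphism $\phi_0 \colon B'_\ell \to G[C]$. It remains to extend $\phi_0$ to a homomorphism $\phi \colon B'_{\ell+1} \to G$ by specifying $\phi$ on the bridges $b^a$ and on the attached copies $B^a$ for each apex $a \in V_A(B'_\ell)$. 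Setting $\phi(b^a) = \phi_0(a)$ fulfills the requirement that $b^a$ be adjacent to $N_{B'_\ell}(a)$ automatically, since $\phi_0$ is a homomorphism; so the task reduces to producing, for each $a$, a homomorphism $\psi^a \colon B'_\ell \to G$ with $\psi^a(V_A(B'_\ell)) \subseteq N_G(\phi_0(a))$.

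The main obstacle is supplying these $\psi^a$: since $\chi(N_G(\phi_0(a))) \leq c_1$ by hypothesis, the induction hypothesis cannot be invoked directly inside $G[N_G(\phi_0(a))]$. I would overcome this by strengthening the inductive statement so that it produces, alongside a homomorphism $B'_\ell \to G$, an \emph{anchor} vertex $z \in V(G)$ adjacent to every apex image of the homomorphism. The multicover itself naturally provides anchors: since $x_i$ is complete to $N_i$, any homomorphism sending $V_A(B'_\ell)$ into $N_i$ is anchored by $x_i$, and the strengthened IH applied inside $G[C]$ supplies a hom $\phi_0$ together with an anchor $z \in C$ adjacent to all of $\phi_0(V_A(B'_\ell))$. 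This anchor $z$ then serves as the common image of the apex sets $V_A(B^a)$ of all attached copies: collapsing each independent set $V_A(B^a)$ onto $z$ makes the required edges $b^a V_A(B^a)$ correspond to edges $\phi_0(a) z$, which exist by the anchor property.

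The most delicate aspect of the proof will be formulating the strengthened inductive statement precisely enough to also supply, alongside the anchor, the data needed to route the non-apex vertices of each attached copy $B^a$ through $G$ consistently with the collapse $\psi^a(V_A(B^a)) = \set{z}$, and arranging the iterative extraction so that sufficiently many fresh multicover layers are available at every level of the recursion to carry out all the required routings in parallel. These bookkeeping issues are driven by the $(f,2)$-control hypothesis, which ensures that every iteration of the extraction preserves a controlled amount of chromatic number, allowing the recursion to close after $\ell_0$ levels and yielding the desired homomorphism $B'_{\ell_0} \to G$.
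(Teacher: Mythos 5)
Your top-level framing is sound and matches the paper's: work with the sequence $(B'_\ell)_{\ell\geq 1}$, show that large chromatic number forces a homomorphism $B'_{\ell_0}\to G$ for $\ell_0=3\cdot 2^{r-1}-2$, and contradict $\gamma(G)\leq r$ via \Cref{lem:burling}~\ref{item:burling-2}; setting $\phi(b^a)=\phi_0(a)$ is also exactly what the paper does. But the central device of your induction step---collapsing the apex set $V_A(B^a)$ of each attached copy onto a single anchor vertex $z$---cannot work. Already $B'_2$ is the path $a$--$b$--$b^a$--$a'$--$b'$ with $V_A(B'_2)=\set{a,a'}$: the non-apex vertices $b$ and $b^a$ are adjacent, and each is adjacent to an apex. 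If both apexes are mapped to $z$, the images of $b$ and $b^a$ must both lie in $N_G(z)$ and be adjacent to each other, i.e., $G$ must contain a triangle. More generally, for every $\ell\geq 2$ the quotient of $B'_\ell$ obtained by identifying $V_A(B'_\ell)$ to a point contains a triangle, so it admits no homomorphism into any triangle-free graph. Since the lemma must hold for $k=2$, and triangle-free graphs are precisely the critical case (Burling graphs themselves are triangle-free), your induction step fails for every $\ell\geq 2$, and no amount of bookkeeping about ``routing the non-apex vertices'' can repair it: the required configuration simply does not exist in $G$.

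The paper's proof (\Cref{lem:hom-step}) avoids this by never collapsing the apex set of a copy. For each apex $a$, with $v^a=\phi_{\ell+1}(a)$ playing the role of the image of $b^a$, it produces a genuine homomorph $\set{\phi^a_i}_{i\in J}$ of the copy $B^a$ whose \emph{distinct} apex images lie in $N(v^a)$ and whose non-apex images lie back in $C$. These are supplied not by applying the induction hypothesis inside $N(v^a)$ (which, as you correctly note, is impossible since $\chi(N(v^a))\leq c_1$), but by a recursive classification $\Phi(v)$ of the vertices $v\in C$ according to which rooted homomorphs of which $B'_j$ are available in their neighborhoods, combined with the machinery of $n$-clean multicovers and a downward induction on the multicover length. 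If you want to salvage your outline, the single anchor vertex has to be replaced by this kind of per-vertex record of available rooted copies, so that the attached copies can be realized with their apexes spread over $N(v^a)$ rather than contracted to a point.
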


Plugging in \Cref{lem:hom-main} in place of \Cref{lem:main} in the proof of \Cref{thm:b-bounded} leads to the following weaker version of the latter.

\begin{theorem}
For every\/ $t\geq 1$ and every\/ $2$-controlled class\/ $\calG$ of graphs excluding induced\/ $1$-subdivisions of\/ $K_{t,t}$, there is a function\/ $g\colon\setN\times\setN\to\setN$ such that every graph\/ $G\in\calG$ satisfies\/ $\chi(G)\leq g(\omega(G),\gamma(G))$.
\end{theorem}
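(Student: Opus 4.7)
The plan is to follow the proof schema of \Cref{thm:b-bounded} verbatim, with \Cref{lem:hom-main} taking the role of \Cref{lem:main} and the quantity $\gamma$ replacing $\beta$. Namely, I would induct on the clique bound $K=\omega(G)$; the base case $K=1$ is trivial, and for the inductive step, the induction hypothesis applied to each neighborhood $N(v)$ (which satisfies $\omega(N(v))\leq K-1$, and still has $\gamma\leq r$ since $\gamma$ is monotone under induced subgraphs) supplies the bound $c_1$, while \Cref{lem:independent-multicover} supplies the multicover parameters $m$ and $d$. \Cref{lem:hom-main} then closes the induction.

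The substantial work is therefore the proof of \Cref{lem:hom-main}. I would argue by contrapositive: if $\chi(G)$ exceeds a sufficiently large constant $c$, then $G$ admits a homomorphism $B_{r+1}\to G$, contradicting $\gamma(G)\leq r$. By \Cref{lem:burling}(ii), $B_{r+1}$ embeds as an induced subgraph of $B'_{3\cdot 2^{r-1}-2}$; since inclusion is a homomorphism, it suffices to produce a homomorphism $B'_\ell\to G$ for $\ell:=3\cdot 2^{r-1}-2$. This is established by an auxiliary induction on $\ell$: for every $\ell\geq 1$ there is a threshold $c_\ell$ such that any $G$ obeying all hypotheses of \Cref{lem:hom-main} \emph{except} the bound on $\gamma$, with $\chi(G)>c_\ell$, admits a homomorphism $B'_\ell\to G$. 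The base case $\ell=1$ is immediate because $B'_1$ is a single edge.

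For the inductive step $\ell\mapsto\ell+1$, one iteratively extracts a multicover $(\set{x_i}_{i\in [m]},\set{N_i}_{i\in [m]},C)$ in $G$ of depth $m$, with the control thresholds tuned so that $\chi(C)>\max(c_\ell,d)$. If this multicover were independent, the last hypothesis of the lemma would give $\chi(C)\leq d$, a contradiction; hence it is non-independent, and the distance-$2$ structure of the extraction guarantees that each $x_j$ admits a back-neighbor $y_{i,j}\in N_i\cap N(x_j)$ for every $i<j$. One then invokes the inductive hypothesis inside $G[C]$ to obtain a homomorphism $\psi\colon B'_\ell\to G[C]$, and extends $\psi$ to $\phi\colon B'_{\ell+1}\to G$ by exploiting the fact that homomorphisms may collapse disjoint parts of the source: the freshly added inner copies $B^a\cong B'_\ell$ may all be identified with $\psi$'s image, and each connector vertex $b^a$ is routed through a suitable $x_j$, with the required adjacencies $\phi(b^a)\sim\phi(N_{B'_\ell}(a))$ and $\phi(b^a)\sim\phi(V_A(B^a))$ witnessed respectively by the back-edges $y_{i,j}$ and by the multicover's covering property.

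The main obstacle is that a naive one-level collapse of every connector to a single vertex $z$ would demand $\chi(N(z))\geq c_\ell$, whereas the hypothesis only guarantees $\chi(N(z))\leq c_1$. The remedy is to iterate the multicover extraction recursively---building a tower of multicovers of depth $m$ nested inside one another, each new level extracted inside the preceding $C$---and to spread the anchors and connectors of $B'_{\ell+1}$ across the successive levels so that every edge of $B'_{\ell+1}$ is realised either as an intra-level edge $x_iy$ with $y\in N_i$ or as an inter-level back-edge $y_{i,j}$. The delicate part is the combinatorial bookkeeping: one must pigeonhole inside each level (using $\chi(N(v))\leq c_1$) while still leaving the chromatic number of the innermost set large enough to feed the $\ell$-induction, and one must match the doubly-exponentially many anchor images of $V_A(B'_{\ell+1})$ against the linearly many $x_i$'s of a single multicover by repeatedly invoking the homomorphism-collapse freedom. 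It is precisely at this point that the argument relies on $\gamma$ (which admits homomorphic collapses) rather than the stronger $\beta$ demanded by \Cref{lem:main}, explaining why this version is genuinely simpler.
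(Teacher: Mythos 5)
Your first paragraph is exactly the paper's proof of this theorem: the statement follows from \Cref{lem:hom-main} by the same induction on $\omega(G)$ as in the proof of \Cref{thm:b-bounded}, with $\gamma$ (which is monotone under taking induced subgraphs) in place of $\beta$ and with \Cref{lem:independent-multicover} supplying $m$ and $d$. The problem is that you then take on the proof of \Cref{lem:hom-main} itself, and that sketch has a genuine gap.

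Two concrete issues. First, your use of the independent-multicover hypothesis is vacuous: the multicover produced by the iterative distance-$2$ extraction is \emph{never} independent (every $x_j$ automatically has a neighbour in $N_i$ for all $i<j$), so ``if it were independent we would get $\chi(C)\leq d$, contradiction'' yields no information. The hypothesis has to enter through a Ramsey dichotomy applied, for each $v\in C$, to the back-adjacency pattern of the covering vertices $y_i\in N_i\cap N(v)$: either some $n$-element index set is fully ``skewed'', or $v$ falls into one of boundedly many pieces $C_J$, each covered by a genuinely independent sub-multicover obtained after shrinking $N_i$ to $N_i\setminus\bigcup_{j\in J,\,j>i}N(x_j)$; this is the content of the paper's \Cref{lem:hom-base}. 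Second, and more seriously, the extension step cannot route the connectors $b^a$ through the apexes $x_j$: the inductively obtained $\psi\colon B'_\ell\to G[C]$ has its entire image inside $C$, and every $x_j$ is \emph{anticomplete} to $C$, so a connector mapped to $x_j$ has no edges to $\psi(N_{B'_\ell}(a))$ or to $\psi(V_A(B^a))$ at all. You correctly sense that nested multicovers are needed, but the mechanism that actually closes the induction is not ``bookkeeping'': one must work with families of homomorphisms whose $V_A$-images (roots) land in $N_i\cap\bigcap_j N(x_j)$ while their $V_N$-images land in $C$, record for each vertex $v$ the set $\Phi(v)$ of index sets $J$ admitting such a homomorph of $B'_{n-\size{J}}$ rooted inside $N(v)$, partition $C$ by this type, and observe that a newly chosen apex $x_{\ell+1}$ and all the vertices $v^a$ in the root of a hypothetical deeper homomorph share the same type --- so that homomorphs of $B'_k$ rooted at the various $N(v^a)$ can be glued, with $b^a\mapsto v^a$ (a vertex of $C$, not an apex), into a homomorph of $B'_{k+1}$ contradicting cleanness one level up. This gluing, carried out in the paper's \Cref{lem:hom-step}, is the heart of the argument and is absent from your sketch.
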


The rest of this subsection is devoted to the proof of \Cref{lem:hom-main}.
We need several technical but important definitions and two lemmas.
We first state the lemmas and apply them to complete the proof of \Cref{lem:hom-main}, and then we prove the lemmas.

For a multicover $\calN=(\set{x_i}_{i\in I},\set{N_i}_{i\in I},C)$ in $G$, an \emph{$\calN$-skewer} in $G$ is a pair $(\set{y_i}_{i\in I},z)$ with $z\in C$ and $y_i\in N_i\cap N(z)\cap\bigcap_{j\in I,j\geq i}N(x_j)$ for every $i\in I$; see \Cref{fig:skewer}.
For a multicover $\calM=(\set{x_i}_{i\in[\ell]},\set{N_i}_{i\in[\ell]},C)$ and an index set $I\subseteq[\ell]$, let $\calM_I$ denote the multicover $(\set{x_i}_{i\in I},\set{N_i}_{i\in I},C)$.

\begin{figure}
  \centering
  \includegraphics[page=5,scale=0.7]{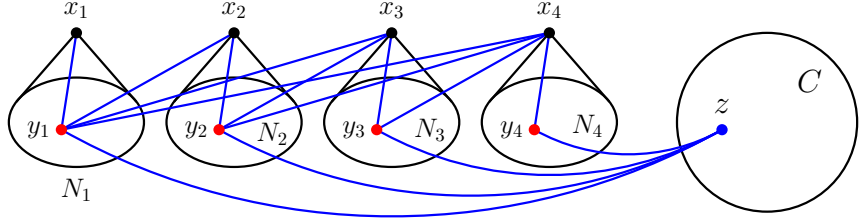}
  \caption{A multicover with a skewer}
  \label{fig:skewer}
\end{figure}

\begin{lemma}
\label{lem:hom-base}
For all\/ $m,n\geq 1$ and\/ $c_1,d\geq 0$, there are\/ $\ell,c'\geq 0$ such that the following holds.
Let\/ $G$ be a graph satisfying the following conditions:
\begin{itemize}
\item $\chi(N(v))\leq c_1$ for every vertex\/ $v$ of\/ $G$;
\item $\chi(C)\leq d$ for every independent multicover\/ $(\set{x_i}_{i\in I},\set{N_i}_{i\in I},C)$ with\/ $\size{I}=m$ in\/ $G$.
\end{itemize}
Let\/ $\calM=(\set{x_i}_{i\in[\ell]},\set{N_i}_{i\in[\ell]},C)$ be a multicover in\/ $G$ such that\/ $G$ contains no\/ $\calM_I$-skewer for any index set\/ $I\subseteq[\ell]$ with\/ $\size{I}=n$.
Then\/ $\chi(C)\leq c'$.
\end{lemma}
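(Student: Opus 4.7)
The plan is to prove the lemma by induction on $n$.

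For the base case $n = 1$, I would take $\ell = 1$ and $c' = 0$. A multicover of length $1$ in which there is no $\calM_{\set{1}}$-skewer forces $N_1 \cap N(z) = \emptyset$ for every $z \in C$, and combined with the covering requirement of a multicover this yields $C = \emptyset$, hence $\chi(C) = 0$.

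For the induction step, assume the lemma for $n - 1$ with parameters $\ell_{n-1}$ and $c'_{n-1}$. The key construction is the following sub-multicover. For each $y \in N_1$, let $L(y) = \set{j > 1 \colon y \in N(x_j)}$ and $C(y) = C \cap N(y)$. Then $\calM^y = (\set{x_j}_{j \in L(y)}, \set{N_j}_{j \in L(y)}, C(y))$ is a multicover, and it has no skewer of length $n - 1$: any $\calM^y_I$-skewer $(\set{y_j}_{j \in I}, z)$ with $I \subseteq L(y)$, $\size{I} = n - 1$, and $z \in C(y)$ extends, by prepending $y$ as the witness for index $1$, to a $\calM_{\set{1} \cup I}$-skewer of length $n$ in $\calM$ (verifying: $y \in N_1$, $y \in N(z)$ since $z \in C(y)$, and $y \in N(x_j)$ for all $j \in I \subseteq L(y)$), contradicting the hypothesis. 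By the inductive hypothesis, whenever $\size{L(y)} \geq \ell_{n-1}$ we obtain $\chi(C(y)) \leq c'_{n-1}$.

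To finish, I plan to combine these local bounds. Partition $C$ into $C^{\mathrm{big}}$, consisting of $z \in C$ having at least one neighbor $y \in N_1$ with $\size{L(y)} \geq \ell_{n-1}$, and $C^{\mathrm{small}} = C \setminus C^{\mathrm{big}}$. Using $\chi(N_1) \leq \chi(N(x_1)) \leq c_1$, properly $c_1$-color the induced subgraph on $N_1^{\mathrm{big}} = \set{y \in N_1 \colon \size{L(y)} \geq \ell_{n-1}}$, and partition $C^{\mathrm{big}}$ according to the color of a canonically chosen witness $y(z) \in N_1^{\mathrm{big}} \cap N(z)$; within each color class the witnesses form an independent set, and I aim to combine the bounds $\chi(C(y)) \leq c'_{n-1}$ into a bound on $\chi(C^{\mathrm{big}})$. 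The set $C^{\mathrm{small}}$, whose vertices have only ``small-$L$'' neighbors in $N_1$, would be handled by an analogous argument anchored at some $x_i$ with $i > 1$ (re-running the sub-multicover construction from a different starting index), or by reducing to the independent-multicover hypothesis to contribute the bound $d$.

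The hard part will be the combining step. Even when $\chi(C(y)) \leq c'_{n-1}$ holds for each relevant $y$ individually, the sets $C(y)$ can overlap in complicated ways and $\size{N_1^{\mathrm{big}}}$ is not a priori bounded, so a naive summation over $y$ fails. The crux is to leverage the independence of color classes in a $c_1$-coloring of $N_1$, together with the multicover structure (every $z \in C$ sits at distance exactly $2$ from $x_1$ via $N_1$), to stitch the local colorings $\phi_y$ into a global proper coloring of $C^{\mathrm{big}}$ using only $O(c_1 \cdot c'_{n-1})$ colors. I expect this to require a careful canonical choice of witness together with an iterative or Ramsey-type selection, with $\ell$ chosen polynomially in $\ell_{n-1}$, $m$, and $c_1$ so that in the complementary case $C^{\mathrm{small}}$ one can extract an independent sub-multicover of length $m$ and apply the second hypothesis.
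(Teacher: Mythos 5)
There is a genuine gap: the ``combining step'' that you yourself flag as the hard part is not a technicality but the entire content of the lemma, and the mechanism you propose for it does not work. Within one color class of your $c_1$-coloring of $N_1^{\mathrm{big}}$, the witnesses $y$ are pairwise non-adjacent, but that gives no control over edges of $G[C]$ \emph{between} $C(y)$ and $C(y')$ for distinct witnesses $y\neq y'$: if $z\in C(y)$, $z'\in C(y')$, $zz'\in E(G)$, and $\phi_y(z)=\phi_{y'}(z')$, the stitched coloring is improper, and nothing in the multicover structure rules this out. This is the standard obstruction to turning a bound on $\chi(N(y))$-type local sets into a bound on $\chi(C)$, and independence of the centers $y$ does not remove it. The treatment of $C^{\mathrm{small}}$ is likewise only sketched: knowing that every witness $y\in N_1\cap N(z)$ has $\size{L(y)}<\ell_{n-1}$ constrains only the index $1$, whereas an independent multicover with index set $J$ requires \emph{every} covering set $N_i$ with $i\in J$ to be anticomplete to all later $x_j$ with $j\in J$, so a single ``re-anchoring'' at another index does not suffice. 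Your base case and your skewer-extension observation (prepending $y$ to an $\calM^y_I$-skewer to get an $\calM_{\set{1}\cup I}$-skewer) are correct, but they only set up the induction; the proof is not completed.

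For comparison, the paper avoids any stitching of local colorings by a single per-vertex Ramsey argument rather than an induction on $n$. Choose $\ell$ by Ramsey so that every $Q\subseteq\binom{[\ell]}{2}$ contains either an $n$-clique or an $m$-anticlique. For each $v\in C$ pick $y_i\in N_i\cap N(v)$ for all $i\in[\ell]$ and let $Q$ record the pairs $i<j$ with $y_i\in N(x_j)$; an $n$-clique in $Q$ yields an $\calM_I$-skewer (excluded), so there is an $m$-set $J(v)$ on which all these adjacencies fail. Grouping the vertices of $C$ by $J(v)$ gives at most $\binom{\ell}{m}$ parts, and each part $C_J$, together with $N'_i=N_i\setminus\bigcup_{j\in J,\,j>i}N(x_j)$ for $i\in J$, forms an \emph{independent} multicover of length $m$, so $\chi(C_J)\leq d$ by hypothesis and $\chi(C)\leq\binom{\ell}{m}d$. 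Note that this route reduces everything to the independent-multicover hypothesis and never needs to merge colorings of overlapping neighborhoods (indeed it does not even use the hypothesis $\chi(N(v))\leq c_1$). If you want to salvage your inductive scheme, you would need to replace the stitching step by a reduction of this kind.
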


For a marked graph $F$ and a multicover $\calN=(\set{x_i}_{i\in I},\set{N_i}_{i\in I},C)$ in $G$, an \emph{$\calN$-homomorph} of $F$ in $G$ is a family $\set{\phi_i}_{i\in I}$ where
\begin{itemize}
\item for every $i\in I$, $\phi_i$ is a homomorphism $F\to G$ such that
\[\phi_i(V_A(F))\subseteq N_i\cap\bigcap_{j\in I,\,j\geq i}N(x_j)\qquad\text{and}\qquad\phi_i(V_N(F))\subseteq C{;}\]
\item all $\phi_i$ with $i\in I$ agree on $V_N(F)$, that is, their restrictions to $V_N(F)$ are equal.
\end{itemize}
The \emph{root} of such an $\calN$-homomorph of $F$ is the set $\bigcup_{i\in I}\phi_i(V_A(F))$.
Observe that $\set{\phi_i}_{i\in I}$ is an $\calN$-homomorph of $B'_1$ in $G$ if and only if $(\set{y_i}_{i\in I},z)$ is an $\calN$-skewer in $G$, where $y_i=\phi_i(a)$ and $z=\phi_i(b)$ for all $i\in I$ (the latter not depending on $i$, by the second condition above).
A multicover $\calM=(\set{x_i}_{i\in[\ell]},\set{N_i}_{i\in[\ell]},C)$ in $G$ is \emph{$n$-clean} if for every set $I\subseteq[\ell]$ with $1\leq\size{I}\leq n$, there is no $\calM_I$-homomorph of $B'_{n-\size{I}+1}$ in $G$.

\begin{lemma}
\label{lem:hom-step}
For all\/ $n\geq 1$, $f\colon\setN\to\setN$, and\/ $\ell,c_1,c_2\geq 0$, there is\/ $c\geq 0$ such that the following holds.
Let\/ $G$ be a graph satisfying the following conditions:
\begin{itemize}
\item there is no homomorphism\/ $B'_n\to G$;
\item $G$ is\/ $(f,2)$-controlled;
\item $\chi(N(v))\leq c_1$ for every vertex\/ $v$ of\/ $G$;
\item $\chi(C')\leq c_2$ for every\/ $n$-clean multicover\/ $(\set{x_i}_{i\in[\ell+1]},\set{N_i}_{i\in[\ell+1]},C')$ in\/ $G$.
\end{itemize}
Then\/ $\chi(C)\leq c$ for every\/ $n$-clean multicover\/ $(\set{x_i}_{i\in[\ell]},\set{N_i}_{i\in[\ell]},C)$ in\/ $G$.
\end{lemma}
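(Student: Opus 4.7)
The plan is to argue by contradiction, taking $c := f(c_1 + c_2 + 1)$. Suppose an $n$-clean multicover $\calM = (\set{x_i}_{i \in [\ell]}, \set{N_i}_{i \in [\ell]}, C)$ has $\chi(C) > c$. Since $G$ is $(f, 2)$-controlled and $\chi(G[C]) > f(c_1 + c_2 + 1)$, some $v \in C$ satisfies $\chi(N^2_{G[C]}[v]) > c_1 + c_2 + 1$; combined with $\chi(N_{G[C]}[v]) \leq \chi(N(v)) + 1 \leq c_1 + 1$, this gives $\chi(N^2_{G[C]}(v)) > c_2$. Setting $x_{\ell+1} := v$, $N_{\ell+1} := N(x_{\ell+1}) \cap C$, and $C'' := N^2_{G[C]}(x_{\ell+1})$, one verifies routinely that $\calM' := (\set{x_i}_{i \in [\ell+1]}, \set{N_i}_{i \in [\ell+1]}, C'')$ is a multicover in $G$ with $\chi(C'') > c_2$.

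Next, I case-split on $\calM'$. If $\calM'$ is $n$-clean, the hypothesis gives $\chi(C'') \leq c_2$, a contradiction. Otherwise, there exist $I \subseteq [\ell+1]$ with $1 \leq \size{I} \leq n$ and an $\calM'_I$-homomorph $\set{\phi_i}_{i \in I}$ of $B'_{n-\size{I}+1}$ in $G$. Three subcases arise: (i) if $\ell+1 \notin I$, then $I \subseteq [\ell]$ and $C'' \subseteq C$, so $\set{\phi_i}_{i \in I}$ is already an $\calM_I$-homomorph of $B'_{n-\size{I}+1}$, contradicting $n$-cleanness of $\calM$; (ii) if $I = \set{\ell+1}$, then $\phi_{\ell+1}$ is itself a homomorphism $B'_n \to G$, contradicting the hypothesis; (iii) if $\ell+1 \in I$ and $\size{I} \geq 2$, I set $I' := I \setminus \set{\ell+1}$ and $s := n - \size{I} + 1$, and aim to construct an $\calM_{I'}$-homomorph of $B'_{s+1} = B'_{n-\size{I'}+1}$ in $G$, which (since $1 \leq \size{I'} \leq n-1$) would contradict $n$-cleanness of $\calM$.

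The construction in case (iii) is the technical crux. Using the recursive description of $B'_{s+1}$---the base $B'_s$ together with, for each $a \in A := V_A(B'_s)$, a bridge vertex $b^a$ and an attached copy $B^a \cong B'_s$, with $b^a$ adjacent to $N_{B'_s}(a) \cup V_A(B^a)$---I would set $\psi_i|_{B'_s} := \phi_i|_{B'_s}$ for each $i \in I'$, and $\psi_i(b^a) := \phi_{\ell+1}(a)$ (which lies in $N_{\ell+1} \subseteq C$, independent of $i$). The required adjacency of $\psi_i(b^a)$ to $\psi_i(N_{B'_s}(a))$ follows because all $\phi_i$ with $i \in I$ agree on $V_N(B'_s)$ and $\phi_{\ell+1}$ is a homomorphism.

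The hard part---and the main obstacle---is defining $\psi_i|_{B^a}$ so that $\psi_i(V_A(B^a)) \subseteq N_i \cap \bigcap_{j \in I',\, j \geq i} N(x_j) \cap N(\phi_{\ell+1}(a))$ while the images on $V_N(B^a)$ remain independent of $i$. The natural candidate $\psi_i|_{B^a} := \phi_i$ (via the canonical isomorphism $B^a \to B'_s$) meets every constraint except adjacency to $\phi_{\ell+1}(a)$: that last requirement amounts to forcing a biclique between the marked images of $\phi_i$ in $N_i$ and those of $\phi_{\ell+1}$ in $N_{\ell+1}$ inside the common neighborhood $N(x_{\ell+1})$. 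Overcoming this---whether by a Ramsey-type refinement of the choice of $x_{\ell+1}$ at the extraction step, by collapsing the marked vertices of $B^a$ onto common neighbors of $\phi_{\ell+1}(a)$, or by iterating the extraction to accumulate the required adjacencies---is where the bulk of the substantive work lies.
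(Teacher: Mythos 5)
There is a genuine gap, and you have located it yourself: the construction of $\psi_i$ on the copies $B^a$ is exactly the crux, and your proposal leaves it unresolved. The paper's mechanism for overcoming it is not a Ramsey refinement of the choice of $x_{\ell+1}$ nor an iteration of the extraction; it is a \emph{type partition of $C$ performed before any vertex is extracted}, built on the notion of the \emph{root} of a homomorph (the set $\bigcup_{i\in I}\phi_i(V_A(F))$). Concretely, for each $v\in C$ one records the set $\Phi(v)$ of all index sets $J\subseteq[\ell]$ with $1\leq\size{J}\leq n-1$ such that $G$ contains an $\calM_J$-homomorph of $B'_{n-\size{J}}$ whose root lies inside $N(v)$, and one partitions $C$ into the $2^h$ classes $C_\Phi=\set{v\in C\colon\Phi(v)=\Phi}$ (whence the constant is $2^hf(c_1+c_2+1)$, not your $f(c_1+c_2+1)$). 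The extraction of $x_{\ell+1}$ and the second neighbourhood are then taken inside $G[C_\Phi]$, so that $N_{\ell+1}\subseteq C_\Phi$ and every vertex $v^a=\phi_{\ell+1}(a)$ has the \emph{same} type $\Phi$ as $x_{\ell+1}$.

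This is what dissolves your obstacle in case (iii): the restriction $\set{\phi_i}_{i\in I'}$ is itself an $\calM_{I'}$-homomorph of $B'_s$ rooted in $N(x_{\ell+1})$, so $I'\in\Phi(x_{\ell+1})=\Phi=\Phi(v^a)$ for every $a$; hence for each $a$ there exists a \emph{fresh} $\calM_{I'}$-homomorph $\set{\phi^a_i}_{i\in I'}$ of a copy of $B'_s$ whose root is already contained in $N(v^a)$. One sets $\psi_i$ to be $\phi_i$ on the base copy, $\phi^a_i$ on $B^a$, and $b^a\mapsto v^a$ — there is no need to force a biclique between the marked images of $\phi_i$ and those of $\phi_{\ell+1}$, because the attached copies are not reused from $\phi_i$ at all. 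Your candidate $\psi_i|_{B^a}:=\phi_i$ cannot be repaired within your framework: the adjacency of $\phi_i(V_A(B'_s))$ to $\phi_{\ell+1}(a)$ is simply not available, and without the root-type bookkeeping there is no supply of homomorphs anchored at the individual vertices $v^a$. Since this partition and the accompanying definition of $\Phi$ carry essentially all of the weight of the lemma, the proposal as it stands is an accurate outline of the difficulty rather than a proof.
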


\begin{proof}[Proof of \Cref{lem:hom-main}]
Let $n=3\cdot 2^{r-1}-2$.
By \Cref{lem:burling}~\ref{item:burling-2}, every graph $G$ with $\gamma(G)\leq r$ is $B'_n$-free.
For any $\ell,c_2\geq 0$, let $f_\ell(c_2)$ be the constant $c$ claimed by \Cref{lem:hom-step} for $n,f,\ell,c_1,c_2$.
Now, let $\ell$ and $c'$ be the constants claimed by \Cref{lem:hom-base} for $t,n,k,c_1$.
We show that the lemma holds with $c=(f_1\circ\cdots\circ f_\ell)(c')$.

Let $G$ be a graph as in the lemma.
We prove that for every $j\in\set{0,1,\ldots,\ell}$ and every $n$-clean multicover $(\set{x_i}_{i\in[j]},\set{N_i}_{i\in[j]},C)$ in $G$, we have $\chi(C)\leq(f_{j+1}\circ\cdots\circ f_\ell)(c')$.
The proof of the claim goes by downward induction on $j$.
For the base case of $j=\ell$, the $n$-cleanness of the multicover $\calM=(\set{x_i}_{i\in[\ell]},\set{N_i}_{i\in[\ell]},C)$ implies that $G$ contains no $\calM_I$-skewer for any set $I\subseteq[\ell]$ with $\size{I}=n$, so \Cref{lem:hom-base} gives $\chi(C)\leq c'$, as required.
For the induction step with $0\leq j<\ell$, \Cref{lem:hom-step} applied to the $n$-clean multicover $(\set{x_i}_{i\in[k]},\set{N_i}_{i\in[k]},C)$ with $(f_{j+2}\circ\cdots\circ f_\ell)(c')$ playing the role of $c_2$ gives $\chi(C)\leq(f_{j+1}\circ\cdots\circ f_\ell)(c')$, as required.
Now, for $j=0$ and the trivial $n$-clean multicover with empty index set and $C=V(G)$, we conclude that $\chi(G)\leq(f_1\circ\cdots\circ f_\ell)(c')=c$.
\end{proof}

\begin{proof}[Proof of \Cref{lem:hom-base}]
By Ramsey's theorem, there is $\ell\geq 1$ such that for every set $Q\subseteq\smash{\binom{[\ell]}{2}}$, one of the following conditions hold:
\begin{enumeratea}
\item\label{item:hom-base-a} there is a set $I\subseteq[\ell]$ with $\size{I}=n$ and $\smash{\binom{I}{2}}\subseteq Q$;
\item\label{item:hom-base-b} there is a set $J\subseteq[\ell]$ with $\size{J}=m$ and $\smash{\binom{J}{2}}\subseteq\smash{\binom{[\ell]}{2}}\setminus Q$.
\end{enumeratea}
We show that the lemma holds with $\ell$ and $c'=\smash{\binom{\ell}{m}}d$.

Let $G$ be a graph and $\calM$ be a multicover in $G$ as in the statement of the lemma.
Consider a vertex $v\in C$.
For every $i\in[\ell]$, since $N_i$ covers $C$, there is a vertex $y_i\in N_i\cap N(v)$.
By the choice of $\ell$, one of the conditions \ref{item:hom-base-a}, \ref{item:hom-base-b} above holds for the set $Q$ of pairs $\set{i,j}\in\smash{\binom{[\ell]}{2}}$ with $i<j$ and $y_i\in N(x_j)$.
If \ref{item:hom-base-a} holds, then $(\set{y_i}_{i\in I},v)$ is an $\calM_I$-skewer, contradicting the last assumption of the lemma.
So \ref{item:hom-base-b} holds; let $J(v)$ denote the resulting set $J$.
Now, for each set $J\in\smash{\binom{[\ell]}{m}}$, let $C_J=\set{v\in C\colon J(v)=J}$.
The sets $C_J$ partition $C$.
Fix some $J\in\smash{\binom{[\ell]}{m}}$.
For each $i\in J$, let $N'_i=N_i\setminus\bigcup_{j\in J,j>i}N(x_j)$.
For every $v\in C_J$ and every $i\in J$, since $\set{\set{i,j}\colon j\in J,\:j>i}\subseteq J(v)$, there is a vertex $y_i\in N_i\cap N(v)\setminus\bigcup_{j\in I,j>i}N(x_j)=N'_i\cap N(v)$.
It follows that $(\set{x_i}_{i\in J},\set{N'_i}_{i\in J},C_J)$ is an independent multicover in $G$, which implies $\chi(C_J)\leq d$.
We conclude that
\[\chi(C)\leq\sum_{J\in\binom{[\ell]}{m}}\chi(C_J)\leq\tbinom{\ell}{m}d=c'{.}\qedhere\]
\end{proof}

\begin{proof}[Proof of \Cref{lem:hom-step}]
Let
\vspace*{-\medskipamount}
\[h=\sum_{k=1}^{n-1}\tbinom{\ell}{k}{,}\qquad c=2^hf(c_1+c_2+1){.}\]
Let $G$ be a graph as in the statement of the lemma, and let $\calM=(\set{x_i}_{i\in[\ell]},\set{N_i}_{i\in[\ell]},C)$ be an $n$-clean multicover in $G$.
We aim to prove that $\chi(C)\leq c$.

Let $\calH=\set{J\subseteq[\ell]\colon 1\leq\size{J}\leq n-1}$.
Thus $\size{\calH}=h$.
For each $v\in C$, let $\Phi(v)$ comprise the sets $J\in\calH$ such that there is an $\calM_J$-homomorph of $\smash[b]{B'_{n-\size{J}}}$ in $G$ whose root is a subset of $N(v)$.
For each $\Phi\subseteq\calH$, let $C_\Phi=\set{v\in C\colon\Phi(v)=\Phi}$.
The sets $C_\Phi$ partition $C$.
We prove that for every $\Phi\subseteq\calH$ and every vertex $v\in C$, we have $\chi(\smash[b]{N^2_{G[C_\Phi]}}(v))\leq c_2$.
Then, since $G$ is $(f,2)$-controlled and $\chi(N(v))\leq c_1$, we can conclude that $\chi(C_\Phi)\leq f(c_1+c_2+1)$ for every $\Phi\subseteq\calH$, which implies
\[\chi(C)\leq\sum_{\Phi\subseteq H}\chi(C_\Phi)\leq 2^hf(c_1+c_2+1)=c{.}\]

Fix some $\Phi\subseteq\calH$ and some vertex $x_{\ell+1}$ in $C_\Phi$.
Let $N_{\ell+1}=N_{G[C_\Phi]}(x_{\ell+1})$ and $C'=\smash[b]{N^2_{G[C_\Phi]}}(x_{\ell+1})$.
Let $\calM'=(\set{x_i}_{i\in[\ell+1]},\set{N_i}_{i\in[\ell+1]},C')$.
Since $\calM$ is a multicover in $G$, $\set{x_{\ell+1}}\cup N_{\ell+1}\cup C'\subseteq C$, $x_{\ell+1}$ is complete to $N_{\ell+1}$ and anticomplete to $C'$, and $N_{\ell+1}$ covers $C'$, it follows that $\calM'$ is a multicover in $G$.
It remains to prove that $\calM'$ is $n$-clean, because then the last assumption of the lemma implies $\chi(C')\leq c_2$, as required.

To prove that $\calM'$ is $n$-clean, we need to verify that for every set $I\subseteq[\ell+1]$ with $1\leq\size{I}\leq n$, there is no $\calM'_I$-homomorph of $\smash[b]{B'_{n-\size{I}+1}}$ in $G$.
For sets $I\subseteq[\ell]$ with $1\leq\size{I}\leq n$, the property follows from the $n$-cleanness of $\calM$, as an $\calM'_I$-homomorph is an $\calM_I$-homomorph.
For the set $I=\set{\ell+1}$, the property follows from the assumption that there is no homomorphism $B'_n\to G$.
It remains to consider subsets $I\subseteq[\ell+1]$ with $2\leq\size{I}\leq n$ and $\ell+1\in I$.
For the rest of the proof, let $I$ be such a subset.
Let $J=I\cap[\ell]$ and $k=n-\size{I}+1=n-\size{J}$.

Now comes the key point in the argument.
Suppose towards a contradiction that there is an $\calM'_I$-homomorph $\set{\phi_i}_{i\in I}$ of $B'_k$ in $G$.
It follows that the family $\set{\phi_i}_{i\in J}$, which is $\set{\phi_i}_{i\in I}$ with $\phi_{\ell+1}$ removed, is an $\calM_J$-homomorph of $B'_k$ in $G$ whose root is a subset of $N(x_{\ell+1})$ (where the latter follows from the first condition in the definition of $\calM'_I$-homomorph).
Therefore, $J\in\Phi(x_{\ell+1})=\Phi$, by the definition of $\Phi(x_{\ell+1})$ and the assumption that $x_{\ell+1}\in C_\Phi$.
Let $A=V_A(B'_k)$, and recall that the marked graph $B'_{k+1}$ is constructed from $B'_k$ by adding, for every $a\in A$, an isomorphic copy $B^a$ of $B'_k$ (on a separate vertex set) and a new vertex $b^a$ that becomes connected to $\smash[b]{N_{B'_k}}(a)\cup V_A(B^a)$ by new edges.
For every $a\in A$, let $v^a=\phi_{\ell+1}(a)$; since $v^a\in N_{\ell+1}\subseteq C_\Phi$, we have $\Phi(v^a)=\Phi\ni J$, so (since $B^a$ is isomorphic to $B'_k$) there is an $\calM_J$-homomorph $\set{\phi^a_i}_{i\in J}$ of $B^a$ in $G$ whose root is a subset of $N(v^a)$.
For every $i\in J$, let
\[\psi_i=\phi_i\cup\bigcup_{a\in A}\phi^a_i\cup\set{b^a\mapsto v^a\colon a\in A}{.}\]

We claim that $\set{\psi_i}_{i\in J}$ is an $\calM_J$-homomorph of $B'_{k+1}$ in $G$, and the inductive definition of $B'_{k+1}$ is exactly what we need to infer that.
We need to verify the following conditions:
\begin{itemize}
\item for every $i\in J$, $\psi_i$ is a homomorphism $B'_{k+1}\to G$ such that
\[\psi_i(V_A(B'_{k+1}))\subseteq N_i\cap\bigcap_{j\in J,\,j\geq i}N(x_j)\qquad\text{and}\qquad\psi_i(V_N(B'_{k+1}))\subseteq C{;}\]
\item all $\psi_i$ with $i\in J$ agree on $V_N(B'_{k+1})$, that is, their restrictions to $V_N(B'_{k+1})$ are equal.
\end{itemize}
By the definition of $\set{\psi_i}_{i\in J}$ and the fact that $V_A(B'_{k+1})=A\cup\bigcup_{a\in A}V_A(B^a)$, the respective conditions on $\set{\phi_i}_{i\in J}$ and $\set{\phi^a_i}_{i\in J}$ directly imply the conditions above except for the condition that for all $i\in J$ and $a\in A$, $\psi_i$ maps the ``new vertex'' $b^a$ into $C$ and maps the ``new edges'' connecting $b^a$ with $\smash[b]{N_{B'_k}}(a)\cup V_A(B^a)$ to edges of $G$; this is what remains to be verified.
Thus, let $i\in J$ and $a\in A$.
Since $\psi_i(b^a)=v^a=\phi_{\ell+1}(a)\in N_{\ell+1}\subseteq C$ and $\phi_{\ell+1}$ is a homomorphism $B'_k\to G$, we have $\psi_i(b^a)\in C$ and
\[\psi_i(N_{B'_k}(a))=\phi_i(N_{B'_k}(a))=\phi_{\ell+1}(N_{B'_k}(a))\subseteq N(v^a){,}\]
where the second equality holds because $A$ is an independent set and therefore $\smash[b]{N_{B'_k}}(a)\subseteq V_N(B'_k)$.
Since the root of $\set{\phi^a_i}_{i\in J}$ is a subset of $N(v^a)$, we have
\begin{gather*}
\psi_i(V_A(B^a))=\phi^a_i(V_A(B^a))\subseteq N(v^a){,}\\
\psi_i(N_{B'_{k+1}}(b^a))=\psi_i(N_{B'_k}(a))\cup\psi_i(V_A(B^a))\subseteq N(v^a){.}
\end{gather*}

We have shown that $\set{\psi_i}_{i\in J}$ is an $\calM_J$-homomorph of $B'_{k+1}$ in $G$.
This contradicts the assumption that $\calM$ is $n$-clean.
Thanks to this contradiction, we conclude that $\calM'$ is an $n$-clean multicover in $G$, which implies $\chi(C')\leq c_2$.
\end{proof}

\section{Burling graphs}
\label{sec:burling}

In this section, we prove that the same class of Burling graphs that is defined using Burling's sequence $(B_k)_{k\geq 1}$ can be defined using the sequence $(B'_k)_{k\geq 1}$ (\Cref{lem:burling}) or yet a different sequence $(\burling{k})_{k\geq 1}$ of marked graphs that we are to define.

We start by defining some operations on marked graphs.
For two marked graphs $G$ and $H$, we let marked graphs $\Sub(G,H)$, $\Sub^*(G,H)$, and $\Ext(G,H)$ be defined as follows.
Let $A=V_A(G)$.
For each $a\in A$, let $H^a$ be an isomorphic copy of $H$ on a new vertex set, and let $b^a$ be a new vertex, so that the sets $V(G)$, $V(H^a)$ for $a\in A$, and $\set{b^a\colon a\in A}$ are pairwise disjoint.
Let
\begin{gather*}
V(\Sub(G,H))=V_N(G)\cup\bigcup_{a\in A}V(H^a){,}\qquad
V_A(\Sub(G,H))=\bigcup_{a\in A}V_A(H^a){,}\\
E(\Sub(G,H))=E_N(G)\cup\bigcup_{a\in A}\Bigl(E(H^a)\cup N_G(a)V_A(H^a)\Bigr){,} \displaybreak[0]\\
V(\Sub^*(G,H))=V(G)\cup\bigcup_{a\in A}V(H^a){,}\qquad
V_A(\Sub^*(G,H))=A\cup\bigcup_{a\in A}V_A(H^a){,}\\
E(\Sub^*(G,H))=E(G)\cup\bigcup_{a\in A}\Bigl(E(H^a)\cup N_G(a)V_A(H^a)\Bigr){,} \displaybreak[0]\\
V(\Ext(G,H))=V(G)\cup\bigcup_{a\in A}V(H^a)\cup\set{b^a\colon a\in A}{,}\qquad
V_A(\Ext(G,H))=A\cup\bigcup_{a\in A}V_A(H^a){,}\\
E(\Ext(G,H))=E(G)\cup\bigcup_{a\in A}\Bigl(E(H^a)\cup N_G(a)\set{b^a}\cup\set{b^a}V_A(H^a)\Bigr){.}
\end{gather*}
It is clear that $\Sub(G,H)\subseteq\Sub^*(G,H)$, and if $G\subseteq G'$ and $H\subseteq H'$, then $\Sub(G,H)\subseteq\Sub(G',H')$, $\Sub^*(G,H)\subseteq\Sub^*(G',H')$, and $\Ext(G,H)\subseteq\Ext(G',H')$.
Also, $B'_{k+1}=\Ext(B'_k,B'_k)$ for $k\geq 1$, by definition.

\begin{figure}
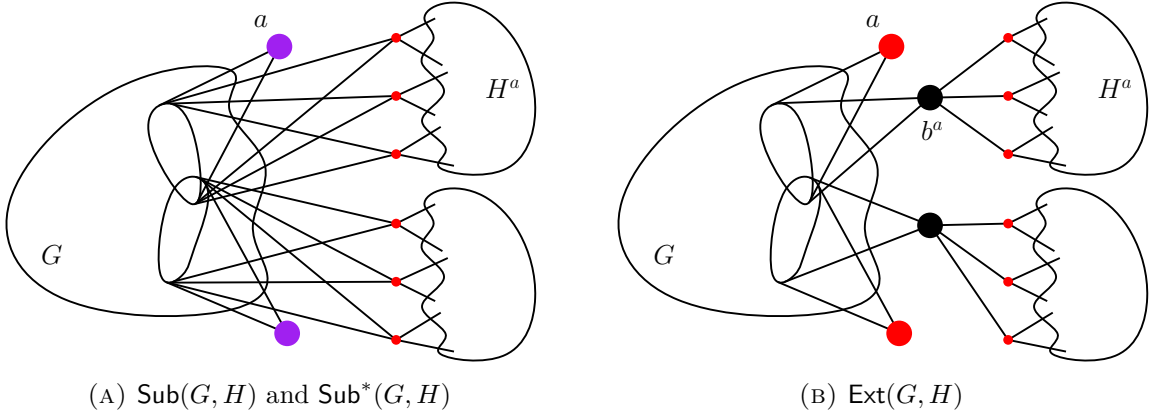

  \begin{subfigure}{0.48\textwidth}
    \centering
    \includegraphics[page=3,scale=0.6]{./figures.pdf}
    \caption{$\Sub(G,H)$ and $\Sub^*(G,H)$}
  \end{subfigure}
  \begin{subfigure}{0.48\textwidth}
    \centering
    \includegraphics[page=4,scale=0.6]{./figures.pdf}
    \caption{$\Ext(G,H)$}
  \end{subfigure}
  \caption{
    Schematic depiction of the operations on marked graphs.
    Vertices in $V_A(\Sub(G,H))$ and $V_A(\Ext(G,H))$ are depicted in red.
    Vertices in $V_A(\Sub^*(G,H))$ are the red and the purple ones, where the latter are not vertices of $\Sub(G,H)$.
  }
\end{figure}

The next four lemmas follow easily from the definitions above.

\begin{lemma}
\label{lem:burling-subext}
For\/ $k\geq 2$, we have\/ $B_{k+1}=\Sub(B_k,\Ext(B_k,B_1))$.
\end{lemma}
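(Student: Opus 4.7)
The plan is to verify the identity $B_{k+1} = \Sub(B_k, \Ext(B_k, B_1))$ by a direct comparison of vertex sets, marked sets, and edge sets on the two sides; no new combinatorial idea is needed, only careful bookkeeping across several indexing levels ($a \in A := V_A(B_k)$, the inner marked vertices $v \in A^a$, and the two pendant vertices per $v$). I would first compute $\Ext(B_k, B_1)$ explicitly: since $B_1$ is a single marked vertex with no edges, each copy of $B_1$ attached at $a \in A$ contributes exactly one new marked vertex, and the $\Ext$ construction also introduces one unmarked vertex $b^a$ adjacent to $N_{B_k}(a)$ and to that copy. Thus $\Ext(B_k, B_1)$ is $B_k$ with these pendant gadgets attached at every $a \in A$, and its marked set consists of $A$ together with the new marked pendant vertices.

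Next I would unfold $\Sub(B_k, \Ext(B_k, B_1))$. For each $a \in A$ one takes a fresh copy $E^a$ of $\Ext(B_k, B_1)$; inside $E^a$ sits an isomorphic copy of $B_k$, which I identify with the graph $B^a$ from the definition of $B_{k+1}$ (so its marked set becomes $A^a$), and for each $v \in A^a$ the two gadget vertices attached at $v$ inside $E^a$ are identified with the pair $(b^a_v, c^a_v)$ of $B_{k+1}$, with $c^a_v$ being the marked one. Under this renaming, $V(E^a) = V(B^a) \cup \set{b^a_v, c^a_v \colon v \in A^a}$, $V_A(E^a) = A^a \cup \set{c^a_v \colon v \in A^a}$, and $E(E^a) = E(B^a) \cup \bigcup_{v \in A^a}\bigl(N_{B^a}(v)\set{b^a_v} \cup \set{b^a_v c^a_v}\bigr)$. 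The definition of $\Sub$ then gives vertex set $V_N(B_k) \cup \bigcup_{a \in A} V(E^a)$, marked set $\bigcup_{a \in A} V_A(E^a)$, and edge set $E_N(B_k) \cup \bigcup_{a \in A}\bigl(E(E^a) \cup N_{B_k}(a)\, V_A(E^a)\bigr)$, where $N_{B_k}(a)\, V_A(E^a) = \bigcup_{v \in A^a} N_{B_k}(a)\set{v, c^a_v}$ after expanding $V_A(E^a) = A^a \cup \set{c^a_v \colon v \in A^a}$.

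A term-by-term comparison with the definition of $B_{k+1}$ finishes the proof: the vertex sets and marked sets coincide by construction, and the edge sets decompose identically into $E_N(B_k)$, the internal edges $E(B^a)$, the star edges $N_{B^a}(v)\set{b^a_v}$, the pendant edges $\set{b^a_v c^a_v}$, and the ``lifted'' edges $N_{B_k}(a)\set{v, c^a_v}$. The only genuine hazard is notational: both $\Sub$ and $\Ext$ introduce fresh vertices labelled with a superscript $a$, and each carries an internal copy of $B_k$, so a consistent renaming (as above) must be fixed before writing out the comparison. Once this is done, the identity is immediate, with no combinatorial content to verify beyond the set-theoretic matching of the two descriptions.
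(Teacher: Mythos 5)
Your proposal is correct and follows essentially the same route as the paper's proof: compute $\Ext(\cdot,B_1)$ explicitly (one unmarked vertex $b^a_v$ and one marked vertex $c^a_v$ per marked vertex $v$), then unfold the $\Sub$ operation and match vertex sets, marked sets, and edge sets term by term against the inductive definition of $B_{k+1}$. The only cosmetic difference is that the paper applies $\Ext(H^a,B_1)$ directly to each copy $H^a$ rather than taking copies of $\Ext(B_k,B_1)$, which is the same computation up to your renaming convention.
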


\begin{proof}
Let $G=B_k$ and $A=V_A(G)$.
For each $a\in A$, let $H^a$ be an isomorphic copy of $B_k$ on a new vertex set, and let $A^a=V_A(H^a)$.
For each $a\in A$ and each $v\in A^a$, let $b^a_v$ and $c^a_v$ be two new vertices.
For every $a\in A$, we have
\begin{gather*}
V(\Ext(H^a,B_1))=V(H^a)\cup\set{b^a_v,c^a_v\colon v\in A^a}{,}\qquad
V_A(\Ext(H^a,B_1))=A^a\cup\set{c^a_v\colon v\in A^a}{,}\\
E(\Ext(H^a,B_1))=E(H^a)\cup\bigcup_{v\in A^a}\Bigl(N_{H^a}(v)\set{b^a_v}\cup\set{b^a_vc^a_v}\Bigr){.}
\end{gather*}
By definition, we have
\begin{gather*}
\begin{aligned}
V(B_{k+1})&=V(G-A)\cup\bigcup_{a\in A}\Bigl(V(H^a)\cup\set{b^a_v,c^a_v\colon v\in A^a}\Bigr)\\
&=V(G-A)\cup\bigcup_{a\in A}V(\Ext(H^a,B_1))=V\bigl(\Sub(B_k,\Ext(B_k,B_1))\bigr){,}
\end{aligned}\\
V_A(B_{k+1})=\bigcup_{a\in A}\Bigl(A^a\cup\set{c^a_v\colon v\in A^a}\Bigr)=\bigcup_{a\in A}V_A(\Ext(H^a,B_1))=V_A\bigl(\Sub(B_k,\Ext(B_k,B_1))\bigr){,}\\
\begin{aligned}
E(B_{k+1})&=E(G-A)\cup\bigcup_{a\in A}\biggl(E(H^a)\cup\bigcup_{v\in A^a}\Bigl(\set{b^a_vc^a_v}\cup N_{H^a}(v)\set{b^a_v}\cup N_G(a)\set{v,c^a_v}\Bigr)\biggr)\\
&=E(G-A)\cup\bigcup_{a\in A}\Bigl(E(\Ext(H^a,B_1))\cup N_G(a)V_A(\Ext(H^a,B_1))\Bigr)\\
&=E\bigl(\Sub(B_k,\Ext(B_k,B_1))\bigr){.}
\end{aligned}
\end{gather*}
Thus $B_{k+1}=\Sub(B_k,\Ext(B_k,B_1))$.
\end{proof}

\begin{lemma}
\label{lem:ext}
For any marked graphs\/ $G$ and\/ $H$, if\/ $V_A(H)\neq\emptyset$, then\/ $\Ext(G,H)\subseteq\Sub(\Ext(G,B_1),H)$.
\end{lemma}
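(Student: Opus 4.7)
The plan is to exhibit $\Ext(G,H)$ as an induced marked subgraph of $\Sub(\Ext(G,B_1),H)$ through an explicit vertex embedding; the hypothesis $V_A(H)\neq\emptyset$ is used exactly once, to fix a vertex $h^*\in V_A(H)$. Writing $A=V_A(G)$, the construction of $\Ext(G,B_1)$ introduces, for each $a\in A$, a new marked vertex $v^a$ adjacent to a new unmarked vertex $b^a$, with $b^a$ further adjacent to $N_G(a)$; its marked set is $A\cup\{v^a:a\in A\}$. Passing to $\Sub(\Ext(G,B_1),H)$ then removes these marked vertices and replaces each by a fresh copy of $H$ whose marked set is complete to the corresponding neighborhood in $\Ext(G,B_1)$. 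Denote these substitution copies by $\widehat H^a$ (replacing $a$, with $V_A(\widehat H^a)$ complete to $N_G(a)$) and $\widehat H^{v^a}$ (replacing $v^a$, with $V_A(\widehat H^{v^a})$ complete to $\{b^a\}$); the vertices $b^a$, being unmarked in $\Ext(G,B_1)$, survive into $\Sub(\Ext(G,B_1),H)$.

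With this setup, I would define $\phi\colon V(\Ext(G,H))\to V(\Sub(\Ext(G,B_1),H))$ as follows: each $v\in V_N(G)$ and each $b^a$ from the construction of $\Ext(G,H)$ is identified with its namesake in $\Sub(\Ext(G,B_1),H)$; the copy $H^a$ attached to $b^a$ in $\Ext(G,H)$ is mapped isomorphically onto $\widehat H^{v^a}$; and the marked vertex $a\in A$ of $\Ext(G,H)$ is sent to $(h^*)^a$, the image of $h^*$ inside $V_A(\widehat H^a)$. The marked-set condition is immediate: the marked vertices of $\Sub(\Ext(G,B_1),H)$ that lie in $\phi(V(\Ext(G,H)))$ are exactly $\{(h^*)^a:a\in A\}\cup\bigcup_{a\in A}V_A(\widehat H^{v^a})$, which is $\phi(V_A(\Ext(G,H)))$.

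The main step---and the only genuinely delicate point---is checking that $\phi$ is an induced embedding. The edges within $V_N(G)$, the edges inside each copy $H^a$, the edges $N_G(a)\{b^a\}$, and the edges $\{b^a\}V_A(H^a)$ of $\Ext(G,H)$ translate directly to edges of $\Sub(\Ext(G,B_1),H)$, and the edges between $a$ and $N_G(a)$ are realized through the new edges $N_G(a)V_A(\widehat H^a)$ at the vertex $(h^*)^a$. The only possible source of trouble is a spurious adjacency at the ``isolated'' image vertex $(h^*)^a$: its full neighborhood in $\Sub(\Ext(G,B_1),H)$ is $N_{\widehat H^a}((h^*)^a)\cup N_G(a)$, but the first set lies entirely outside $\phi(V(\Ext(G,H)))$, since only $(h^*)^a$ is taken from $\widehat H^a$. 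Hence within the image, $(h^*)^a$ has precisely the neighborhood $N_G(a)$, matching that of $a$ in $\Ext(G,H)$; in particular $(h^*)^a$ is non-adjacent to every $b^{a'}$ and to every vertex of $\widehat H^{v^{a'}}$, as required. Beyond this bookkeeping the verification is mechanical, and I foresee no further obstruction.
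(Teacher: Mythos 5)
Your proof is correct and follows essentially the same route as the paper's: both identify each marked vertex $a$ of $G$ with a single marked vertex of the copy of $H$ substituted for $a$ in $\Sub(\Ext(G,B_1),H)$ (the one place where $V_A(H)\neq\emptyset$ is used), map the copy of $H$ attached to $b^a$ in $\Ext(G,H)$ onto the copy substituted for the auxiliary marked vertex $c^a$, and observe that the rest of $\widehat H^a$ lies outside the image so no spurious adjacencies arise. The only difference is presentational: the paper phrases the identification implicitly and compares vertex, marked, and edge sets directly, whereas you make the embedding $\phi$ explicit.
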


\begin{proof}
For each $a\in A$, let $b^a$ and $c^a$ be two new vertices.
We have
\begin{gather*}
V(\Ext(G,B_1))=V(G)\cup\set{b^a,c^a\colon a\in A}{,}\qquad
V_A(\Ext(G,B_1))=A\cup\set{c^a\colon a\in A}{,}\\
E(\Ext(G,B_1))=E(G)\cup\bigcup_{a\in A}\Bigl(N_G(a)\set{b^a}\cup\set{b^ac^a}\Bigr){.}
\end{gather*}
For each $a\in A$, let $H^a$ be an isomorphic copy of $H$ on a new vertex set, and let $V^a=V_A(H^a)$.
Since $V_A(\Ext(G,B_1))=A\cup\set{c^a\colon a\in A}$, the construction of $\Sub(\Ext(G,B_1),H)$ involves $2\size{A}$ isomorphic copies of $H$: one for each vertex $c^a$ with $a\in A$, which we take to be $H^a$, and one for each vertex $a\in A$, say $\hat H^a$, for which we just identify $a$ with an arbitrary vertex in $V_A(\hat H^a)$ making use of the assumption that $V_A(H)\neq\emptyset$.
Thus, we have
\begin{gather*}
\begin{aligned}
V(\Ext(G,H))&=V(G)\cup\set{b^a\colon a\in A}\cup\bigcup_{a\in A}V(H^a)\\
&=V_N(\Ext(G,B_1))\cup A\cup\bigcup_{a\in A}V(H^a)\subseteq V\bigl(\Sub(\Ext(G,B_1),H)\bigr){,}
\end{aligned}\displaybreak[0]\\
V_A(\Ext(G,H))=A\cup\bigcup_{a\in A}A^a=V_A\bigl(\Sub(\Ext(G,B_1),H)\bigr)\cap V(\Ext(G,H)){,}\displaybreak[0]\\
\begin{aligned}
E(\Ext(G,H))&=E(G)\cup\bigcup_{a\in A}N_G(a)\set{b^a}\cup\bigcup_{a\in A}\Bigl(E(H^a)\cup\set{b^a}A^a\Bigr)\\
&=E_N(\Ext(G,B_1))\cup\bigcup_{a\in A}N_G(a)\set{a}\cup\bigcup_{a\in A}\Bigl(E(H^a)\cup\set{b^a}A^a\Bigr)\\
&=E\bigl(\,\Sub(\Ext(G,B_1),H)\,\bigl[\,V(\Ext(G,H))\,\bigr]\,\bigr){.}
\end{aligned}
\end{gather*}
Thus $\Ext(G,H)$ is an induced subgraph of $\Sub(\Ext(G,B_1),H)$.
\end{proof}

\begin{lemma}
\label{lem:sub*-base}
For every marked graph\/ $K$, we have\/ $\Sub^*(B_2,K)\subseteq\Ext(B_2,\Ext(B_2,K))$.
\end{lemma}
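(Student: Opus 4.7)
The plan is to exhibit a specific induced marked subgraph of $\Ext(B_2,\Ext(B_2,K))$ that is isomorphic to $\Sub^*(B_2,K)$. The guiding observation is that in $L:=\Ext(B_2,K)$, both the inner apex of $B_2$ and every vertex of $V_A(K^a)$ are apex vertices of $L$. Consequently, the ``new'' vertex $b'$ adjoined in the outer $\Ext$ step of $\Ext(B_2,L)$ is simultaneously adjacent to the inner apex and to every vertex of $V_A(K^a)$ — which is exactly the role played by $b$ in $\Sub^*(B_2,K)$ relative to $a$ and $V_A(K^a)$.

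To make this precise, I would first unwind the definitions. Write $V(B_2)=\set{a,b}$ with $V_A(B_2)=\set{a}$ and $E(B_2)=\set{ba}$, so that $\Sub^*(B_2,K)$ is the marked graph on $\set{a,b}\cup V(K^a)$ with apex set $\set{a}\cup V_A(K^a)$ and edge set $\set{ba}\cup E(K^a)\cup\set{b}V_A(K^a)$, for a fresh copy $K^a$ of $K$. Next, denote the outer copy of $B_2$ in $\Ext(B_2,L)$ by $\set{a_1,b_1}$ with apex $a_1$, and write the fresh isomorphic copy $L^{a_1}$ of $L$ on vertices $\set{\alpha,\beta}\cup V(K^*)\cup\set{\widetilde b}$, where $a,b,V(K^a),b^a$ in $L$ correspond to $\alpha,\beta,V(K^*),\widetilde b$ respectively. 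The definition of $\Ext$ then adds one further vertex $b'$ adjacent to $b_1$ (since $N_{B_2}(a_1)=\set{b_1}$) and to every vertex of $V_A(L^{a_1})=\set{\alpha}\cup V_A(K^*)$. Thus the edge set of $\Ext(B_2,L)$ is
\[\set{b_1a_1,\,\beta\alpha,\,\beta\widetilde b,\,b_1b',\,b'\alpha}\cup E(K^*)\cup\set{\widetilde b}V_A(K^*)\cup\set{b'}V_A(K^*){,}\]
and its apex set is $\set{a_1,\alpha}\cup V_A(K^*)$.

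I would then take $S:=\set{\alpha,b'}\cup V(K^*)$ and verify that the induced marked subgraph $\Ext(B_2,L)[S]$ is isomorphic to $\Sub^*(B_2,K)$ via $a\mapsto\alpha$, $b\mapsto b'$, and a bijection $V(K^a)\to V(K^*)$ that preserves apices of $K$. Inspecting the edge list above, the edges of $\Ext(B_2,L)$ with both endpoints in $S$ are precisely $b'\alpha$, the edges of $E(K^*)$, and the edges in $\set{b'}V_A(K^*)$, matching the three groups of edges of $\Sub^*(B_2,K)$ under the bijection; the remaining edges $\beta\alpha$, $\beta\widetilde b$, $\set{\widetilde b}V_A(K^*)$, $b_1a_1$, $b_1b'$ are excluded because $\beta,\widetilde b,b_1,a_1$ all lie outside $S$. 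Intersecting the apex set of $\Ext(B_2,L)$ with $S$ gives $\set{\alpha}\cup V_A(K^*)$, matching $V_A(\Sub^*(B_2,K))$ under the bijection, so $S$ is indeed the vertex set of an induced marked subgraph. There is no conceptual obstacle; the only delicate point is careful bookkeeping of the three disjoint ``fresh'' vertex sets introduced by the two nested $\Ext$ operations to confirm that no spurious edge is induced on $S$.
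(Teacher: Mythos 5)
Your proposal is correct and identifies exactly the same induced copy of $\Sub^*(B_2,K)$ inside $\Ext(B_2,\Ext(B_2,K))$ as the paper does: in the paper's notation your set $S=\set{\alpha,b'}\cup V(K^*)$ is $V(K')\cup\set{b}$, where $K'$ is the subgraph of the inner $\Ext(B_2,K)$ induced on $V(K)$ together with the apex of the inner $B_2$. The only difference is organizational --- the paper factors the verification through the intermediate marked graph $K'$ and the monotonicity of $\Ext$ in its second argument, while you unwind both $\Ext$ steps and check the induced edge and apex sets directly --- so this is essentially the same argument.
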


\begin{proof}
Let $a,b\notin V(K)$.
Up to isomorphism, the subgraph $K'$ of $\Ext(B_2,K)$ induced on $V(K)\cup\set{a}$ has $V_A(K')=V_A(K)\cup\set{a}$ and $E(K')=E(K)$.
Also up to isomorphism, we have
\begin{gather*}
V(\Sub^*(B_2,K))=V(K)\cup\set{a,b}=V(K')\cup\set{b}{,}\qquad
V_A(\Sub^*(B_2,K))=V_A(K)\cup\set{a}=V_A(K'){,}\\
E(\Sub^*(B_2,K))=\set{ba}\cup E(K)\cup\set{b}V_A(K)=E(K')\cup\set{b}V_A(K'){.}
\end{gather*}
It follows that $\Sub^*(B_2,K)$ is the subgraph of $\Ext(B_2,K')$ induced on $V(K')\cup\set{b}$, so it is an induced subgraph of $\Ext(B_2,\Ext(B_2,K))$.
\end{proof}

\begin{lemma}
\label{lem:sub*-ind}
For any marked graphs\/ $G$, $H$, and\/ $K$, we have
\[\Sub^*(\Ext(G,H),K)\subseteq\Ext(\Sub^*(G,K),\:\Sub^*(H,K)){.}\]
\end{lemma}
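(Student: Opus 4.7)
My plan is to verify the inclusion by exhibiting an explicit identification of $\Sub^*(\Ext(G,H),K)$ with an induced marked subgraph of $\Ext(\Sub^*(G,K),\Sub^*(H,K))$. Set $A=V_A(G)$. On the left-hand side, the inner $\Ext$ introduces new vertices $b^a$ and copies $H^a$ of $H$ for $a\in A$, and the outer $\Sub^*$ then attaches a copy of $K$ to every marked vertex of $\Ext(G,H)$, that is, to every $a\in A$ (call this copy $\bar K^a$) and to every $v\in V_A(H^a)$ (call this copy $\bar K^v$). On the right-hand side, $\Sub^*(G,K)$ contains copies $K^a$ of $K$ for $a\in A$, and the outer $\Ext$ attaches, through a new vertex $b^m$, a copy of $\Sub^*(H,K)$ to every $m\in V_A(\Sub^*(G,K))=A\cup\bigcup_{a\in A}V_A(K^a)$. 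Inside each $\Sub^*(H,K)^m$ I write $H^m$ for the $H$-subcopy and $\tilde K^{m,v}$ for the $K$-subcopy attached to the marked vertex $v\in V_A(H^m)$.

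The identification I will use acts as the identity on $V(G)$ and on every $b^a$ for $a\in A$; for each $a\in A$ it identifies $H^a$ on the left with $H^a\subseteq\Sub^*(H,K)^a$ on the right, identifies $\bar K^a$ on the left with $K^a\subseteq\Sub^*(G,K)$ on the right, and for each $v\in V_A(H^a)$ identifies $\bar K^v$ on the left with $\tilde K^{a,v}\subseteq\Sub^*(H,K)^a$ on the right. The remaining copies $\Sub^*(H,K)^m$ and vertices $b^m$ with $m\in V_A(K^a)$ on the right are simply not in the image.

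It then remains to check that the induced marked subgraph on the image coincides with $\Sub^*(\Ext(G,H),K)$, which is a direct edge-by-edge comparison. The edges of $G$ appear on both sides; the edges $E(H^a)\cup N_G(a)\set{b^a}\cup\set{b^a}V_A(H^a)$ produced by the inner $\Ext(G,H)$ match the $E(H^a)$-edges inside $\Sub^*(H,K)^a$ together with the outer-$\Ext$ edges on the right, using $N_{\Sub^*(G,K)}(a)=N_G(a)$; the edges $E(\bar K^a)\cup N_G(a)V_A(\bar K^a)$ produced for $a\in A$ by the outer $\Sub^*$ on the left match the inner-$\Sub^*$ contribution $E(K^a)\cup N_G(a)V_A(K^a)$ of $\Sub^*(G,K)$ on the right; and the edges $E(\bar K^v)\cup(N_{H^a}(v)\cup\set{b^a})V_A(\bar K^v)$ for $v\in V_A(H^a)$, which arise on the left because $N_{\Ext(G,H)}(v)=N_{H^a}(v)\cup\set{b^a}$, split on the right into the $\Sub^*$-edges $E(\tilde K^{a,v})\cup N_{H^a}(v)V_A(\tilde K^{a,v})$ inside $\Sub^*(H,K)^a$ together with the subset $\set{b^a}V_A(\tilde K^{a,v})$ of the outer-$\Ext$ edges $\set{b^a}V_A(\Sub^*(H,K)^a)$. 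The equality of the marked sets follows along the same lines.

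The only obstacle is a bookkeeping subtlety that also explains the shape of the right-hand side: the edges $\set{b^a}V_A(\bar K^v)$ produced by the outer $\Sub^*$ on the left have to be recovered on the right not from a $\Sub^*$-operation but from the outer $\Ext$'s $b^a$-edges hitting the full marked set of $\Sub^*(H,K)^a$, which is precisely why the second argument of the outer $\Ext$ on the right is $\Sub^*(H,K)$ rather than $H$ alone.
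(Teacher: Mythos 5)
Your proof is correct and takes essentially the same route as the paper's: the identical identification (the $K$-copies attached to $a\in A$ land in $\Sub^*(G,K)$, those attached to $v\in V_A(H^a)$ land in $\Sub^*(H,K)^a$, and the copies of $\Sub^*(H,K)$ hanging off $V_A(K^a)$ are omitted from the induced subgraph), together with the same key computation that $N_{\Ext(G,H)}(v)=N_{H^a}(v)\cup\set{b^a}$, so the edges at $V_A(\bar K^v)$ split between the inner $\Sub^*$ and the outer $\Ext$'s edges $\set{b^a}V_A(\Sub^*(H,K)^a)$. Nothing further is needed.
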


\begin{proof}
Let $A=V_A(G)$.
For each $a\in A$, let $H^a$ and $K^a$ be isomorphic copies of $H$ and $K$, respectively, on new vertex sets, let $A^a=V_A(H^a)$, and let $b^a$ be a new vertex.
For each $a\in A$ and $v\in V^a$, let $K^a_v$ be an isomorphic copy of $K$ on a new vertex set.
Let
\[H^*=\bigcup_{a\in A}H^a{,}\qquad K^*=\bigcup_{a\in A}K^a{,}\qquad K^a_*=\bigcup_{v\in A^a}K^a_v\quad\text{for each }a\in A{,}\qquad K^*_*=K^*\cup\bigcup_{a\in A}K^a_*{.}\]
Now, $\Ext(G,H)$ is the graph with
\begin{gather*}
V(\Ext(G,H))=V(G)\cup V(H^*)\cup\set{b^a\colon a\in A}{,}\qquad
V_A(\Ext(G,H))=A\cup V_A(H^*){,}\\
E(\Ext(G,H))=E(G)\cup E(H^*)\cup\bigcup_{a\in A}\Bigl(N_G(a)\set{b^a}\cup\set{b^a}A^a\Bigr){,}
\end{gather*}
$\Sub^*(G,K)$ is the graph with
\begin{gather*}
V(\Sub^*(G,K))=V(G)\cup V(K^*){,}\qquad
V_A(\Sub^*(G,K))=A\cup V_A(K^*){,}\\
E(\Sub^*(G,K))=E(G)\cup E(K^*)\cup\bigcup_{a\in A}N_G(a)V_A(K^a){,}
\end{gather*}
and for each $a\in A$, $\Sub^*(H^a,K)$ is the graph with
\begin{gather*}
V(\Sub^*(H^a,K))=V(H^a)\cup V(K^a_*){,}\qquad
V_A(\Sub^*(H^a,K))=A^a\cup V_A(K^a_*){,}\\
E(\Sub^*(H^a,K))=E(H^a)\cup E(K^a_*)\cup\bigcup_{v\in A^a}N_{H^a}(v)V_A(K^a_v){.}
\end{gather*}
Now, since $A\subseteq V_A(\Sub^*(G,K))$, we have
\begin{align*}
V\bigl(\Sub^*(\Ext(G,H),K)\bigr)&=V(\Ext(G,H))\cup V(K^*_*)=V(G)\cup V(H^*)\cup\set{b^a\colon a\in A}\cup V(K^*_*)\\
&=V(\Sub^*(G,K))\cup\bigcup_{a\in A}V(\Sub^*(H^a,K))\cup\set{b^a\colon a\in A}\\
&\subseteq V\bigl(\Ext(\Sub^*(G,K),\:\Sub^*(H,K))\bigr){.}
\end{align*}
We also have
\begin{align*}
V_A\bigl(\Sub^*(\Ext(G,H),K)\bigr)&=V_A(\Ext(G,H))\cup V_A(K^*_*)=A\cup V_A(H^*)\cup V_A(K^*_*)\\
&=V_A(\Sub^*(G,K))\cup\bigcup_{a\in A}V_A(\Sub^*(H^a,K))\\
&=V_A\bigl(\Ext(\Sub^*(G,K),\:\Sub^*(H,K))\bigr)\cap V\bigl(\Sub^*(\Ext(G,H),K)\bigr){.}
\end{align*}
Finally, since $N_{\Ext(G,H)}(v)=N_{H^a}(v)\cup\set{b^a}$ for each $a\in A$ and each $v\in A^a$, we have
\begin{align*}
&E\bigl(\Sub^*(\Ext(G,H),K)\bigr)\\
&\quad=E(\Ext(G,H))\cup E(K^*_*)\cup\bigcup_{a\in A}\biggl(N_{\Ext(G,H)}(a)V_A(K^a)\cup\bigcup_{v\in A^a}N_{\Ext(G,H)}(v)V_A(K^a_v)\biggr)\\
&\quad=E(G)\cup E(H^*)\cup E(K^*_*)\\
&\quad\qquad\cup\bigcup_{a\in A}\biggl(N_G(a)\set{b^a}\cup\set{b^a}A^a\cup N_G(a)V_A(K^a)\cup\bigcup_{v\in A^a}\bigl(N_{H^a}(v)\cup\set{b^a}\bigr)V_A(K^a_v)\biggr)\\
&\quad=E(\Sub^*(G,K))\cup\bigcup_{a\in A}\Bigl(E(\Sub^*(H^a,K))\cup N_G(a)\set{b^a}\cup\set{b^a}\bigl(A^a\cup V_A(K^a_*)\bigr)\Bigr)\\
&\quad=E\bigl(\,\Ext(\Sub^*(G,K),\:\Sub^*(H,K))\,\bigl[\,V(\Sub^*(\Ext(G,H),K))\,\bigr]\,\bigr){.}
\end{align*}
This shows that $\Sub^*(\Ext(G,H),K)$ is an induced subgraph of $\Ext(\Sub^*(G,K),\:\Sub^*(H,K))$.
\end{proof}

The main induction in the proof of \Cref{lem:burling} is covered by the following lemma.

\begin{lemma}
\label{lem:sub*-burling}
For all\/ $m,n\geq 1$, we have\/ $\Sub^*(B'_m,B'_n)\subseteq B'_{m+n+1}$.
\end{lemma}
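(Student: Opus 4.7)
The plan is to prove this by induction on $m$, with $n$ taken arbitrarily at each stage (that is, the statement ``for every\/ $n\geq 1$, $\Sub^*(B'_m,B'_n)\subseteq B'_{m+n+1}$'' is proved by induction on\/ $m$). The only ingredients needed besides the definitions are the previously established monotonicity of\/ $\Ext$, \Cref{lem:sub*-base}, \Cref{lem:sub*-ind}, and the fact that $(B'_k)_{k\geq 1}$ is an increasing sequence of marked graphs, so in particular $B'_1\subseteq B'_n$ for every\/ $n\geq 1$.

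For the base case\/ $m=1$, I recall that\/ $B'_1=B_2$ and apply \Cref{lem:sub*-base} with\/ $K=B'_n$ to obtain
\[\Sub^*(B'_1,B'_n)=\Sub^*(B_2,B'_n)\subseteq\Ext(B_2,\Ext(B_2,B'_n)){.}\]
Now I use $B_2=B'_1\subseteq B'_n$ together with the monotonicity of\/ $\Ext$ twice: first, $\Ext(B_2,B'_n)\subseteq\Ext(B'_n,B'_n)=B'_{n+1}$; second, $\Ext(B_2,B'_{n+1})\subseteq\Ext(B'_{n+1},B'_{n+1})=B'_{n+2}$. Chaining these induced-subgraph inclusions yields\/ $\Sub^*(B'_1,B'_n)\subseteq B'_{n+2}=B'_{1+n+1}$, as required.

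For the inductive step, I assume the claim holds for\/ $m$ and any\/ $n$, and I aim to establish it for\/ $m+1$. Using the definition\/ $B'_{m+1}=\Ext(B'_m,B'_m)$ and applying \Cref{lem:sub*-ind} with\/ $G=H=B'_m$ and\/ $K=B'_n$, I get
\[\Sub^*(B'_{m+1},B'_n)=\Sub^*(\Ext(B'_m,B'_m),B'_n)\subseteq\Ext\bigl(\Sub^*(B'_m,B'_n),\:\Sub^*(B'_m,B'_n)\bigr){.}\]
By the inductive hypothesis, both arguments of the outer\/ $\Ext$ are induced marked subgraphs of\/ $B'_{m+n+1}$, so by monotonicity of\/ $\Ext$ the right-hand side is an induced marked subgraph of\/ $\Ext(B'_{m+n+1},B'_{m+n+1})=B'_{m+n+2}=B'_{(m+1)+n+1}$. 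This closes the induction.

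There is no real obstacle here: all structural work has already been done in \Cref{lem:sub*-base,lem:sub*-ind}, and the monotonicity of\/ $\Ext$ in both arguments reduces everything to bookkeeping. The only subtle point is that the base case truly requires\/ \emph{two} nested applications of\/ $\Ext$ (matching the two steps $B'_n \rightsquigarrow B'_{n+1} \rightsquigarrow B'_{n+2}$), which is exactly why \Cref{lem:sub*-base} is stated with a double\/ $\Ext$ on the right-hand side and why the target index is\/ $m+n+1$ rather than\/ $m+n$.
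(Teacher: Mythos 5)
Your proof is correct and follows essentially the same route as the paper's: induction on $m$ with $n$ arbitrary, \Cref{lem:sub*-base} plus monotonicity of $\Ext$ and $B'_1\subseteq B'_n$ for the base case, and \Cref{lem:sub*-ind} plus the inductive hypothesis and monotonicity for the step. The only difference is cosmetic — the paper collapses your two monotonicity applications in the base case into the single chain $\Ext(B'_1,\Ext(B'_1,B'_n))\subseteq\Ext(B'_{n+1},\Ext(B'_n,B'_n))=B'_{n+2}$.
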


\begin{proof}
We prove this by induction on $m$.
For the base case of $m=1$, by \Cref{lem:sub*-base}, we have
\[\Sub^*(B'_1,B'_n)\subseteq\Ext(B'_1,\Ext(B'_1,B'_n))\subseteq\Ext(B'_{n+1},\Ext(B'_n,B'_n))=B'_{n+2}{.}\]
For the induction step, suppose $m\geq 1$ and $\Sub^*(B'_m,B'_n)\subseteq B'_{m+n+1}$.
By \Cref{lem:sub*-ind}, we have
\begin{align*}
\Sub^*(B'_{m+1},B'_n)&=\Sub^*(\Ext(B'_m,B'_m),B'_n)\subseteq\Ext(\Sub^*(B'_m,B'_n),\:\Sub^*(B'_m,B'_n))\\
&\subseteq\Ext(B'_{m+n+1},B'_{m+n+1})=B'_{m+n+2}{.}\qedhere
\end{align*}
\end{proof}

We are ready to prove \Cref{lem:burling}, which we restate below.

\lemburling*

\begin{proof}
First, we prove \ref{item:burling-1} by induction on $k$.
We have $B'_1=B_2$ by definition.
Now, suppose $k\geq 1$ and $B'_k\subseteq B_{2k}$.
By \Cref{lem:burling-subext} and the fact that $G\subseteq\Ext(G,H)$ and $G,H\subseteq\Sub(G,H)$ whenever $V_A(G)\neq\emptyset$ and $V_A(H)\neq\emptyset$, we have
\[B'_k\subseteq B_{2k}\subseteq B_{2k+1}\subseteq\Ext(B_{2k+1},B_1){,}\qquad\Ext(B'_k,B_1)\subseteq\Ext(B_{2k},B_1)\subseteq B_{2k+1}{.}\]
Consequently, by \Cref{lem:ext}, we have
\[B'_{k+1}=\Ext(B'_k,B'_k)\subseteq\Sub(\Ext(B'_k,B_1),B'_k)\subseteq\Sub(B_{2k+1},\Ext(B_{2k+1},B_1))=B_{2k+2}{.}\]
This completes the induction step.

Now, we prove \ref{item:burling-2} by induction on $k$.
We have $B_2=B'_1$ by definition.
Now, suppose $k\geq 2$ and $B_k\subseteq\smash[b]{B'_{3\cdot 2^{k-2}-2}}$.
We have
\[\Ext(B_k,B_1)\subseteq\Ext(B'_{3\cdot 2^{k-2}-2},B'_{3\cdot 2^{k-2}-2})=B'_{3\cdot 2^{k-2}-1}{.}\]
By \Cref{lem:burling-subext,lem:sub*-burling}, we have
\[B_{k+1}=\Sub(B_k,\Ext(B_k,B_1))\subseteq\Sub(B'_{3\cdot 2^{k-2}-2},B'_{3\cdot 2^{k-2}-1})\subseteq\Sub^*(B'_{3\cdot 2^{k-2}-2},B'_{3\cdot 2^{k-2}-1})\subseteq B'_{3\cdot 2^{k-1}-2}{.}\]
This completes the induction step.
\end{proof}

For the purpose of proving our main result, we need another (simpler) sequence $(\burling{k})_{k\geq 1}$ of marked graphs such that the induced subgraphs of the graphs $\burling{k}$ are exactly the Burling graphs.
To define it, we use one more operation on marked graphs.
For a marked graph $G$ and a vertex $a^*\in V_A(G)$, let a marked graph $\Dup(G,a^*)$ be defined as follows.
Let $G'$ be an isomorphic copy of $G$ on a new vertex set, and let $b^*$ be a new vertex.
Let
\begin{gather*}
V(\Dup(G,a^*))=V(G)\cup V(G')\cup\set{b^*}{,}\qquad
V_A(\Dup(G,a^*))=V_A(G)\cup V_A(G'){,}\\
E(\Dup(G,a^*))=E(G)\cup E(G')\cup N_G(a^*)\set{b^*}\cup\set{b^*}V_A(G'){.}
\end{gather*}
Observe that $\Dup(G,a^*)\subseteq\Ext(G,G)$ after identifying $b^*$ and $G'$ with the vertex $b^{a^*}$ and with the isomorphic copy $H^{a^*}$ of $G$ occurring in the definition of $\Ext(G,G)$.

\begin{lemma}
\label{lem:alt-burling}
There exist an increasing sequence\/ $(\burling{k})_{k\geq 1}$ of connected marked graphs and a sequence\/ $(a^*_k)_{k\geq 1}$ with\/ $a^*_k\in V_A(\burling{k})$ for all\/ $k\geq 1$ and with the following properties:
\begin{enumeratei}
\item\label{item:alt-1} $\burling{1}=B'_1=B_2$;
\item\label{item:alt-2} for every\/ $k\geq 1$, $\burling{k+1}=\Dup(\burling{k},a^*_k)$;
\item\label{item:alt-3} for every\/ $k\geq 1$, $\burling{k}\subseteq B'_k$;
\item\label{item:alt-4} for every\/ $k\geq 1$, there is\/ $\ell\geq 1$ such that\/ $B'_k\subseteq\burling{\ell}$;
\item\label{item:alt-5} for every\/ $k\geq 1$, $\size{V_A(\burling{k})}=2^{k-1}$ and\/ $\size{V_N(\burling{k})}=2^k-1$.
\end{enumeratei}
\end{lemma}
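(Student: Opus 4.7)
\medskip

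My plan is to define the sequence $(\burling{k})_{k\geq 1}$ and the distinguished vertices $a^*_k$ by a single inductive construction in which the $a^*_k$'s trace out the Burling construction $(B'_k)_{k\geq 1}$ step by step. Set $\burling{1} = B'_1 = B_2$ and, in parallel, build an induced-marked-subgraph embedding $\iota_k \colon B'_k \hookrightarrow \burling{\ell_k}$ for an increasing sequence $\ell_1 < \ell_2 < \cdots$, starting with $\iota_1$ the identity and $\ell_1 = 1$. For the step $k \to k+1$, enumerate $V_A(B'_k) = \{a_1,\ldots,a_m\}$ with $m = 2^{k-1}$, and for $j = 1,\ldots,m$ define
\[
a^*_{\ell_k + j - 1} = \iota_k(a_j), \qquad \burling{\ell_k + j} = \Dup(\burling{\ell_k + j - 1},\, a^*_{\ell_k + j - 1}),
\]
which is legal because $V_A(\cdot)$ is monotone under $\Dup$. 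Set $\ell_{k+1} = \ell_k + m$; by induction $\ell_k = 2^{k-1}$, so every position of the sequence gets a prescribed $a^*$.

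Properties \ref{item:alt-1} and \ref{item:alt-2} hold by construction, and property \ref{item:alt-5} is immediate from $\size{V_A(\Dup(G,a^*))} = 2\size{V_A(G)}$ and $\size{V_N(\Dup(G,a^*))} = 2\size{V_N(G)}+1$. Connectedness of $\burling{k}$ follows by induction: the inductive step uses that $\burling{k}$ is connected with at least two vertices (so $N_{\burling{k}}(a^*_k)\neq\emptyset$) and that the new vertex $b^*$ is joined to both $N_{\burling{k}}(a^*_k)$ and the nonempty set $V_A(\burling{k}')$. Property \ref{item:alt-3} is another induction: the base is definitional and the step invokes the observation $\Dup(G,a^*) \subseteq \Ext(G,G)$ noted in the paper together with monotonicity of $\Ext$ and $\burling{k} \subseteq B'_k$ to get $\burling{k+1} \subseteq \Ext(B'_k,B'_k) = B'_{k+1}$.

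The crux is property \ref{item:alt-4}. Write $G_0 = \burling{\ell_k}$ and $G_j = \Dup(G_{j-1}, \iota_k(a_j))$, so $G_j = \burling{\ell_k+j}$. Each $\Dup$ adds a fresh copy $G_{j-1}'$ of $G_{j-1}$ and a new vertex $b_j$ joined to $N_{G_{j-1}}(\iota_k(a_j)) \cup V_A(G_{j-1}')$. Inside $G_{j-1}'$ fix the natural copy $B^j$ of $B'_k$ arising from the copy of $\iota_k(B'_k) \subseteq G_0 \subseteq G_{j-1}$. The claim is that the induced marked subgraph of $G_m = \burling{\ell_{k+1}}$ on
\[
S \;=\; \iota_k(V(B'_k)) \;\cup\; \bigcup_{j=1}^m V(B^j) \;\cup\; \{b_1,\ldots,b_m\}
\]
is isomorphic to $\Ext(B'_k,B'_k) = B'_{k+1}$, with each $B^j$ playing the role of the copy $H^{a_j}$ and $b_j$ playing the role of $b^{a_j}$. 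The desired $\iota_{k+1}$ is then read off from this isomorphism.

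The hard part is the bookkeeping verification of this isomorphism, which I expect to occupy most of the written proof. One checks the $V_A$-status of every vertex of $S$ (all $b_j$ lie in $V_N$; the $V_A$-vertices of $B^j$ land in $V_A(G_{j-1}') \subseteq V_A(G_m)$), and then verifies the edge set by four adjacency computations: within each $B^j$ and within $\iota_k(B'_k)$ the induced subgraph is preserved because $\Dup$ never adds edges among previously existing vertices; there are no edges between $\iota_k(V(B'_k))$ and any $V(B^j)$, nor between $V(B^i)$ and $V(B^j)$ for $i \neq j$, because the only edges between the ``old'' and ``new'' halves of each $\Dup$ pass through the single vertex $b_{(\cdot)}$; finally, $b_j$ has neighbors in $S$ exactly $N_{B'_k}(a_j) \cup V_A(B^j)$, which uses the repeated fact that $N_{G_i}(\iota_k(a_j)) = N_{G_{i-1}}(\iota_k(a_j))$ for $i \neq j$ (because $V_A$ is independent in every $\burling{\ell}$, so $\iota_k(a_j) \notin N_{G_{i-1}}(\iota_k(a_i))$ and $\iota_k(a_j) \notin V_A(G_{i-1}')$). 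All of these reduce to the one structural fact that in $\Dup(G,a^*)$ the only edges joining $V(G)$ to $V(G')$ go through $b^*$; once this is invoked uniformly, the adjacency check goes through and the induction on $k$ closes.
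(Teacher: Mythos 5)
Your construction is correct and essentially identical to the paper's: both build the sequence $(\burling{k})_{k\geq 1}$ by applying $\Dup$ round by round along an enumeration of $V_A(B'_k)$, obtain \ref{item:alt-3} from the observation $\Dup(G,a^*)\subseteq\Ext(G,G)$ together with monotonicity of $\Ext$, and obtain \ref{item:alt-4} by assembling an induced marked copy of $\Ext(B'_k,B'_k)=B'_{k+1}$ out of $\iota_k(B'_k)$, the copies $B^j$ of $B'_k$ sitting inside the duplicated halves, and the new vertices $b_j$. One harmless slip: $\size{V_A(B'_k)}$ is not $2^{k-1}$ (that is $\size{V_A(\burling{k})}$; e.g.\ $\size{V_A(B'_3)}=6$), so $\ell_k\neq 2^{k-1}$ --- but nothing in your argument uses these values, since every index is covered simply because $\ell_{k+1}>\ell_k$.
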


\begin{proof}
Let $(m_k)_{k\geq 1}$ be the integer sequence defined inductively as follows: $m_1=1$ and $m_{k+1}=m_k+\size{V_A(B'_k)}$ for all $k\geq 1$.
We define the sequences $(\burling{k})_{k\geq 1}$ and $(a^*_k)_{k\geq 1}$ inductively in rounds, making sure that by the beginning of every round $r\geq 1$, we have defined $\burling{1},\ldots,\burling{m_r}$ and $a^*_1,\ldots,a^*_{m_r-1}$ so that they satisfy the following conditions:
\begin{enumeratei'}
\item\label{item:alt-1'} $\burling{1}=B'_1=B_2$;
\item\label{item:alt-2'} for every $k\in[m_r-1]$, $\burling{k+1}=\Dup(\burling{k},a^*_k)$;
\item\label{item:alt-3'} for every $k\in[m_r]$, $\burling{k}\subseteq B'_k$;
\item\label{item:alt-4'} for every $k\in[r]$, $B'_k\subseteq\burling{m_k}$.
\end{enumeratei'}
Statements \ref{item:alt-1}--\ref{item:alt-4} then follow directly from \ref{item:alt-1'}--\ref{item:alt-4'} while \ref{item:alt-5} follows from \ref{item:alt-1}, \ref{item:alt-2}, and the definition of $\Dup$ by straightforward induction.

Let $\burling{1}=B'_1=B_2$, as required in \ref{item:alt-1'}.
In round $r\geq 1$, we define $\burling{m_r+1},\ldots,\burling{m_{r+1}}$ and $a^*_{m_r},\ldots,a^*_{m_{r+1}-1}$.
Enumerate the vertices in $V_A(B'_r)$ as $a_0,\ldots,a_{s-1}$ where $s=\size{V_A(B'_r)}=m_{r-1}-m_r$.
For $i\in\set{0,\ldots,s-1}$, by induction, we let $a^*_{m_r+i}=a_i$, which is a vertex in $V_A(\burling{m_r+i})$ as $B'_r\subseteq\burling{m_r}\subseteq\burling{m_r+i}$, and we let $\burling{m_r+i+1}=\Dup(\burling{m_r+i},a_i)$.
This makes \ref{item:alt-2'} hold for the next round.
We also have the following, by induction:
\[\burling{m_r+i+1}=\Dup(\burling{m_r+i},a^*_{m_r+i})\subseteq\Ext(\burling{m_r+i},\burling{m_r+i})\subseteq\Ext(B'_{m_r+i},B'_{m_r+i})=B'_{m_r+i+1}{.}\]
So \ref{item:alt-3'} holds for the next round.

To prove \ref{item:alt-4'} for the next round, it suffices to show that $B'_{r+1}\subseteq\burling{m_{r+1}}$.
For every $i\in\set{0,\ldots,s-1}$, let $G'_i$ and $b_i$ denote the isomorphic copy $G'$ of $\burling{m_r+i}$ and the vertex $b^*$ used in the definition of $\burling{m_r+i+1}$ as $\Dup(\burling{m_r+i},a_i)$; also, let $H_i$ be an isomorphic copy of $B'_r$ identified as an induced marked subgraph of $G'_i$, which exists as $B'_r\subseteq\burling{m_r}\subseteq\burling{m_r+i}$.
Assume that the definition of $B'_{r+1}$ as $\Ext(B'_r,B'_r)$ uses $H_i$ as the isomorphic copy $H^{a_i}$ of $B'_r$ and uses $b_i$ as the vertex $b^{a_i}$ for every $i\in\set{0,\ldots,s-1}$.
We prove that for every $j\in\set{0,\ldots,s}$, $\burling{m_r+j}$ contains the following graph $F_j$ as an induced marked subgraph:
\begin{gather*}
V(F_j)=V(B'_r)\cup\bigcup_{i=0}^{j-1}\bigl(V(H_i)\cup\set{b_i}\bigr){,}\qquad V_A(F_j)=V_A(B'_r)\cup\bigcup_{i=0}^{j-1}V_A(H_i){,}\\
E(F_j)=E(B'_r)\cup\smash[t]{\bigcup_{i=0}^{j-1}}\Bigl(E(H_i)\cup N_A(a_i)\set{b_i}\cup\set{b_i}V_A(H_i)\Bigr){.}
\end{gather*}
This follows by induction on $j$: we have $F_0=B'_r\subseteq\burling{m_r}$, and for every $j\in\set{0,\ldots,s-1}$, if $F_j\subseteq\burling{m_r+j}$, then $F_{j+1}\subseteq\burling{m_r+j+1}$, by the fact that $H_j\subseteq G'_j$ and the definition of $\burling{m_r+j+1}$ as $\Dup(\burling{m_r+j},a_j)$ which uses $G'_j$ and $b_j$ as $G'$ and $b^*$.
In particular, the claim holds for $j=s$, so $B'_{r+1}=F_s\subseteq\burling{m_r+s}=\burling{m_{r+1}}$.
\end{proof}

\section{Main result}
\label{sec:main}

This section is devoted to the proof of \Cref{lem:main}, which then implies all our Burling-control results.
Let $(\burling{k})_{k\geq 1}$ and $(a^*_k)_{k\geq 1}$ be sequences claimed by \Cref{lem:alt-burling}.
By \Cref{lem:burling,lem:alt-burling}, every graph from Burling's sequence is an induced subgraph of $\burling{k}$ for sufficiently large $k$.
The proof uses the graphs $\burling{k}$ in a similar way as the proof of \Cref{lem:hom-main} uses the graphs $B'_k$.

For a clique $X$ in a graph $G$, let $N^1(X)$ denote the set of vertices in $V(G)\setminus X$ that are complete to $X$, and let $N^2(X)$ denote the set of vertices in $V(G)\setminus N^1(X)$ that are anticomplete to $X$ and have a neighbor in $N^1(X)$.
(If $N^1(X)=\emptyset$, then $N^2(X)=\emptyset$.)

Let $s\geq 1$ and $f\colon\setN\to\setN$.
A graph $G$ is \emph{$(f,s)$-clique-controlled} if the following holds for every induced subgraph $H$ of $G$: for every $c\geq 0$, if every $s$-clique $X$ in $H$ satisfies $\chi(N_H^2(X))\leq c$, then $\chi(H)\leq f(c)$.
An \emph{$s$-clique-multicover} in $G$ is a sequence
\[\calM=(W_0,\;X_1,N_{1,1},W_1,\;X_2,N_{1,2},N_{2,2},W_2,\;\ldots,\;X_\ell,N_{1,\ell},\ldots,N_{\ell,\ell},W_\ell)\]
of subsets of $V(G)$ with $W_0=V(G)$ and with the following properties in $G$:
\begin{itemize}
\item for every $i\in[\ell]$, $X_i$ is an $s$-clique complete to $N_{i,i}$ and anticomplete to $W_i$;
\item for every $i\in[\ell]$, $X_i\cup N_{i,i}\cup W_i\subseteq W_{i-1}$;
\item for every $i\in[\ell]$, $N_{i,i}\supseteq N_{i,i+1}\supseteq\cdots\supseteq N_{i,\ell}$;
\item for all $i$ and $j$ with $1\leq i<j\leq\ell$, $N_{i,j}$ covers $W_j$.
\end{itemize}
For such an $s$-clique-multicover $\calM$, the number $\ell$ is the \emph{length} of $\calM$, and $W(\calM)$ denotes the set $W_\ell$.
A pair of indices $i$ and $j$ with $1\leq i<j\leq\ell$ is
\begin{itemize}
\item\emph{independent} in $\calM$ if $N_{i,j}$ is anticomplete to some vertex in $X_j$;
\item\emph{skew} in $\calM$ if $N_{i,j}$ is complete to $X_j$, and $N_{i,j-1}\setminus N_{i,j}$ is anticomplete to $W_j$.
\end{itemize}
An $s$-clique-multicover $\calM$ of length $\ell$ is \emph{tidy} if every pair of indices $i$ and $j$ with $1\leq i<j\leq\ell$ is independent or skew in $\calM$.
Given a tidy $s$-clique-multicover $\calM$ of length $\ell$, let $\calS(\calM)$ denote the set of non-empty subsets $I$ of $[\ell]$ such that all pairs $i,j\in I$ with $i<j$ are skew in $\calM$.

An \emph{embedding} of a graph $F$ in $G$ is an injective map $\phi\colon V(F)\to V(G)$ such that for any distinct $a,b\in V(F)$, we have $\phi(a)\phi(b)\in E(G)$ if and only if $ab\in E(F)$.
The last property of $s$-clique-multicovers that we need concerns embeddings of the graphs $\burling{k}$ in $G$.
For an integer $k\geq 1$, a family $\set{N_i}_{i\in I}$ of pairwise disjoint subsets of $V(G)$ (where $I$ is a non-empty index set), and a set $W\subseteq V(G)$ disjoint from each set in $\set{N_i}_{i\in I}$, a \emph{$(\set{N_i}_{i\in I},W)$-embedding} of $\burling{k}$ in $G$ is a family $\set{\phi_i}_{i\in I}$ where
\begin{itemize}
\item $\phi_i$ is an embedding of $\burling{k}$ in $G[N_i\cup W]$ such that $\phi_i(V_A(\burling{k}))\subseteq N_i$ and $\phi_i(V_N(\burling{k}))\subseteq W$, for every $i\in I$;
\item all $\phi_i$ with $i\in I$ agree on $V_N(\burling{k})$, that is, their restrictions to $V_N(\burling{k})$ are equal;
\item $\bigcup_{i\in I}\phi_i(V_A(\burling{k}))$ is an independent set in $G$.
\end{itemize}
An $s$-clique-multicover $\calM$ is \emph{$n$-tidy} if it is tidy and for every set $I\in\calS(\calM)$ with $1\leq\size{I}\leq n$, there is no $(\set{N_{i,\max I}}_{i\in I},W_{\max I})$-embedding of $\burling{n-\size{I}+1}$ in $G$ (using the notation from the definition of $s$-clique-multicover).

\begin{lemma}
\label{lem:tidy-base}
Let\/ $s,n\geq 1$ and\/ $c_1,c_2\geq 0$.
Let\/ $G$ be a graph such that
\begin{itemize}
\item $\chi(N(v))\leq c_1$ for every vertex\/ $v$ of\/ $G$;
\item $\chi(N^2(X))\leq c_2$ for every\/ $(s+1)$-clique\/ $X$ in\/ $G$.
\end{itemize}
Let\/ $\calM$ be an\/ $n$-tidy\/ $s$-clique-multicover in\/ $G$ such that\/ $\max_{I\in\calS(\calM)}\size{I}>n$.
Then\/ $\chi(W(\calM))\leq(n-1)(c_1+c_2)$.
\end{lemma}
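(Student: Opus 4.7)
My plan is to exploit the hypothesis $\max_{I\in\calS(\calM)}\size{I}>n$ by fixing $I=\{i_1<\cdots<i_{n+1}\}\in\calS(\calM)$ of size $n+1$ and analysing $W_{i_{n+1}}\supseteq W(\calM)$.
For each $v\in W_{i_{n+1}}$ and each $a\in[n]$, the covering property of the multicover yields a neighbor $y_a(v)\in N_{i_a,i_{n+1}}\cap N(v)$.
Applying the $n$-tidiness assumption to the size-$n$ subset $I\setminus\{i_{n+1}\}\in\calS(\calM)$ (which forbids $(\{N_{i,i_n}\}_{i\in I\setminus\{i_{n+1}\}},W_{i_n})$-embeddings of $\burling{1}$), combined with the monotonicity $N_{i_a,i_{n+1}}\subseteq N_{i_a,i_n}$ and $W_{i_{n+1}}\subseteq W_{i_n}$, forces that for every such $v$ no choice of the $y_a(v)$ can make $\{y_1(v),\ldots,y_n(v)\}$ independent: some pair $y_a(v),y_b(v)$ with $a<b$ must be adjacent.

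Define $b(v)\in\{2,\ldots,n\}$ to be the smallest $b$ such that adjacent $y\in N_{i_a,i_{n+1}}\cap N(v)$ and $y'\in N_{i_b,i_{n+1}}\cap N(v)$ exist for some $a<b$; this partitions $W_{i_{n+1}}$ into the $n-1$ classes $W^{t}=\{v\colon b(v)=t+1\}$ for $t\in[n-1]$, and on each class, by minimality of $b(v)$, the ``lower'' neighbors $y_1(v),\ldots,y_t(v)$ can be picked pairwise non-adjacent.
The core step is to produce, for each $t\in[n-1]$, an $(s+1)$-clique $X^{(t)}$ in $G$ such that $W^{t}\subseteq X^{(t)}\cup N^1(X^{(t)})\cup N^2(X^{(t)})$.
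Since $N^1(X^{(t)})\subseteq N(x)$ for any $x\in X^{(t)}$ we have $\chi(N^1(X^{(t)}))\leq c_1$, and $\chi(N^2(X^{(t)}))\leq c_2$ by hypothesis, so $\chi(W^{t})\leq c_1+c_2$ and the conclusion follows by summing over the $n-1$ types.

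The main obstacle will be constructing $X^{(t)}$.
The natural candidate is $X^{(t)}=X_{i_{t+1}}\cup\{z_t\}$ with $z_t\in N^1(X_{i_{t+1}})$, for instance $z_t\in N_{i_{t+1},i_{t+1}}$, which is complete to $X_{i_{t+1}}$ by the multicover definition (or $z_t\in N_{i_a,i_{t+1}}$ for some $a<t+1$, which is complete to $X_{i_{t+1}}$ by the skew condition, when $N_{i_{t+1},i_{t+1}}$ is empty).
For $v\in W^{t}$ one has $v$ anticomplete to $X_{i_{t+1}}$ (since $v\in W_{i_{t+1}}$), and the skew conditions on the pairs $(i_a,i_{t+1})$ for $a<t+1$ together with the multicover definition place the adjacency witnesses $y,y'$ inside $N^1(X_{i_{t+1}})$.
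The delicate step is to select $z_t$ so that, uniformly over $v\in W^{t}$, either $v\sim z_t$ (placing $v$ inside $N(z_t)$, which contributes the $c_1$ term) or $v$ admits a neighbor in $N(z_t)\cap N^1(X_{i_{t+1}})$ (placing $v$ inside $N^2(X^{(t)})$, which contributes the $c_2$ term).
I expect this alignment to be enforced by further applications of the $n$-tidy condition to subsets of $I$ of size at most $n$ that contain $i_{t+1}$, turning each obstruction into a forbidden embedding of some $\burling{k}$ with $k\leq n$.
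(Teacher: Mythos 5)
Your opening step is sound and matches the mechanism the paper uses: for each $v\in W(\calM)$, any transversal $y_1,\ldots,y_n$ with $y_a\in N_{i_a,i_{n+1}}\cap N(v)$ cannot be independent, since otherwise the maps $a\mapsto y_a$, $b\mapsto v$ would form a forbidden $(\set{N_{i,i_n}}_{i\in I\setminus\set{i_{n+1}}},W_{i_n})$-embedding of $\burling{1}$. The genuine gap is in what you yourself call the delicate step. Your plan requires, for each class $W^t$, a \emph{single} vertex $z_t$ such that every $v\in W^t$ either is adjacent to $z_t$ or has a neighbor in $N(z_t)\cap N^1(X_{i_{t+1}})$. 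But the only witnesses your argument produces are the adjacent pairs $y,y'$ with $y'\in N_{i_{t+1},i_{n+1}}\cap N(v)$, and these depend on $v$: distinct vertices of $W^t$ may have entirely disjoint sets of candidate $y'$, and nothing in the tidiness hypotheses forces them to share one. The suggestion that further applications of $n$-tidiness (via embeddings of $\burling{k}$ for $k\geq 2$) would enforce this alignment does not work inside this lemma: a $(\set{N_i}_{i\in I},W)$-embedding of $\burling{k}$ with $k\geq 2$ requires several copies of $\burling{k}$ agreeing on $V_N(\burling{k})$ with a globally independent union of $V_A$-images, and the per-vertex failure data you have does not assemble into such a structure. (In the paper's proof of this lemma only $\burling{1}$ is ever used; the higher $\burling{k}$ machinery lives in the inductive step lemma.)

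The paper closes exactly this gap by inverting the quantifiers. It does not partition $W(\calM)$ by a per-vertex statistic; instead it fixes $k=\max I$, $J=I\setminus\set{k}$, and \emph{assumes for contradiction} that no set $U\subseteq\bigcup_{i\in J}N_{i,k}$ with $\size{U}\leq n-1$ satisfies $W_\ell\subseteq\bigcup_{v\in U}\bigl(N(v)\cup N^2(X_k\cup\set{v})\bigr)$ (note: all $(s+1)$-cliques are built on the single top clique $X_k$, not on the various $X_{i_{t+1}}$). Under that assumption it greedily selects, in decreasing order of $i\in J$, a vertex $v_i\in N_{i,k}\setminus\bigcup_{j\in J,\,j>i}N(v_j)$: the already-chosen $v_j$ form a set $U$ of size at most $n-1$, so some vertex of $W_\ell$ escapes $\bigcup_{j}\bigl(N(v_j)\cup N^2(X_k\cup\set{v_j})\bigr)$, and its covering neighbor in $N_{i,k}$ is the required $v_i$. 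The resulting independent transversal plus any $x\in X_k$ gives the forbidden $\burling{1}$-embedding. If you want to keep your direct-construction framing, you would need to reproduce this greedy/contrapositive argument; as written, the covering by $n-1$ cliques is asserted rather than proved.
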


\begin{proof}
Let
\[\calM=(W_0,\;X_1,N_{1,1},W_1,\;X_2,N_{1,2},N_{2,2},W_2,\;\ldots,\;X_\ell,N_{1,\ell},\ldots,N_{\ell,\ell},W_\ell){.}\]
By assumption, there is $I\in\calS(\calM)$ with $\size{I}=n+1$.
Let $k=\max I$ and $J=I\setminus\set{k}$, so that $\size{J}=n$.
For every $v\in\bigcup_{i\in J}N_{i,k}$, the set $X_k\cup\set{v}$ is an $(s+1)$-clique in $G$, which implies $\chi(N^2(X_k\cup\set{v}))\leq c_2$.
We claim that there is a set $U\subseteq\bigcup_{i\in J}N_{i,k}$ with $\size{U}\leq n-1$ and $W_\ell\subseteq\bigcup_{v\in U}(N(v)\cup N^2(X_k\cup\set{v}))$, which then implies
\[\chi(W_\ell)\leq\sum_{v\in U}\bigl(\chi(N(v))+\chi(N^2(X_k\cup\set{v}))\bigr)\leq\size{U}\cdot(c_1+c_2)\leq(n-1)(c_1+c_2){.}\]

Suppose not.
Recall that $V_A(\burling{1})=\set{a}$ and $V_N(\burling{1})=\set{b}$.
We proceed to find an independent set $\set{v_i}_{i\in J}$ in $G$ with $v_i\in N_{i,k}$ for every $i\in J$.
Then, for any $x\in X_k$, since the set $\bigcup_{i\in J}N_{i,k}$ is complete to $x$, the family of maps $\set{\phi_i}_{i\in J}$ with $\phi_i=\set{a\mapsto v_i,\:b\mapsto x}$ for all $i\in J$ is a $(\set{N_{i,\max J}}_{i\in J},W_{\max J})$-embedding of $\burling{1}$ in $G$, contradicting the assumption that $\calM$ is $n$-tidy.

We find the vertices $v_i$ in the decreasing order of the indices $i\in J$, as follows: for $i\in J$, assuming that we have already found $v_j$ for every index $j\in J$ with $i<j$, we choose an arbitrary vertex $v_i\in N_{i,k}\setminus\bigcup_{i<j\in J}N(v_j)$.
To this end, we need to argue that the latter set is non-empty.
We have $N_{i,k}\cap N(v_j)\subseteq N^1(X_k\cup\set{v_j})$ for all $j\in J$ with $i<j$.
Since $W_\ell$ is anticomplete to $X_k$, the set of vertices in $W_\ell$ covered by $N_{i,k}\cap\bigcup_{i<j\in J}N(v_j)$ is contained in $\bigcup_{i<j\in J}(N(v_j)\cup N^2(X_k\cup\set{v_j}))$.
By our assumption, $W_\ell$ is not a subset of the latter, so it contains a vertex covered by $N_{i,k}\setminus\bigcup_{i<j\in J}N(v_j)$, showing that the latter set is indeed non-empty.
\end{proof}

\begin{lemma}
\label{lem:tidy-bound}
For all\/ $m,n,s\geq 1$ and\/ $c_1,c_2,d\geq 0$, there are\/ $\ell,c'\geq 0$ such that the following holds.
Let\/ $G$ be a graph satisfying the following conditions:
\begin{itemize}
\item $\chi(N(v))\leq c_1$ for every vertex\/ $v$ of\/ $G$;
\item $\chi(N^2(X))\leq c_2$ for every\/ $(s+1)$-clique\/ $X$ in\/ $G$;
\item $\chi(C)\leq d$ for every independent multicover\/ $(\set{x_i}_{i\in I},\set{N_i}_{i\in I},C)$ with\/ $\size{I}=m$ in\/ $G$.
\end{itemize}
Then\/ $\chi(W(\calM))\leq c'$ for every\/ $n$-tidy\/ $s$-clique-multicover\/ $\calM$ of length\/ $\ell$ in\/ $G$.
\end{lemma}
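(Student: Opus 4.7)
The plan is to mirror the Ramsey dichotomy behind the proof of \Cref{lem:hom-base}, adapted to the $s$-clique setting: tidiness forces every pair $i<j$ in $\calM$ to be either skew or independent, and a sufficiently long monochromatic subset in a well-chosen colouring will either trigger \Cref{lem:tidy-base} via a long skew chain or yield a long independent transversal that I can turn into an honest independent multicover. Concretely, I would fix arbitrary enumerations $X_i=\set{x_i^1,\ldots,x_i^s}$ of the $s$-cliques and colour each pair $\set{i,j}\in\binom{[\ell]}{2}$ with $i<j$ as follows: skew pairs receive a distinguished colour $\mathsf{S}$, while an independent non-skew pair $(i,j)$ receives the least index $t\in[s]$ such that $x_j^t$ is anticomplete to $N_{i,j}$, which exists by the very definition of independence. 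This is an $(s{+}1)$-colouring of $\binom{[\ell]}{2}$, so choosing $\ell$ to be at least the Ramsey number that forces either an $\mathsf{S}$-monochromatic subset of size $n+1$ or, for some $t^*\in[s]$, a $t^*$-monochromatic subset of size $m$ guarantees one of these two outcomes.

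In the skew case, the resulting subset $J$ lies in $\calS(\calM)$ with $\size{J}=n+1>n$, and \Cref{lem:tidy-base} immediately yields $\chi(W(\calM))\leq(n-1)(c_1+c_2)$. In the independent case, writing $J=\set{j_1<\cdots<j_m}$, I would take $y_u=x_{j_u}^{t^*}$, $N'_u=N_{j_u,j_m}$, and $C=W_{j_m}$, and verify that $(\set{y_u}_{u\in[m]},\set{N'_u}_{u\in[m]},C)$ is an independent multicover in $G$. The bulk of the required disjointness and anticompleteness comes essentially for free from the nested structure $W_\ell\subseteq\cdots\subseteq W_0$ together with each $X_i$ being anticomplete to $W_i$: for $u<v$, the set $\set{y_v}\cup N_{j_v,j_m}\cup W_{j_m}$ sits inside $W_{j_u}$ and is therefore anticomplete to $y_u\in X_{j_u}$, while $y_u$ is complete to $N_{j_u,j_u}\supseteq N'_u$ and $N'_u$ covers $C$ by the axioms of an $s$-clique-multicover. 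The only non-routine point is the independence condition---namely, $y_u$ must be anticomplete to $N'_t=N_{j_t,j_m}$ for every $t<u$---and this is exactly what the Ramsey colour $t^*$ delivers, since $(j_t,j_u)$ being coloured $t^*$ forces $x_{j_u}^{t^*}=y_u$ to be anticomplete to $N_{j_t,j_u}\supseteq N_{j_t,j_m}$. The hypothesis on independent multicovers then gives $\chi(C)\leq d$, and $W(\calM)=W_\ell\subseteq W_{j_m}=C$ yields $\chi(W(\calM))\leq d$.

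Setting $c'=\max(d,(n-1)(c_1+c_2))$ and choosing $\ell$ as above closes the argument. I do not anticipate a deeper obstacle: the bookkeeping of nesting and anticompleteness between the various $X_i$, $N_{i,j}$, and $W_j$ is the main source of friction, and it is entirely schematic once one unpacks the $s$-clique-multicover axioms. The one conceptual feature worth flagging is that the need to select a single uniform representative from each $s$-clique is precisely what forces the extra $s$ colours on top of the basic skew-vs-independent dichotomy used in \Cref{lem:hom-base}; without them one could not bridge between independence of a pair of indices and anticompleteness of a specific vertex to a specific set.
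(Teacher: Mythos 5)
Your proposal is correct and follows essentially the same route as the paper: the same Ramsey colouring of pairs by ``skew'' versus the index of a clique vertex witnessing independence, the same dichotomy between a skew $(n+1)$-set (handled by \cref{lem:tidy-base}) and a monochromatic $m$-set yielding an independent multicover, and the same constant $c'=\max((n-1)(c_1+c_2),d)$. The only cosmetic difference is that you build the independent multicover on $(\set{N_{j_u,j_m}}_u,W_{j_m})$ and restrict to $W_\ell$ afterwards, whereas the paper uses $(\set{N_{j,\ell}}_{j\in J},W_\ell)$ directly.
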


\begin{proof}
By Ramsey's theorem, there is $\ell\geq 1$ such that for every partition of $\smash{\binom{[\ell]}{2}}$ into subsets $Q_0,Q_1,\ldots,Q_s$, one of the following conditions hold:
\begin{enumeratea}
\item\label{item:tidy-bound-a} there is a set $I\subseteq[\ell]$ with $\size{I}=n+1$ and $\smash{\binom{I}{2}}\subseteq Q_0$;
\item\label{item:tidy-bound-b} there are $r\in[s]$ and a set $J\subseteq[\ell]$ with $\size{J}=m$ and $\smash{\binom{J}{2}}\subseteq Q_r$.
\end{enumeratea}
We show that the lemma holds with $\ell$ and $c'=\max((n-1)(c_1+c_2),d)$.

Let $G$ be a graph as in the statement of the lemma.
Consider an $n$-tidy $s$-clique-multicover
\[\calM=(W_0,\;X_1,N_{1,1},W_1,\;X_2,N_{1,2},N_{2,2},W_2,\;\ldots,\;X_\ell,N_{1,\ell},\ldots,N_{\ell,\ell},W_\ell)\]
in $G$.
Let $X_i=\set{x_{i,1},\ldots,x_{i,s}}$ for every $i\in[\ell]$.
Let $Q_0$ be the set of pairs $\set{i,j}\in\smash{\binom{[\ell]}{2}}$ that are skew in $\calM$.
For every $r\in[s]$, let $Q_r$ be the set of pairs $\set{i,j}\in\smash{\binom{[\ell]}{2}}$ with $i<j$ such that $N_{i,j}$ is anticomplete to $x_{j,r}$.
Since $\calM$ is tidy, the sets $Q_0,Q_1,\ldots,Q_s$ form a partition of $\smash{\binom{[\ell]}{2}}$.
Therefore, by the choice of $\ell$, one of the conditions \ref{item:tidy-bound-a}, \ref{item:tidy-bound-b} above holds.
If \ref{item:tidy-bound-a} holds, then $I\in\calS(\calM)$, which implies $\chi(W_\ell)\leq(n-1)(c_1+c_2)\leq c'$ by \Cref{lem:tidy-base}.
If \ref{item:tidy-bound-b} holds, then $(\set{x_{j,r}}_{j\in J},\set{N_{j,\ell}}_{j\in J},W_\ell)$ is an independent multicover in $G$ with $\size{J}=m$, which implies $\chi(W_\ell)\leq d\leq c'$.
\end{proof}

\begin{lemma}
\label{lem:tidy-step}
For all\/ $n,s\geq 1$, $f\colon\setN\to\setN$, and\/ $\ell,c_1,c_2,c_3\geq 0$, there is\/ $c'\geq 0$ such that the following holds.
Let\/ $G$ be a graph satisfying the following conditions:
\begin{itemize}
\item $G$ is\/ $\burling{n}$-free;
\item $G$ is\/ $(f,s)$-clique-controlled;
\item $\chi(N(v))\leq c_1$ for every vertex\/ $v$ of\/ $G$;
\item $\chi(N^2(X))\leq c_2$ for every\/ $(s+1)$-clique\/ $X$ in\/ $G$;
\item $\chi(W(\calM))\leq c_3$ for every\/ $n$-tidy\/ $s$-clique-multicover\/ $\calM$ of length\/ $\ell+1$ in\/ $G$.
\end{itemize}
Then\/ $\chi(W(\calM'))\leq c'$ for every\/ $n$-tidy\/ $s$-clique-multicover\/ $\calM'$ of length\/ $\ell$ in\/ $G$.
\end{lemma}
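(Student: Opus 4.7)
The plan is to mirror the proof of \Cref{lem:hom-step}, with two essential modifications: the extension now appends an $s$-clique $X_{\ell+1}$ rather than a single vertex, so tidiness of the extended multicover must be maintained; and the pasting of embeddings uses the economical recursion $\burling{k+1}=\Dup(\burling{k},a^*_k)$, which adds only one copy and one new vertex, rather than $\Ext$ which acts at every marked vertex simultaneously.

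Write $\calM'=(W_0,X_1,N_{1,1},W_1,\dots,X_\ell,N_{1,\ell},\dots,N_{\ell,\ell},W_\ell)$ and $\calH=\set{J\in\calS(\calM'):1\le\size{J}\le n-1}$. For each $v\in W_\ell$, define $\Phi(v)\subseteq\calH$ to consist of those $J$ admitting a $(\set{N_{i,\max J}}_{i\in J},W_{\max J})$-embedding $\set{\phi_i}_{i\in J}$ of $\burling{n-\size{J}}$ in $G$ such that $v$ is adjacent to $\phi_i(a^*_{n-\size{J}})$ for every $i\in J$. This partitions $W_\ell$ into at most $2^{\size{\calH}}$ colour classes $W_\Phi=\set{v:\Phi(v)=\Phi}$. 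To bound each $\chi(W_\Phi)$ I would invoke $(f,s)$-clique-controlledness, reducing the task to uniformly bounding $\chi(N^2_{G[W_\Phi]}(X))$ over $s$-cliques $X\subseteq W_\Phi$.

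Given such an $X$, extend $\calM'$ to $\calM$ of length $\ell+1$ by $X_{\ell+1}=X$, $N_{\ell+1,\ell+1}=N^1_{G[W_\Phi]}(X)$, and $W_{\ell+1}$ a large subset of $N^2_{G[W_\Phi]}(X)$. For each $i\in[\ell]$, set $N_{i,\ell+1}=N_{i,\ell}$ when some $x\in X$ is anticomplete to $N_{i,\ell}$ (giving independence) and $N_{i,\ell+1}=N_{i,\ell}\cap N^1(X)$ otherwise (aiming for a skew pair). To force the skew condition $N_{i,\ell}\setminus N_{i,\ell+1}$ anticomplete to $W_{\ell+1}$ without breaking the covering property, I would iteratively trim from $W_{\ell+1}$ the neighbourhoods of offending vertices in $N_{i,\ell}$, charging each removal to the $c_1$ bound on $\chi(N(v))$. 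The heart of the proof is then verifying $n$-tidiness of $\calM$: suppose for contradiction that some $I\in\calS(\calM)$ with $\size{I}\le n$ admits a $(\set{N_{i,\ell+1}}_{i\in I},W_{\ell+1})$-embedding $\set{\phi_i}_{i\in I}$ of $\burling{n-\size{I}+1}$. When $I\subseteq[\ell]$ this contradicts $n$-tidiness of $\calM'$ (by inheritance of skewness), and for $I=\set{\ell+1}$ the hypothesis that $G$ is $\burling{n}$-free suffices. Otherwise let $J=I\setminus\set{\ell+1}\in\calH$: skewness of every pair $(i,\ell+1)$ for $i\in J$ forces $N_{i,\ell+1}\subseteq N^1(X)$, so each $\phi_i(a^*_{n-\size{J}})$ is adjacent to every vertex of $X$. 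Picking any $v^*\in X\subseteq W_\Phi$ and using $\Phi(v^*)=\Phi\ni J$, we obtain a second embedding $\set{\phi'_i}_{i\in J}$ of $\burling{n-\size{J}}$ for $\calM'$ on $J$ with every $\phi'_i(a^*_{n-\size{J}})$ adjacent to $v^*$. The recursion $\burling{n-\size{J}+1}=\Dup(\burling{n-\size{J}},a^*_{n-\size{J}})$ then fuses $\set{\phi_i}_{i\in J}$ with $\set{\phi'_i}_{i\in J}$, using $v^*$ as the new vertex $b^*$ and $\set{\phi'_i}$ as the duplicated copy, yielding a $(\set{N_{i,\max J}}_{i\in J},W_{\max J})$-embedding of $\burling{n-\size{J}+1}$ for $\calM'$ on $J$, contradicting the $n$-tidiness of $\calM'$. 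Hence $\chi(W_{\ell+1})\le c_3$, and summing over the $2^{\size{\calH}}$ classes produces the bound $c'$.

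The main obstacle is the tidiness bookkeeping in the construction of $\calM$: one must design $N_{i,\ell+1}$ and prune $W_{\ell+1}$ so that every pair $(i,\ell+1)$ is either skew or independent, that $N_{i,\ell+1}$ still covers the pruned $W_{\ell+1}$, and that the total chromatic cost of pruning is absorbed by a finite multiplicative factor depending only on $c_1$. Once this is arranged, the combinatorial fusion of embeddings through $\Dup$ is structurally transparent and closely parallels the $\Ext$-based fusion in \Cref{lem:hom-step}.
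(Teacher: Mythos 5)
Your high-level architecture matches the paper's: partition $W_\ell$ into classes $W_\Phi$ indexed by subsets of $\calH$, apply $(f,s)$-clique-control to reduce to bounding $\chi$ of second neighborhoods of $s$-cliques, extend $\calM'$ by one level, and derive a contradiction with $n$-tidiness by fusing two embeddings through $\Dup(\burling{k},a^*_k)$. However, there are three genuine gaps. First, your definition of $\Phi(v)$ only asks for the \emph{existence} of one embedding rooted near $v$, whereas the paper quantifies universally over all ``$(p_{\size{J}},q_{\size{J}})$-small'' sets $S$ (unions of boundedly many neighborhoods $N(u)$ and sets $N^2(X\cup\set{u})$), requiring an embedding avoiding $N[v]\cup S$ for \emph{every} such $S$. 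This robustness is not a technicality: a $(\cdot,\cdot)$-embedding must be induced, so the second copy $\set{\phi'_i}$ must be anticomplete to the first copy $\set{\phi_i}$ except through the designated vertex $b^*$. With a bare existential $\Phi(v)$ you have no control over adjacencies between the two copies, and the fused map need not be an embedding of $\burling{k+1}$.

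Second, your choice of fusion vertex $v^*\in X_{\ell+1}$ cannot work. In $\Dup(\burling{k},a^*_k)$ the new vertex $b^*$ must be adjacent to $N_{\burling{k}}(a^*_k)$, whose image lies in $B\subseteq W_{\ell+1}$, yet $X_{\ell+1}$ is anticomplete to $W_{\ell+1}$; and $b^*$ must be \emph{non}-adjacent to $a^*_k$ and the rest of $V_A(\burling{k})$, yet you have just argued that $\phi_i(a^*_k)\in N^1(X_{\ell+1})$ is adjacent to every vertex of $X_{\ell+1}$. The paper instead takes $v^*=\phi_{\ell+1}(a^*_k)\in N_{\ell+1,\ell+1}$: since $\phi_{\ell+1}$ is an embedding and all $\phi_i$ agree on $V_N(\burling{k})$, this $v^*$ meets $B$ exactly in $\phi_i(N_{\burling{k}}(a^*_k))$ and misses $\bigcup_{i\in J}A_i$, which is precisely the adjacency pattern $b^*$ requires; the set $\Phi(v^*)=\Phi$ is then queried at this $v^*$, not at a clique vertex. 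Third, the tidiness bookkeeping you flag as the main obstacle is not resolved by iterative trimming: the number of vertices of $N_{i,\ell}\setminus N^1(X_{\ell+1})$ with neighbors in $W_{\ell+1}$ is unbounded, so charging each removal $c_1$ colors gives no finite bound. The paper's mechanism is a further partition of $W'=N^2_{G[W_\Phi]}(X_{\ell+1})\setminus S$ into $(s+1)^\ell$ classes $W'_\Psi$ according to a function $\Psi\colon[\ell]\to X_{\ell+1}\cup\set{y}$ recording, for each $i$, either that $N_{i,\ell}\cap N(v)$ is complete to $X_{\ell+1}$ or a specific witness $x\in X_{\ell+1}$ that is not; within each class the sets $N_{i,\ell+1}$ can be defined so that every pair $(i,\ell+1)$ is automatically skew or independent with no trimming at all.
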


\begin{proof}
Let
\vspace*{-\medskipamount}
\begin{gather*}
p_k=(k+2)2^{n-k-1}-1,\quad q_k=k\cdot 2^{n-k-1}+1,\quad\text{for }k\in[n-1]{,}\\
h=\sum_{k=1}^{n-1}\tbinom{\ell}{k},\quad p=\sum_{k=1}^{n-1}\tbinom{\ell}{k}p_k,\quad q=\sum_{k=1}^{n-1}\tbinom{\ell}{k}q_k,\quad c'=2^hf(pc_1+qc_2+(s+1)^\ell c_3){.}
\end{gather*}
Let $G$ be a graph as in the statement of the lemma, and let
\[\calM'=(W_0,\;X_1,N_{1,1},W_1,\;X_2,N_{1,2},N_{2,2},W_2,\;\ldots,\;X_\ell,N_{1,\ell},\ldots,N_{\ell,\ell},W_\ell)\]
be an $n$-tidy $s$-clique-multicover in $G$.
We aim to prove that $\chi(W_\ell)\leq c'$.

A subset of $V(G)$ is \emph{$(\bar p,\bar q)$-small} if it is contained in $\bigcup_{v\in U}N(v)\cup\bigcup_{X\in\calX}N^2(X)$ for some set $U\subseteq V(G)$ with $\size{U}\leq\bar p$ and some set $\calX$ of $(s+1)$-cliques in $G$ with $\size{\calX}\leq\bar q$.
Observe that every $(\bar p,\bar q)$-small set $S\subseteq V(G)$ satisfies $\chi(S)\leq\bar pc_1+\bar qc_2$.

Let $\calH=\set{J\in\calS(\calM')\colon 1\leq\size{J}\leq n-1}$.
Thus $\size{\calH}\leq h$.
For each $v\in W_\ell$, let $\Phi(v)$ comprise the sets $J\in\calH$ such that for every $(p_{\size{J}},q_{\size{J}})$-small set $S\subseteq W_\ell$, there is a $(\set{N_{i,\ell}\cap N(v)}_{i\in J},\:W_\ell\setminus(N[v]\cup S))$-embedding of $\burling{n-\size{J}}$ in $G$.
For each $\Phi\subseteq\calH$, let $W_\Phi=\set{v\in W_\ell\colon\Phi(v)=\Phi}$.
The sets $W_\Phi$ partition $W_\ell$.
We prove that for every $\Phi\subseteq\calH$ and every $s$-clique $X$ in $W_\Phi$, we have
\[\chi(N_{G[W_\Phi]}^2(X))\leq pc_1+qc_2+(s+1)^\ell c_3{.}\]
Then, since $G$ is $(f,s)$-clique-controlled, we can conclude that $\chi(W_\Phi)\leq f(pc_1+qc_2+(s+1)^\ell c_3)$ for every $\Phi\subseteq\calH$, which implies
\[\chi(W_\ell)\leq\sum_{\Phi\subseteq\calH}\chi(W_\Phi)\leq 2^hf(pc_1+qc_2+(s+1)^\ell c_3)=c'{.}\]

Fix some $\Phi\subseteq\calH$ and some $s$-clique $X_{\ell+1}$ in $W_\Phi$.
Fix $x^*\in X_{\ell+1}$.
For every $J\in\calH\setminus\Phi$, since $\Phi(x^*)=\Phi$, there is a $(p_{\size{J}},q_{\size{J}})$-small set $S_J\subseteq W_\ell$ such that $G$ contains no $(\set{N_{i,\ell}\cap N(x^*)}_{i\in J},\:W_\ell\setminus(N[x^*]\cup S_J))$-embedding of $\burling{n-\size{J}}$.
Let $S=\bigcup_{J\in\calH\setminus\Phi}S_J$.
It follows that $S$ is $(p,q)$-small, which implies $\chi(S)\leq pc_1+qc_2$.
Let $N_{\ell+1,\ell+1}=\smash[b]{N_{G[W_\Phi]}^1}(X_{\ell+1})$ and $W'=\smash[b]{N_{G[W_\Phi]}^2}(X_{\ell+1})\setminus S$, both of which are subsets of $W_\ell$.
It follows that $G$ contains no $(\set{N_{i,\ell}\cap N(x^*)}_{i\in J},\:W'\setminus N[x^*])$-embedding of $\burling{n-\size{J}}$ for any $J\in\calH\setminus\Phi$, and
\[\chi(N_{G[W_\Phi]}^2(X_{\ell+1}))\leq\chi(S)+\chi(W')\leq pc_1+qc_2+\chi(W'){.}\]

Let $Y=X_{\ell+1}\cup\set{y}$ where $y$ is some symbol different from all members of $X_{\ell+1}$.
For each vertex $v\in W'$, let $\Psi(v)$ be a function $[\ell]\to Y$ such that the following holds for every $i\in[\ell]$:
\begin{itemize}
\item if $X_{\ell+1}$ is complete to $N_{i,\ell}\cap N(v)$, then $\Psi(v)(i)=y$,
\item otherwise, $\Psi(v)(i)=x$ for some $x\in X^{\ell+1}$ that is not complete to $N_{i,\ell}\cap N(v)$.
\end{itemize}
For each function $\Psi\colon[\ell]\to Y$, let $W'_\Psi=\set{v\in W'\colon\Psi(v)=\Psi}$.
There are $(s+1)^\ell$ functions $\Psi$, and the sets $W'_\Psi$ partition $W'$.
We prove that $\chi(W'_\Psi)\leq c_3$ for every function $\Psi$.
Then, we can conclude that
\[\chi(N_{G[W_\Phi]}^2(X_{\ell+1}))\leq pc_1+qc_2+\chi(W')\leq pc_1+qc_2+\sum_{\Psi:[\ell]\to Y}\chi(W'_\Psi)\leq pc_1+qc_2+(s+1)^\ell c_3{.}\]

Fix a function $\Psi$, and let $W_{\ell+1}=W'_\Psi$.
We aim to prove that $\chi(W_{\ell+1})\leq c_3$.
For every $i\in[\ell]$, let $N_{i,\ell+1}$ be defined as follows:
\begin{itemize}
\item if $\Psi(i)=y$, then $N_{i,\ell+1}$ is the set of vertices in $N_{i,\ell}$ that are complete to $X_{\ell+1}$;
\item if $\Psi(i)\in X_{\ell+1}$, then $N_{i,\ell+1}$ is the set of vertices in $N_{i,\ell}$ that are anticomplete to $\Psi(i)$.
\end{itemize}
The above implies the following, for every $i\in[\ell]$:
\begin{itemize}
\item if $\Psi(i)=y$, then $\emptyset\neq N_{i,\ell}\cap N(v)\subseteq N_{i,\ell+1}$ for every $v\in W_{\ell+1}$, so $N_{i,\ell+1}$ covers $W_{\ell+1}$ while $N_{i,\ell}\setminus N_{i,{\ell+1}}$ is anticomplete to $W_{\ell+1}$;
\item if $\Psi(i)\in X_{\ell+1}$, then $N_{i,\ell+1}\cap N(v)\neq\emptyset$ for every $v\in W_{\ell+1}$, so $N_{i,\ell+1}$ covers $W_{\ell+1}$.
\end{itemize}
Consequently, the following is an $s$-clique-multicover in $G$:
\[\calM=(W_0,\;X_1,N_{1,1},W_1,\;X_2,N_{1,2},N_{2,2},W_2,\;\ldots,\;X_{\ell+1},N_{1,\ell+1},\ldots,N_{\ell+1,\ell+1},W_{\ell+1}){.}\]
Moreover, every pair of indices $i$ and $j$ with $1\leq i<j\leq\ell+1$ is independent or skew in $\calM$, which follows from the above for $j=\ell+1$ and from the tidiness of $\calM'$ for $j\leq\ell$.
We conclude that $\calM$ is a tidy $s$-clique-multicover in $G$.
We prove that $\calM$ is $n$-tidy, which then implies $\chi(W_{\ell+1})\leq c_3$, as required.

To prove that $\calM$ is $n$-tidy, we need to verify that for every set $I\in\calS(\calM)$ with $1\leq\size{I}\leq n$, there is no $(\set{N_{i,\max I}}_{i\in I},W_{\max I})$-embedding of $\burling{n-\size{I}+1}$ in $G$.
For sets $I$ with $\max I\leq\ell$, we have $I\in\calS(\calM')$, and the property follows from the $n$-tidiness of $\calM'$.
Now, let $I\in\calS(\calM)$ where $1\leq\size{I}\leq n$ and $\max I=\ell+1$.
If $\size{I}=1$, then the property follows from the assumption that $G$ is $\burling{n}$-free.
Suppose towards a contradiction that $\size{I}\geq 2$ and there is a $(\set{N_{i,\ell+1}}_{i\in I},W_{\ell+1})$-embedding $
\set{\phi_i}_{i\in I}$ of $\burling{n-\size{I}+1}$ in $G$.
Let $J=I\setminus\set{\ell+1}$ and $k=n-\size{I}+1=n-\size{J}$.
Let $A_i=\phi_i(V_A(\burling{k}))$ and $B=\phi_i(V_N(\burling{k}))$ for $i\in I$, where the latter does not depend on $i$ by the definition of $(\set{N_{i,\ell+1}}_{i\in I},W_{\ell+1})$-embedding.

Recall that $(\burling{k})_{k\geq 1}$ and $(a^*_k)_{k\geq 1}$ are sequences claimed in \Cref{lem:alt-burling}.
Let $\burlingprime{k}$ denote the isomorphic copy of $\burling{k}$ used in the definition of $\burling{k+1}$ as $\Dup(\burling{k},a^*_k)$.
Let $v^*=\phi_{\ell+1}(a^*_k)$.
Since $I\in\calS(\calM)$ and $A_i=\phi_i(V_A(\burling{k}))\subseteq N_{i,\ell+1}$ for all $i\in I$, the sets $X_{\ell+1}\cup\set{v^*}$ and $X_{\max J}\cup\set{u}$ for all $u\in\bigcup_{i\in J}A_i$ are $(s+1)$-cliques.
Let $S$ be the union of the following sets:
\begin{itemize}
\item $N(v)$ for all $v\in\bigcup_{i\in J}A_i\cup B$,
\item $N^2(X_{\ell+1}\cup\set{v^*})$,
\item $N^2(X_{\max I}\cup\set{u})$ for all $u\in\bigcup_{i\in J}A_i$.
\end{itemize}
Since $\size{V_A(\burling{k})}=2^{k-1}$ and $\size{V_N(\burling{k})}=2\cdot 2^{k-1}-1$, the set $S$ is $(p_k,q_k)$-small.

Recall that we have fixed a vertex $x^*\in X_{\ell+1}$.
Since $N_{i,\ell+1}\subseteq N(x^*)$ for all $i\in I$ and $W_{\ell+1}\subseteq W'\setminus N[x^*]$, the family $\set{\phi_i}_{i\in J}$ is a $(\set{N_{i,\ell}\cap N(x^*)}_{i\in J},\:W'\setminus N[x^*])$-embedding of $\burling{k}$ in $G$.
We have guaranteed that such an embedding cannot exist if $J\in\calH\setminus\Phi$, so $J\in\Phi$.
Since $v^*\in N_{\ell+1,\ell+1}\subseteq W'\subseteq W_\Phi$, we have $J\in\Phi=\Phi(v^*)$, which implies (by definition and the fact that $\burling{k}$ and $\burlingprime{k}$ are isomorphic) that there is a $(\set{N_{i,\ell}\cap N(v^*)}_{i\in J},\:W_\ell\setminus(N[v^*]\cup S))$-embedding $\set{\phi'_i}_{i\in J}$ of $\burlingprime{k}$ in $G$.
Let $A'_i=\phi'_i(V_A(\burlingprime{k}))$ and $B=\phi_i(V_N(\burlingprime{k}))$ for $i\in J$, where the latter again does not depend on $i$.

For every $i\in J$, let $\psi_i=\phi_i\cup\phi'_i\cup\set{b^*_k\mapsto v^*}$.
We claim that $\set{\psi_i}_{i\in J}$ is a $(\set{N_{i,\ell}}_{i\in J},W_\ell)$-embedding of $\burling{k+1}$ in $G$.
The definitions imply that $\psi_i(V_A(\burling{k+1}))\subseteq N_{i,\ell}$, $\psi_i(V_N(\burling{k+1}))\subseteq W_\ell$, and all $\psi_i$ with $i\in J$ agree on $V_N(\burling{k+1})$, so it remains to verify the following conditions:
\begin{itemize}
\item $\psi_i$ is an embedding of $\burling{k+1}$ in $G[N_{i,\ell-1}\cup W_\ell]$;
\item $\bigcup_{i\in J}(A_i\cup A'_i)$ is an independent set in $G$.
\end{itemize}
We already know that $\bigcup_{i\in J}A_i$ and $\bigcup_{i\in J}A'_i$ are independent sets.
We also know that $\bigcup_{i\in J}A'_i\subseteq N(v^*)$, $\bigl(\bigcup_{i\in I}A_i\cup B'\bigr)\cap N(v^*)=\emptyset$, and $B\cap N(v^*)=\phi_i(N_{\burling{k}}(a^*_k))=\phi_i(N_{\burling{k}}(b^*_k))$, as required.
Therefore, to verify the two conditions above, it suffices to prove that $\bigcup_{i\in J}A'_i\cup B'$ is anticomplete to $\bigcup_{i\in J}A_i\cup B$.

The set $B'$ is anticomplete to $\bigcup_{i\in J}A_i\cup B$ because all neighbors of the vertices in the latter set belong to $S$ while $B'\cap S=\emptyset$.
Now, consider a vertex $v\in A'_i$ with $i\in J$.
Since the vertex $a\in V_A(\burlingprime{k})$ with $v=\phi'_i(a)$ has a neighbor $b\in V_N(\burlingprime{k})$, the vertex $v$ has a neighbor $\phi'_i(b)\in B'$.
We have $v\in N_{i,\ell}\cap N(v^*)$.
If $v\in N_{i,\ell+1}$, then the fact that $N_{i,\ell+1}$ is complete to $X_{\ell+1}$ implies $v\in N^1(X_{\ell+1}\cup\set{v^*})$, so $N(v)\subseteq N^2(X_{\ell+1}\cup\set{v^*})\subseteq S$ while $B'\cap S=\emptyset$, contradicting the fact that $v$ has a neighbor in $B'$.
Thus $v\in N_{i,\ell}\setminus N_{i,\ell+1}$, and since the latter set is anticomplete to $W_{\ell+1}$ while $B\subseteq W_{\ell+1}$, we infer that $v$ is anticomplete to $B$.
If $v\in N_{i,\ell}\cap N(u)$ for some $u\in A_j$ with $j\in J$, then the fact that $N_{j,\ell}\subseteq N_{j,\max J}$ and the latter set is complete to $X_{\max J}$ implies $v\in N^1(X_{\max J}\cup\set{u})$, so $N(v)\subseteq N^2(X_{\max J}\cup\set{u})\subseteq S$, again contradicting the fact that $v$ has a neighbor in $B'$.
This shows that $v$ is anticomplete to $\bigcup_{i\in J}A_i$.

We have shown that $\set{\psi_i}_{i\in J}$ is a $(\set{N_{i,\ell}}_{i\in J},W_\ell)$-embedding of $\burling{k+1}$ in $G$.
Since $J\in\calS(\calM')$, this contradicts the assumption that $\calM'$ is $n$-tidy.
Thanks to this contradiction, we conclude that $\calM$ is an $n$-tidy $s$-clique-multicover in $G$, which implies $\chi(W_{\ell+1})\leq c_3$.
\end{proof}

\begin{lemma}
\label{lem:final-bound}
For all\/ $n,s\geq 1$, $f\colon\setN\to\setN$, and\/ $\ell,c_1,c_2,c_3\geq 0$, there is\/ $c'\geq 0$ such that the following holds.
Let\/ $G$ be a graph satisfying the following conditions:
\begin{itemize}
\item $G$ is\/ $\burling{n}$-free;
\item $G$ is\/ $(f,s)$-clique-controlled;
\item $\chi(N(v))\leq c_1$ for every vertex\/ $v$ of\/ $G$;
\item $\chi(N^2(X))\leq c_2$ for every\/ $(s+1)$-clique\/ $X$ in\/ $G$;
\item $\chi(W(\calM))\leq c_3$ for every\/ $n$-tidy\/ $s$-clique-multicover\/ $\calM$ of length\/ $\ell$ in\/ $G$.
\end{itemize}
Then $\chi(G)\leq c'$.
\end{lemma}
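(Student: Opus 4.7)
The plan is to iterate \Cref{lem:tidy-step} downward from length $\ell$ to length $0$, and then read off the bound on $\chi(G)$ from the trivial length-$0$ multicover $(V(G))$.

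Concretely, set $c_\ell := c_3$. For $k = \ell-1, \ell-2, \ldots, 0$, apply \Cref{lem:tidy-step} with $n$, $s$, $f$, $c_1$, $c_2$ unchanged, with $k$ in the role of its length parameter, and with $c_{k+1}$ in the role of $c_3$; denote the resulting constant by $c_k$. The four fixed hypotheses of \Cref{lem:tidy-step} concern $G$ alone and are exactly the first four hypotheses of the present lemma, so they remain available at every step. A short downward induction on $k$ then shows that $\chi(W(\calM)) \leq c_k$ for every $n$-tidy $s$-clique-multicover $\calM$ of length $k$ in $G$: the base case $k = \ell$ is the fifth hypothesis of the lemma, and the inductive step $k+1 \to k$ is exactly what the invocation of \Cref{lem:tidy-step} at stage $k$ delivers.

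To extract $\chi(G)$ from this, I observe that the trivial sequence $\calM_0 = (V(G))$ is an $s$-clique-multicover of length $0$, since every condition indexed by $i \in [0] = \emptyset$ is vacuous; for the same reason, $\calM_0$ is vacuously tidy and $n$-tidy, and $W(\calM_0) = V(G)$. The conclusion of the induction at $k = 0$ therefore gives $\chi(G) = \chi(W(\calM_0)) \leq c_0$, so the lemma holds with $c' := c_0$. There is no real obstacle at this stage of the argument: all the difficult combinatorial work---constructing a length-$(\ell+1)$ multicover inside an $(f,s)$-clique-controlled colour class while preserving $n$-tidiness, and assembling an embedding of $\burling{k+1}$ from embeddings of $\burling{k}$ through the $\Dup$ operation---has already been done inside \Cref{lem:tidy-step}, and the role of the present lemma is only to anchor the descent at the trivial multicover.
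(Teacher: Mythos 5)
Your proposal is correct and is essentially identical to the paper's own proof: both define the constants by composing the bounds from \Cref{lem:tidy-step} downward from length $\ell$ to length $0$ and then anchor the descent at the trivial, vacuously $n$-tidy multicover $(V(G))$. No gaps.
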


\begin{proof}
For any $\ell,c_3\geq 0$, let $f_\ell(c_3)$ be the constant $c'$ claimed by \Cref{lem:tidy-step} for $s,n,f,\ell,c_1,c_2,c_3$.
We show that the lemma holds with $c'=(f_1\circ\cdots\circ f_\ell)(c_3)$.
To this end, we prove that for every $k\in\set{0,1,\ldots,\ell}$, every $n$-tidy $s$-clique-multicover $\calM_k$ of length $k$ in $G$ satisfies $\chi(W(\calM_k))\leq(f_{k+1}\circ\cdots\circ f_\ell)(c_3)$.
Then, for $k=0$ and the trivial $n$-tidy $s$-clique-multicover $\calM_0=(V(G))$, we can conclude that
\[\chi(G)=\chi(W(\calM_0))\leq(f_1\circ\cdots\circ f_\ell)(c_3)=c'{.}\]

The proof of the claim goes by downward induction on $k$.
The base case of $k=\ell$ follows by the last assumption of the lemma.
For the induction step with $0\leq k<\ell$, \Cref{lem:tidy-step} applied to an $n$-tidy $s$-clique-multicover $\calM_k$ of length $k$ with $(f_{k+2}\circ\cdots\circ f_\ell)(c_3)$ playing the role of $c_3$ gives $\chi(W(\calM_k))\leq(f_{k+1}\circ\cdots\circ f_\ell)(c_3)$, as required.
\end{proof}

We are ready to complete the proof of \Cref{lem:main}, which we restate for convenience.

\lemmain*

\begin{proof}
By \Cref{lem:burling,lem:alt-burling}, there exists $n$ such that every graph $G$ with $\beta(G)\leq r$ is $\burling{n}$-free.
We proceed to define functions $f_1,\ldots,f_k\colon\setN\to\setN$ such that every graph $G$ satisfying the assumptions of the lemma is $(f_s,s)$-clique-controlled for every $s\in[k]$.
This is done by induction on $s$.
For the base case of $s=1$, let $f_1(c)=f(c+c_1+1)$ for all $c\geq 0$.
The above implies that for any $v\in V(G)$ and $c\geq 0$, if $\chi(N^2(\set{v}))\leq c$, then
\[\chi(N^2[v])=\chi(N^2(\set{v})\cup N[v])\leq\chi(N^2(\set{v}))+\chi(N[v])\leq c+c_1+1{.}\]
The same argument can be applied with any induced subgraph of $G$ playing the role of $G$, which shows that $G$ is $(f_1,1)$-clique-controlled.
For the induction step, suppose that $s\in[k-1]$ and we already have $f_s$, and for all $c_2\geq 0$, let $f_{s+1}(c_2)$ be the constant $c'$ claimed by \Cref{lem:final-bound} with $f_s$ playing the role of $f$ and with the constants $\ell,c'\geq 0$ claimed by \Cref{lem:tidy-bound} playing the role of $\ell,c_3$.
By the latter, $\chi(W(\calM))\leq c_3$ for every $n$-tidy $s$-clique-multicover $\calM$ of length $\ell$ in $G$, and therefore, since $G$ is $(f_s,s)$-clique-controlled, \Cref{lem:final-bound} yields $\chi(G)\leq f_{s+1}(c_2)$.
The same argument can be applied with any induced subgraph of $G$ playing the role of $G$, which shows that $G$ is $(f_{s+1},s+1)$-clique-controlled.

We conclude that $G$ is $(f_k,k)$-clique-controlled, which implies that $\chi(G)\leq f_k(0)$, because $N^2(X)=\emptyset$ for every $k$-clique $X$ in $G$.
\end{proof}

\section{Region intersection graphs}
\label{sec:region}

Recall that a \emph{region intersection model} of a graph $G$ over graph $H$ is a mapping $\mu$ from $V(G)$ to connected sets in $H$ such that $\mu(u)\cap\mu(v)\neq\emptyset$ exactly when $uv$ is an edge of $G$.
This mapping extends to connected sets $S\subseteq V(G)$ by setting $\mu(S)=\bigcup_{v\in S}\mu(v)$, which is a connected set in $H$.

A \emph{$t$-model} in a graph $H$ is a family of $t$ connected sets in $H$ that are pairwise disjoint and adjacent in $H$.
Let $\mu$ be a region intersection model of a graph $G$ over $H$, let $S\subseteq V(G)$, and let $\gamma\geq 0$.
A \emph{$(\mu,S,\gamma,t)$-model} is a family $\set{(x_j,T_j)}_{j\in[t]}$ such that the following holds:
\begin{enumeratei}
\item\label{item:model-1} $\set{T_j}_{j\in[t]}$ is a $t$-model in $H$;
\item\label{item:model-2} $x_j\in S$ for every $j\in[t]$;
\item\label{item:model-3} $\mu(N^\gamma[x_j])\subseteq T_j$ for every $j\in[t]$.
\end{enumeratei}

Let $G$ be a graph and $\delta\geq 1$.
A \emph{$\delta$-component} of a set $U\subseteq V(G)$ in $G$ is an inclusion-minimal non-empty set $C\subseteq U$ such that $d(u,v)>\delta$ for all $u\in C$ and $v\in U\setminus C$.
Note that any connected component of $G[U]$ is a $1$-component of $U$ in $G$.
For $\rho\geq 0$, a \emph{$(\delta,\rho)$-partition} of a set $S\subseteq V(G)$ in $G$ is
a partition $\calU$ of $S$ such that for every $U\in\calU$, every $\delta$-component of $U$ has radius at most $\rho$.

\begin{lemma}
\label{lem:region-step}
Let\/ $G$ and\/ $H$ be graphs, let\/ $\mu$ be a region intersection model of\/ $G$ over\/ $H$, and let\/ $S\subseteq V(G)$.
Let\/ $t\geq 2$ and\/ $\gamma\geq\delta\geq 1$.
If there is no\/ $(\mu,S,\gamma,t)$-model, then\/ $S$ has a\/ $(\delta,2\gamma+1+(t-2)\delta)$-partition in\/ $G$ of size at most\/ $2^{t-2}$.
Moreover, such a partition can be computed in\/ $C(t,\ceil{\frac{\gamma}{\delta}})n$ time given\/ $G$, $H$, $\mu$, and\/ $S$ in the input, where\/ $n$ denotes the size of the input.
\end{lemma}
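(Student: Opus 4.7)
I plan to prove the lemma by induction on $t \geq 2$, constructing the partition explicitly so that the algorithmic claim follows by tracking the running time of the construction.

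For the base case $t = 2$, I will show that the trivial one-part partition $\set{S}$ works, i.e., every $\delta$-component $C$ of $S$ has radius at most $2\gamma + 1$ in $G$. If some $C$ had larger radius, I could find $x_1, x_2 \in C$ with $d_G(x_1, x_2) > 2\gamma + 1$, whence $\mu(N^\gamma[x_1]) \cap \mu(N^\gamma[x_2]) = \emptyset$ (a common vertex would force $d_G(x_1, x_2) \leq 2\gamma + 1$). By the $\delta$-connectedness of $C$ combined with $\delta \leq \gamma$, the sets $\mu(N^\gamma[y])$ along a $\delta$-path in $C$ from $x_1$ to $x_2$ share $\mu$-images and hence overlap consecutively in $H$, so $T_1 := \mu(N^\gamma[x_1])$ and $\mu(N^\gamma[x_2])$ lie in a common connected component of $H$. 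I pick a shortest path $u_0 u_1 \cdots u_r$ in $H$ from $T_1$ to $\mu(N^\gamma[x_2])$ that meets $T_1$ only at $u_0$ and meets $\mu(N^\gamma[x_2])$ only at $u_r$, and set $T_2 := \set{u_1, \ldots, u_r} \cup \mu(N^\gamma[x_2])$. Then $T_1$ and $T_2$ are connected, disjoint, and adjacent in $H$ (via the edge $u_0 u_1$), yielding the forbidden $(\mu, S, \gamma, 2)$-model---a contradiction.

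For the inductive step with $t \geq 3$, assume the lemma for $t - 1$ and let $S$ admit no $(\mu, S, \gamma, t)$-model. Pick any $x_1 \in S$. I iterate over candidate cut radii $R \in \set{2\gamma + 1 + i\delta : 0 \leq i \leq t - 2}$, writing $I := S \cap N^R[x_1]$ and $O := S \setminus N^R[x_1]$. I aim to show that at the correct $R$ from this list, $O$ admits no $(\mu, O, \gamma, t-1)$-model and every $\delta$-component of $I$ has radius at most $R$. The existence of such a good $R$ is established by contrapositive: if at every candidate $R$ either $O$ admits a $(\mu, O, \gamma, t-1)$-model or $I$ has a $\delta$-component of radius greater than $R$, then the chain of obstructions (each providing a new vertex of $S$ at approximately distance $R + \delta$ from $x_1$, either from the $(t-1)$-model on $O$ or from a long $\delta$-path inside $I$) produces, in combination with the base-case shortest-path-routing trick in $H$ extended to $t - 1$ pairwise-disjoint paths from $\mu(N^\gamma[x_1])$ to the $T_j$'s, a $(\mu, S, \gamma, t)$-model, contradicting the hypothesis. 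Once a good $R$ is fixed, applying the inductive hypothesis to $O$ yields a partition of $O$ of size at most $2^{t-3}$ with $\delta$-components of radius at most $2\gamma + 1 + (t-3)\delta$, and adjoining $\set{I}$ as a single extra part gives a partition of $S$ of size $2^{t-3} + 1 \leq 2^{t-2}$ with every $\delta$-component of radius at most $R \leq 2\gamma + 1 + (t-2)\delta$, as required.

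The main obstacle will be the simultaneous pairwise-disjoint routing of $t - 1$ shortest paths in $H$ from $\mu(N^\gamma[x_1])$ to the $T_j$'s---a nontrivial generalization of the single-path construction from the base case---and the bookkeeping that ensures each failed candidate $R$ consumes exactly $\delta$ of the eventual radius bound before forcing the $t$-model to appear. The algorithmic claim will follow by implementing the construction with two BFS passes, one in $G$ from $x_1$ to identify the candidate $I$'s and $O$'s and one in $H$ from $\mu(N^\gamma[x_1])$ to compute the routed paths, plus bookkeeping of at most $t - 1$ concurrently-growing connected sets in $H$; the running time is dominated by these BFS computations and the bounded number of reroutings, giving the stated bound $C(t, \ceil{\gamma/\delta}) \cdot n$.
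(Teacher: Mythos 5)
Your base case is fine (a different but valid route to the same conclusion as the paper's, via constructing a $2$-model from two far-apart vertices of a $\delta$-component), but the inductive step has a fatal gap: the claim that some single cut radius $R\in\set{2\gamma+1+i\delta\colon 0\leq i\leq t-2}$ makes $O=S\setminus N^R[x_1]$ admit no $(\mu,O,\gamma,t-1)$-model is false. Take $t=3$, let $G$ and $H$ both be long paths with the obvious interval model $\mu$, and $S=V(G)$: a path host contains no $3$-model (three pairwise disjoint, pairwise adjacent connected sets cannot exist in a path), so the hypothesis of the lemma holds, yet for every candidate $R$ the set $O$ still contains a long subpath and hence a $(\mu,O,\gamma,2)$-model. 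The reason your contrapositive cannot be repaired is that a $(\mu,O,\gamma,t-1)$-model $\set{(x_j,T_j)}_{j\in[t-1]}$ only constrains the vertices $x_j$ to lie in $O$; the branch sets $T_j$ are arbitrary connected subsets of $H$ and may well cover $\mu(N^\gamma[x_1])$, so in general there is no room to adjoin a $t$-th branch set around $x_1$, and the ``simultaneous pairwise-disjoint routing of $t-1$ paths'' that you flag as the main obstacle but do not carry out is exactly the step that fails.

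The paper's proof avoids a single cut entirely: it slices $S$ into infinitely many annuli $S\cap V_{i+1,i+\delta}$ for $i$ ranging over the arithmetic progression $2\gamma+1+k\delta$, and for each annulus it recurses not on $(\mu,\cdot,\gamma,t-1)$ but on a modified model $\mu'_{i-\gamma,i+\gamma+\delta}$ obtained by excising $\mu(V_{0,i-\gamma-1})$ from the host graph (splitting the images of the boundary layer into components) and with $\gamma$ enlarged to $\gamma+\delta$. This excision is what guarantees that any $(t-1)$-model found for an annulus is automatically disjoint from $\mu(V_{0,i-\gamma-1})$ yet adjacent to it, so it extends to a $t$-model by taking $T_t=\mu(V_{0,i-\gamma-1})$ and $x_t=x^*$ --- no disjoint path routing is needed. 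The per-annulus partitions are then interleaved according to the parity of $k$, so that same-class annuli are $\delta$-separated; this is where the factor $2^{t-2}=2\cdot 2^{t-3}$ (rather than your $2^{t-3}+1$) comes from, and your smaller count is another symptom that your decomposition is too coarse.
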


\begin{proof}
The proof goes by induction on $t$.
Let $t\geq 2$, and if $t\geq 3$, then suppose the lemma holds for $t-1$.
Let $\rho=2\gamma+1+(t-2)\delta$.
If $G$ is not connected, then a suitable $(\delta,\rho)$-partition can be computed separately for every connected component of $G$, so assume $G$ is connected.
Fix some $x^*\in S$.
For every $i\geq 0$, let $V_i=\set{v\in G\colon d_G(x^*,v)=i}$.
For all $j\geq i\geq 0$, let $V_{i,j}=V_i\cup V_{i+1}\cup\cdots\cup V_j$, and let $\mu_{i,j}$ be the restriction of $\mu$ to the set $V_{i,j}$.
For all $i\geq 1$, let $\calC_i$ be the set of connected components of all graphs $H[\mu(v)\setminus\mu(V_{0,i-1})]$ with $v\in V_i$, let $V'_i$ be a set of size $\size{\calC_i}$ disjoint from $V(G)$,
and let $\mu'_i\colon V'_i\to\calC_i$ be an arbitrary bijection.
For all $j>i\geq 1$, let $V'_{i,j}=V'_i\cup V_{i+1,j}$, let $\mu'_{i,j}=\mu'_i\cup\mu_{i+1,j}$, and let $G'_{i,j}$ be the graph with
\[V(G'_{i,j})=V'_{i,j},\quad E(G'_{i,j})=\set{u'v'\in V'_{i,j}\colon\mu'_{i,j}(u')\cap\mu'_{i,j}(v')\neq\emptyset}{.}\]
Note that $G[V_{i+1,j}]$ is an induced subgraph of $G'_{i,j}$.

Let $i\geq 2\gamma+1$.
We prove that every $(\mu'_{i-\gamma,i+\gamma+\delta},\:S\cap V_{i+1,i+\delta},\:\gamma+\delta,\:t-1)$-model $\set{(x_j,T_j)}_{j\in[t-1]}$ extends to a $(\mu,S,\gamma,t)$-model $\set{(x_j,T_j)}_{j\in[t]}$ where $x_t=x^*$ and $T_t=\mu(V_{0,i-\gamma-1})$.
To this end, we need to verify conditions \ref{item:model-1}--\ref{item:model-3} from the definition of $(\mu,S,\gamma,t)$-model.
Condition \ref{item:model-2} follows directly from the same condition on $\set{(x_j,T_j)}_{j\in[t-1]}$ and from the fact that $x^*\in S$.
Condition \ref{item:model-3} for $j=t$ follows from the definition of $T_t$, and for $j\in[t-1]$, since $G[V_{i-\gamma+1,i+\gamma+\delta}]$ is an induced subgraph of $G'_{i-\gamma,i+\gamma+\delta}$ and $x_j\in S\cap V_{i+1,i+\delta}$, we have $N_G^\gamma[x_j]\subseteq V_{i-\gamma+1,i+\gamma+\delta}$, which implies $\mu(N_G^\gamma[x_j])=\mu'_{i-\gamma,i+\gamma+\delta}(N_{G'_{i-\gamma,i+\gamma+\delta}}^\gamma[x_j])\subseteq T_j$.
For the proof of \ref{item:model-1}, observe that the sets in $\set{T_j}_{j\in[t-1]}$ are pairwise disjoint and adjacent by assumption.
They are also disjoint from $T_t$, because $\mu'_{i-\gamma,i+\gamma+\delta}(v')\cap\mu(V_{0,i-\gamma-1})=\emptyset$ for every $v'\in V'_{i-\gamma,i+\gamma+\delta}$ by definition.
It remains to show that for every $j\in[t-1]$, $T_j$ is adjacent to $T_t$.
Let $j\in[t-1]$, and let $\ell\in\set{i+1,\ldots,i+\delta}$ be such that $x_j\in S\cap V_\ell$.
Let $v_0v_1\cdots v_\ell$ be a shortest path in $G$ with $v_0=x^*$ and $v_\ell=x_j$.
Since $\mu(v_{i-\gamma})\cap\mu(v_{i-\gamma+1})\neq\emptyset$ while $\mu(V_{0,i-\gamma-1})\cap\mu(v_{i-\gamma+1})=\emptyset$, there is a connected component $C\in\calC_{i-\gamma}$ of $\mu(v_{i-\gamma})\setminus\mu(V_{0,i-\gamma-1})$ with $C\cap\mu(v_{i-\gamma+1})\neq\emptyset$.
Let $v'\in V'_{i-\gamma}$ be such that $\mu'_{i-\gamma}(v')=C$.
It follows that $\mu_{i-\gamma,i+\gamma+\delta}(v')\cap\mu_{i-\gamma,i+\gamma+\delta}(v_{i-\gamma+1})=C\cap\mu(v_{i-\gamma+1})\neq\emptyset$, so $v'v_{i-\gamma+1}\cdots v_\ell$ is a shortest path in $G'_{i-\gamma,i+\gamma+\delta}$.
Consequently, $v'$ is at distance $i'-(i-\gamma)\leq\gamma+\delta$ from $x_j=v_\ell$ in $G'_{i-\gamma,i+\gamma+\delta}$.
The set $\mu'_{i-\gamma}(v')$, as a connected component of $\mu(v_{i-\gamma})\setminus\mu(V_{0,i-\gamma-1})$, is adjacent to $\mu(V_{0,i-\gamma-1})=T_t$.
Since $\mu'_{i-\gamma}(v')\subseteq\mu'_{i-\gamma,i+\gamma+\delta}(N_{G'_{i-\gamma,i+\gamma+\delta}}^{\gamma+\delta}[x_j])\subseteq T_j$, the set $T_j$ is indeed adjacent to $T_t$.
We conclude that $\set{(x_j,T_j)}_{j\in[t]}$ is indeed a $(\mu,S,\gamma,t)$-model.

Suppose $t=2$.
Since there is no $(\mu,S,\gamma,2)$-model by assumption, the above implies that there is no $(\mu'_{\gamma+1,3\gamma+1+\delta},\:S\cap V_{2\gamma+2,2\gamma+1+\delta},\:\gamma+\delta,\:1)$-model.
Consequently, $V_{2\gamma+2}=\emptyset$ (and hence $V_i=\emptyset$ for all $i\geq 2\gamma+2$), because otherwise, for any $y^*\in V_{2\gamma+2}$, $\set{(y^*,N_{G'_{\gamma+1,3\gamma+1+\delta}}^\gamma[y^*])}$ would be a (trivial) $(\mu'_{i-\gamma,i+\gamma+\delta},\:S\cap V_{2\gamma+2,2\gamma+1+\delta},\:\gamma+\delta,\:1)$-model.
Since the set $V_{0,2\gamma+1}$ has radius at most $2\gamma+1=\rho$, we conclude that $\set{S\cap V_{0,2\gamma+1}}$ is a (trivial) $(\delta,\rho)$-partition of $S$ of size $1$.

Now, suppose $t\geq 3$.
Let $I=\set{2\gamma+1+k\delta\colon k\in\setN}$.
Since there is no $(\mu,S,\gamma,t)$-model, the above implies that for every $i\in I$, there is no $(\mu'_{i-\gamma,i+\gamma+\delta},\:S\cap V_{i+1,i+\delta},\:\gamma+\delta,\:t-1)$-model.
Therefore, for every $i\in I$, the induction hypothesis provides a $(\delta,\rho)$-partition $\set{U_{i,k}}_{k\in[2^{t-3}]}$ of $S\cap V_{i+1,i+\delta}$ in $G'_{i-\gamma,i+\gamma+\delta}$.
For $r\in\set{0,1}$, let $I_r=\{2\gamma+1+k\delta\colon k\in\setN$ and $k\equiv r\pmod{2}\}$.
For $r\in\set{0,1}$ and $k\in[2^{t-3}]$, set
\[U_{r\cdot 2^{t-3}+k} = \begin{cases}
\bigcup_{i\in I_r}U_{i,k}{,} &\text{ if } (r,k)\neq(1,1){,}\\
(S\cap V_{0,2\gamma+1}) \cup \bigcup_{i\in I_r}U_{i,k}{,} &\text{ if } (r,k)=(1,1){.}
\end{cases}\]

We prove that $\set{U_k}_{k\in[2^{t-2}]}$ is a $(\delta,\rho)$-partition in $G$.
Since the sets $S\cap V_{0,2\gamma+1}$ and $S\cap V_{i+1,i+\delta}$ with $i\in I$ are pairwise disjoint and $S$ is their union, $\set{U_k}_{k\in[2^{t-2}]}$ is a partition of $S$.
Let $r\in\set{0,1}$ and $k\in[2^{t-3}]$.
If $u,v\in U_{r\cdot 2^{t-3}+k}$ while $u$ and $v$ do not belong to the same set of the form $U_{i,k}$ with $i\in I_r$ or $S\cap V_{0,2\gamma+1}$ when $(r,k)=(1,1)$, then $u$ and $v$ are separated in $G$ by the set $V_{i+1,i+\delta}$ for some $i\in I_{1-r}$, so $d_G(u,v)\geq\delta+1$.
This shows that every $\delta$-component $C$ of $U_{r\cdot 2^{t-3}+k}$ is contained either in $U_{i,k}$ for some $i\in I_r$ or in $S\cap V_{0,2\gamma+1}$ when $(r,k)=(1,1)$.
In the former case, since $U_{i,k}\subseteq V_{i+1,i+\delta}$ and $V_{i-\delta+1,i+2\delta}\subseteq V_{i-\gamma+1,i+\gamma+\delta} \subseteq V(G'_{i-\gamma,i+\gamma+\delta})$ (as $\gamma\geq\delta$), $C$ is a $\delta$-component of $U_{i,k}$ in $G'_{i-\gamma,i+\gamma+\delta}$, so it has radius at most $\rho$ by the induction hypothesis.
In the latter case, $C$ has radius at most $2\gamma+1\leq\rho$.
We conclude that $\set{U_k}_{k\in[2^{t-2}]}$ is indeed a $(\delta,\rho)$-partition in $G$.

For the complexity analysis, since the mapping $\mu$ stores all necessary information about $G$, we can assume that the input contains just $\mu$ and $S$, so the size of an input instance can be measured as $n=\size{\mu}=\sum_{v\in V(G)}\size{\mu(v)}$, because $\size{S}\leq\size{\mu}$.
We prove that the running time is bounded by $C(t,\lambda)n$, where $\lambda=\ceil{\frac{\gamma}{\delta}}$.
Computing $\set{U_k}_{k\in[2^{t-2}]}$ involves recursive calls on $\mu'_{i-\gamma,i+\gamma+\delta}$ for all $i\in I$ with $V_{i+1}\neq\emptyset$.
We have
\[\sum_{i\in I}\size{\mu'_{i-\gamma,i+\gamma+\delta}}\leq\sum_{i\in I}\bigl(\size{\mu'_{i-\gamma}}+\size{\mu_{i-\gamma+1,i+\gamma+\delta}}\bigr)\leq\size{\mu}+(\lambda+1)\size{\mu}=(\lambda+2)\size{\mu}{.}\]
So the total time spent on the recursive calls is at most $(\lambda+2)C(t-1,\lambda+1)n$ when $t\geq 3$.
Since the additional work requires at most $C'n$ time, we get
\[C(2,\lambda)=C',\qquad C(t,\lambda)=(\lambda+2)C(t-1,\lambda+1)+C'\quad\text{for }t\geq 3{,}\]
which yields $C(t,\lambda)\leq(\lambda+t-1)^tC'$.
\end{proof}

Now, we are ready to prove \Cref{thm:region-asdim}, which we restate for convenience.

\thmregionasdim*

\begin{proof}
Let $\delta\geq 0$, and let $\mu$ be a region intersection model of a graph $G\in\calG$ over a graph $H\in\calH$.
Since $K_t$ is not a minor of $H$, there is no $(\mu,V(G),\delta,t)$-model.
Therefore, by \Cref{lem:region-step}, there is a $(\delta,t\delta+1)$-partition of $V(G)$ in $G$ of size at most $2^{t-2}$, as claimed.
\end{proof}

By the last statement of \Cref{lem:region-step}, given $G$, $H$, $\mu$, and $\delta$ in the input, a $(\delta,t\delta+1)$-partition of $V(G)$ in $G$ of size at most $2^{t-2}$ claimed by \Cref{thm:region-asdim} can be computed in $C(t,1)n$ time.

\section{Finite asymptotic dimension}
\label{sec:asdim}

This section is devoted to the proofs of the results concerning graph classes with finite asymptotic dimension, that is, \Cref{prop:asdim-subdivision,thm:asdim-2-controlled}, which we restate.

\propasdimsubdivision*

\begin{proof}
Let $\calG$ be a hereditary graph class with finite asymptotic dimension $d$, and let $r\geq 1$.
Let $f\colon\setN\to\setN$ be a $d$-dimensional control function for the graphs in $\calG$.
By a well-known result of Erdős~\cite{Erd59}, there is a graph $F$ with chromatic number greater than $2d+2$ and with no cycles of length less than $2f(r+1)+2$.
Suppose towards a contradiction that $F'\in\calG$ for some $(\leq r)$-subdivision $F'$ of $F$.
The set $V(F')$ comprises the original vertices of $F$ and the new subdividing vertices.
Let $\calU=\bigcup_{i=1}^{d+1}\calU_i$ be a $(d+1,r+1)$-disjoint partition of $V(F')$ into sets of diameter at most $f(r+1)$ in $F'$, where $\calU_i$ is $(d+1)$-disjoint for every $i\in[d+1]$.

Consider a set $S'\in\calU$.
Let $S=S'\cap V(F)$, and suppose that $S$ is non-empty.
Since the diameter of $S'$ in $F'$ is at most $f(r+1)$, so is the diameter of $S$ in $F$.
This implies that $F[S]$ is a tree, because every cycle in $F$ contains a pair of vertices at distance at least $f(r+1)$ in $F$.

Now, let $i\in[d+1]$, and let $U_i=\bigl(\bigcup\calU_i\bigr)\cap V(F)$.
For any distinct $S'_1,S'_2\in\calU_i$, there is no edge $v_1v_2$ in $F$ with $v_1\in S'_1\cap V(F)$ and $v_2\in S'_2\cap V(F)$, because such an edge would imply $d_{F'}(v_1,v_2)\leq r+1$, contradicting the assumption that $\calU_i$ is $(d+1)$-disjoint.
This implies that $F[U_i]$ is a forest.
Consequently, $\chi(F[U_i])\leq 2$.
We conclude that
\[\chi(F)\leq\sum_{i=1}^{d+1}\chi(F[U_i])\leq 2d+2{,}\]
which contradicts the choice of $F$ as a graph with $\chi(F)>2d+2$.

We have shown that no $(\leq r)$-subdivision of $F$ belongs to $\calG$.
Since $\calG$ is hereditary, we conclude that no graph in $\calG$ contains an induced $(\leq r)$-subdivision of $F$.
\end{proof}

\thmasdimcontrol*

\begin{proof}
Let $\calG$ be a hereditary graph class with finite asymptotic dimension $d$.
Let $f\colon\setN\to\setN$ be a $d$-dimensional control function for the graphs in $\calG$, and let $\rho=f(1)$.
It follows that the class $\calG$ is $\rho$-controlled.
Indeed, if $G\in\calG$ and $\calU=\bigcup_{i=1}^{d+1}\calU_i$ is a $(d+1,1)$-disjoint partition of $V(G)$ into sets of diameter at most $\rho$ in $G$, where $\calU_i$ is $1$-disjoint for every $i\in[d+1]$, then
\[\chi(G)\leq\sum_{i=1}^{d+1}\chi\Bigl(\bigcup\calU_i\Bigr)=\sum_{i=1}^{d+1}\max_{S\in\calU_i}\chi(S)\leq(d+1)\max_{v\in V(G)}\chi(N_G^\rho[v]){.}\]

Let $F$ be a graph claimed by \Cref{prop:asdim-subdivision} for $\calG$ and $r=2\rho+5$, so that every graph in $\calG$ excludes induced $(2\rho+5)$-subdivisions of $F$.
Let $t=\max(\size{V(F)},\size{E(F)})$.
Since every proper $(\leq\rho+2)$-subdivision of $K_{t,t}$ contains an induced $(\leq 2\rho+5)$-subdivision of $F$, the graphs in $\calG$ exclude proper $(\leq\rho+2)$-subdivisions of $K_{t,t}$.
Consequently, by \Cref{lem:control-reduction}, the class $\calG$ is $2$-controlled.
\end{proof}

\section{Algorithmic consequences}
\label{sec:effective}

We proceed with stating algorithmic extensions of \Cref{thm:string-b-bounded,thm:region-b-bounded,thm:region-asdim,thm:asdim-b-bounded,thm:control-b-bounded}.
In all these results, $n$ denotes the total size of the input.

An \emph{intersection model} of a string graph $G$ is a region intersection model of $G$ over a planar graph, which describes combinatorially the family of curves in the plane that give rise to the graph $G$.
Thus, if $G$ is assumed to be given along with an intersection model, the total size $n$ of the input is bounded by a polynomial function of $\size{V(G)}+\size{V(H)}$, where $H$ is the underlying planar graph.

Here is an algorithmic extension of \Cref{thm:string-b-bounded}.

\begin{theorem}
\label{thm:string-b-bounded-alg}
There are functions\/ $f,C\colon\setN\times\setN\to\setN$ such that the following holds.
There is an algorithm that, given a string graph\/ $G$ and an intersection model of\/ $G$ in the input, in\/ $n^{C(\omega(G),\beta(G))}$ time computes a proper coloring of\/ $G$ with at most\/ $f(\omega(G),\beta(G))$ colors.
\end{theorem}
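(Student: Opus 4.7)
The plan is to effectivize the entire proof chain leading to \Cref{thm:string-b-bounded} and to execute it on a string graph given with an intersection model. I proceed by induction on $\omega(G)$. The base case $\omega(G)=1$ is immediate, since a single color suffices. For $\omega(G)\geq 2$, I first apply the inductive hypothesis to each $G[N(v)]$, which is a string graph of clique number at most $\omega(G)-1$ and inherits an intersection model of no larger size; this produces a proper coloring of $G[N(v)]$ with $c_1=f(\omega(G)-1,\beta(G))$ colors in $n^{C(\omega(G)-1,\beta(G))}$ time, constructively certifying $\chi(N(v))\leq c_1$.

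The next step is an algorithmic version of \Cref{lem:main}. Its proof in \Cref{sec:main} is already constructive in spirit: it iteratively builds $n$-tidy $s$-clique-multicovers of growing length, and at each stage one searches for an $s$-clique whose two-step neighborhood admits a multicover extension, for a $(\set{N_i}_{i\in I},W)$-embedding of $\burling{k}$ of prescribed shape, or for an independent multicover. Each such search enumerates $n^{O(\ell)}$ candidates and is polynomial with exponent depending only on $\omega(G)$ and $\beta(G)$. By \Cref{lem:burling,lem:alt-burling}, $G$ is $\burling{n}$-free for $n$ bounded in terms of $\beta(G)$, so the search for a full embedding with $\size{I}=1$ and $k=n$ always fails. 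The Ramsey arguments in \Cref{lem:tidy-base,lem:tidy-bound} become exhaustive partition enumerations of arity bounded by these parameters. The outcome at each recursive level is either a proper coloring of the current subset (by composition of colorings on smaller neighborhoods), or a witness whose existence contradicts the $\burling{n}$-freeness of $G$, and hence cannot arise.

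The three external ingredients all admit algorithmic versions. \Cref{lem:independent-multicover} of \cite{CSS21}, given an independent multicover of length $m$ in a string graph, outputs in polynomial time either a $d$-coloring of $C$ or an induced $1$-subdivision of $K_{t,t}$; by \Cref{prop:string-subdivision} the latter cannot occur, so the coloring is always produced. The proof of \Cref{thm:string-2-controlled} in \cite{CSS21} is constructive and uses the intersection model; from colorings of the $N_G^2[v]$ with $c$ colors it builds a proper coloring of the whole graph with $f'(c)$ colors in polynomial time. Finally, \Cref{lem:control-reduction} lifts $\rho$-control to $2$-control by an explicit reduction that preserves polynomial running time. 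Combining these with the effectivized \Cref{lem:main} and the outer induction on $\omega(G)$ yields the claimed algorithm.

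The main obstacle I anticipate is the bookkeeping needed to keep the total running time of the form $n^{C(\omega(G),\beta(G))}$: the multicover construction nests several Ramsey extractions, and the resulting length bound $\ell$ grows rapidly with the parameters (plausibly tower-like in $\omega(G)+\beta(G)$). Ensuring that each enumeration is confined within a bounded-depth recursive frame, and that partial colorings produced by the inductive hypothesis are memoized rather than recomputed, is the main engineering effort, but it introduces no new mathematical content beyond what is already implicit in the existential proof.
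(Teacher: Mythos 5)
Your proposal is correct and follows essentially the same route as the paper: the paper proves \Cref{thm:string-b-bounded-alg} precisely by observing that the existential proof chain (\Cref{thm:b-bounded}, \Cref{lem:main}, and the multicover machinery of \Cref{sec:main}) is already structured to be executable in $n^{C(\omega,\beta)}$ time, and that the external ingredients---\Cref{lem:control-reduction,lem:independent-multicover} and \Cref{thm:string-2-controlled} from the work of Chudnovsky, Scott, and Seymour---admit algorithmic versions. Your sketch of how to effectivize each step (induction on $\omega$, exhaustive search for cliques, embeddings of $\burling{k}$, and independent multicovers, with the Ramsey extractions replaced by bounded enumerations) matches the intended argument.
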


The requirement that the intersection model of $G$ is provided in the input is strong, because there exist string graphs $G$ for which all such models are of size exponential in the size of $G$~\cite{KM91}.
However, we have been unable to avoid it, and this seems to be a general phenomenon in the area \cite{AHW19,BR25,CPW24,MP22}.
Intersection models of segment graphs have size linear in the size of the graph.

Here is an analogous extension of \Cref{thm:region-b-bounded}.

\begin{theorem}
\label{thm:region-b-bounded-alg}
There are functions\/ $f,C\colon\setN\times\setN\times\setN\to\setN$ such that the following holds.
There is an algorithm which, given a number\/ $t\geq 2$, a graph\/ $H$ excluding\/ $K_t$ as a minor, a region intersection graph\/ $G$ over\/ $H$, and a region intersection model of\/ $G$ over\/ $H$ in the input, in\/ $n^{C(t,\omega(G),\beta(G))}$ time, computes a proper coloring of\/ $G$ with at most\/ $f(t,\omega(G),\beta(G))$ colors.
\end{theorem}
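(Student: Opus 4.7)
The plan is to follow the chain of implications used in \Cref{sec:overview} to derive \Cref{thm:region-b-bounded} from \Cref{thm:region-asdim,thm:asdim-b-bounded}, but to make each link algorithmic. The chain passes through \Cref{lem:region-step} (asymptotic dimension), \Cref{thm:asdim-2-controlled} (from asymptotic dimension to $\rho$-control, then to $2$-control via \Cref{lem:control-reduction}), \Cref{prop:asdim-subdivision} (excluded induced $(\leq r)$-subdivision), and finally \Cref{thm:b-bounded}, which is a consequence of \Cref{lem:independent-multicover,lem:main}. Each step is in fact constructive, so the task reduces to extracting the algorithm implicit in each existence proof and composing them.

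For the first step, the constructive form of \Cref{lem:region-step} noted at the end of \Cref{sec:region} computes, in $C(t,1)n$ time, a partition $\set{\calU_i}_{i\in[2^{t-2}]}$ of $V(G)$ in which each $1$-component has radius bounded by a function of $t$. Reading the proof of \Cref{thm:asdim-2-controlled}, a proper coloring of $G$ is obtained by coloring each $\calU_i$ separately with a disjoint palette; each connected component of $G[\calU_i]$ lies inside a ball $N_G^\rho[v]$, so one falls back on recursive calls of the coloring algorithm applied to these balls (each of which is still a region intersection graph over $H$, with a model inherited from $\mu$). The reduction from $\rho$-control to $2$-control via \Cref{lem:control-reduction} is likewise constructive, as are the $\chi$-bounding tools from \cite{CSS21} used along the way; in particular, the algorithmic version of \Cref{lem:independent-multicover} either outputs an induced $1$-subdivision of $K_{t,t}$ or a coloring of $C$ with at most $d$ colors.

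The main algorithmic content lies in a constructive version of \Cref{lem:main}. The proof is inductive on $\omega$, iteratively extracts tidy $s$-clique-multicovers and $(\set{N_i},W)$-embeddings of the marked graphs $\burling{k}$, and at each step either refines the current structure, produces a small-chromatic subset, or locates one of the forbidden substructures (a clique of size $\omega(G)+1$, an induced $1$-subdivision of $K_{t,t}$, or an induced copy of $\burling{k}$ with $k=\beta(G)+1$, each of which yields a contradiction). Each extraction is polynomial once the parameters $t,\omega(G),\beta(G)$ are fixed: the Ramsey arguments in \Cref{lem:tidy-bound,lem:hom-base} act on index sets of size bounded by a function of these parameters, and testing for an embedding of the constant-size graph $\burling{k}$ takes $n^{O(\size{V(\burling{k})})}$ time by brute force. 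Propagating running times through the (bounded-depth) recursion yields a coloring with at most $f(t,\omega(G),\beta(G))$ colors in $n^{C(t,\omega(G),\beta(G))}$ time.

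The main obstacle is to carefully bound the exponent $C(t,\omega(G),\beta(G))$ through the nested recursion. The recursion on $\omega$ in \Cref{lem:main} produces increasingly large Ramsey parameters at each level, and on top of that the reduction via \Cref{thm:asdim-2-controlled} recursively invokes the coloring algorithm on balls; one must verify that the combined recursion depth and per-level cost remain controlled by a single function of $t,\omega(G),\beta(G)$. Since all the recursion depths are themselves bounded by functions of these parameters, a routine analysis suffices, but the details require writing out each step of the chain with its running-time bound.
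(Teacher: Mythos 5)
Your proposal matches the paper's own treatment: \Cref{thm:region-b-bounded-alg} is obtained by composing the effective partition of \Cref{thm:region-asdim-alg}/\Cref{lem:region-step} (packaged in the paper as a control oracle) with algorithmic versions of \Cref{lem:control-reduction}, \Cref{lem:independent-multicover}, and \Cref{lem:main}, exactly the chain you describe. The paper likewise leaves the running-time bookkeeping as a routine inspection of the existential proofs, so your proposal is correct and takes essentially the same approach.
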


The \emph{radius} of a set of vertices $S$ in a graph $G$ is $\min_{x\in V(G)}\max_{y\in S}d_G(x,y)$.
The radius of a set is at most its diameter and at least half of the diameter.
\Cref{thm:region-asdim} can be made into a linear-time \textsf{FPT} algorithm, which follows directly from the proof in \Cref{sec:region}.

\begin{theorem}
\label{thm:region-asdim-alg}
There is a function\/ $C\colon\setN\to\setN$ such that the following holds.
There is an algorithm which, given a number\/ $t\geq 2$, a graph\/ $H$ excluding\/ $K_t$ as a minor, a region intersection graph\/ $G$ over\/ $H$, a region intersection model of\/ $G$ over\/ $H$, and a number\/ $\delta\geq 0$ in the input, in\/ $C(t)n$ time, computes a\/ $(2^{t-2},\delta)$-disjoint partition of\/ $V(G)$ into sets of radius at most\/ $t\delta+1$ in\/ $G$.
\end{theorem}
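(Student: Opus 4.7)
The plan is to invoke \Cref{lem:region-step} with $S = V(G)$ and $\gamma = \delta$, and then refine each output class into its $\delta$-components in $G$. Since $H$ excludes $K_t$ as a minor, $H$ contains no $t$-model, so there is no $(\mu, V(G), \delta, t)$-model. Hence \Cref{lem:region-step} produces, in time $C(t,1)\,n$, a $(\delta, t\delta+1)$-partition $\set{U_1,\ldots,U_m}$ of $V(G)$ in $G$ with $m \leq 2^{t-2}$; if $m < 2^{t-2}$, I pad with empty sets.

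Next, for every $i \in [2^{t-2}]$ I would let $\calU_i$ be the family of $\delta$-components of $U_i$ in $G$. By the definition of $\delta$-components any two distinct members of $\calU_i$ are at $G$-distance greater than $\delta$, so $\calU_i$ is $\delta$-disjoint. By the defining property of a $(\delta, t\delta+1)$-partition every $\delta$-component of $U_i$ has radius at most $t\delta+1$ in $G$. Hence $\calU = \bigcup_{i=1}^{2^{t-2}} \calU_i$ is the desired $(2^{t-2}, \delta)$-disjoint partition of $V(G)$ into sets of radius at most $t\delta+1$ in $G$.

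The main obstacle is to extract the $\delta$-components of each $U_i$ within the $C(t)\,n$ time bound, since a generic BFS-based computation would scale unfavorably with $\delta$ and fail to be \textsf{FPT}. My plan is to modify the recursive algorithm underlying \Cref{lem:region-step} so that, in addition to the partition, it returns the list of $\delta$-components of every class it produces. This requires no extra asymptotic work: the proof of \Cref{lem:region-step} already observes (via the inclusion $V_{i-\delta+1, i+2\delta} \subseteq V(G'_{i-\gamma,\, i+\gamma+\delta})$) that every $\delta$-component of a recursively-built class $U_{r\cdot 2^{t-3}+k}$ in $G$ coincides either with a $\delta$-component of some sub-piece $U_{i,k}$ in the auxiliary graph $G'_{i-\gamma,\, i+\gamma+\delta}$, or, when $(r,k)=(1,1)$, with a $\delta$-component of the base-layer set $S \cap V_{0, 2\gamma+1}$. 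Thus the $\delta$-components lift bottom-up through the recursion by simple concatenation, and at the base case $t=2$ I would obtain the $\delta$-components of the single class $S \cap V_{0, 2\gamma+1}$ by a single BFS in $G$ (of radius at most $\delta$ from each seed vertex, limited to the base-layer set whose radius is at most $2\delta+1$). Since the only additional work per recursive call is linear in the portion of the input processed, the overall running time remains $C(t)\,n$ for an appropriate function $C$.
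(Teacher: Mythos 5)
Your main step is exactly the paper's: since $H$ has no $K_t$ minor there is no $(\mu,V(G),\delta,t)$-model, so \cref{lem:region-step} applied with $S=V(G)$ and $\gamma=\delta$ yields a $(\delta,t\delta+1)$-partition of size at most $2^{t-2}$ in $C(t,1)n$ time; the paper's proof of \cref{thm:region-asdim-alg} consists of precisely this remark. Where you go beyond the paper is in spelling out the conversion to a $(2^{t-2},\delta)$-disjoint partition into sets of radius at most $t\delta+1$, which the paper leaves implicit, and your structural observation is right: because distinct pieces $U_{i,k}$ (and the base layer) are separated by intermediate layers at distance greater than $\delta$, and because $G$-distances of at most $\delta$ between vertices of $V_{i+1,i+\delta}$ agree with distances in $G'_{i-\gamma,i+\gamma+\delta}$, the $\delta$-components do lift through the recursion by concatenation.

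The one step that does not survive scrutiny is your base case. There, $V_{0,2\gamma+1}$ is the entire connected component under consideration, so ``limited to the base-layer set'' limits nothing, and running a depth-$\delta$ BFS from every seed vertex can cost $\Theta(n)$ per seed, i.e.\ $\Theta(n^2)$ overall; this breaks the $C(t)n$ bound you are trying to preserve. The fix is to notice that you never need the exact $\delta$-components, only \emph{some} refinement of each class into a $\delta$-disjoint family of sets of radius at most $\rho$. At the base case the trivial family $\set{S\cap V_{0,2\gamma+1}}$ already has this property: a one-member family is vacuously $\delta$-disjoint, and the set has radius at most $2\gamma+1=\rho$ there. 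Returning this and concatenating up through the recursion (merging $\delta$-components within a class only coarsens the family, which preserves $\delta$-disjointness, and every returned set is a base-layer set of some recursive call, hence satisfies the radius bound supplied by the induction) gives the required $(2^{t-2},\delta)$-disjoint partition with zero additional work per call. With that replacement your argument is complete and the running time analysis of \cref{lem:region-step} carries over unchanged.
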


For a graph $G$, a number $d\geq 0$, and a function $f\colon\setN\to\setN$, a \emph{$d$-dimensional\/ $f$-control oracle} for $G$ is an oracle which, given a set $S\subseteq V(G)$ and a number $\delta\geq 0$ in the input, provides a $(d+1,\delta)$-disjoint partition of $S$ into sets of diameter at most $f(\delta)$ in $G[S]$.
For instance, \Cref{thm:region-asdim-alg} can be used to turn a number $t\geq 2$, a graph $H$ excluding $K_t$ as a minor, a region intersection graph $G$ over $H$, and a region intersection model $\mu$ of $G$ over $H$ into a $(2^{t-2}-1)$-dimensional $(\delta\mapsto 2t\delta+2)$-control oracle for $G$ that encloses $t$, $H$, $G$, and $\mu$ in some kind of internal storage.
Such an oracle then works as follows: given a set $S\subseteq V(G)$ and a number $\delta\geq 0$ in the input, it runs the algorithm claimed in \Cref{thm:region-asdim-alg} on $t$, $H$, $G[S]$, $\mu|_S$, and $\delta$ to compute a $(2^{t-2},\delta)$-disjoint partition of $S$ into sets of radius at most $t\delta+1$ (so diameter at most $2t\delta+2$) in $G[S]$.

Just like \Cref{thm:region-asdim,thm:asdim-b-bounded} imply \Cref{thm:region-b-bounded}, \Cref{thm:region-b-bounded-alg} is a consequence of \Cref{thm:region-asdim-alg} and the following extension of \Cref{thm:asdim-b-bounded}.
(Note that when stating the running time of an algorithm that uses an oracle, we omit the time needed for the oracle to produce the answer to a query, as if the oracle was doing that instantly.)

\begin{theorem}
\label{thm:asdim-b-bounded-alg}
There are functions\/ $f,C\colon\setN\times\setN^\setN\times\setN\times\setN\to\setN$ such that the following holds.
There is an algorithm which, for every function\/ $f'\colon\setN\to\setN$, given a number\/ $d\geq 0$ and a graph\/ $G$ in the input, and given access to a\/ $d$-dimensional\/ $f'$-control oracle for\/ $G$, in\/ $n^{C(t,f',\omega(G),\beta(G))}$ time, computes a proper coloring of\/ $G$ with at most\/ $f(t,f',\omega(G),\beta(G))$ colors.
\end{theorem}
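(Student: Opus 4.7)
The plan is to replay the proof of \Cref{thm:asdim-b-bounded}---namely, the chain \Cref{prop:asdim-subdivision} $\to$ \Cref{thm:asdim-2-controlled} $\to$ \Cref{thm:b-bounded}---with constructive procedures in place of existence statements, using the control oracle as the sole new ingredient. The parameters $d$, $f'$, $k := \omega(G)$, and $r := \beta(G)$ are treated as fixed, so every constant from the earlier proofs ($\rho$, $t$, $n$, $\ell$, $m$, etc.) becomes a computable function of them.

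First, a single oracle call with $\delta = 1$ on any induced subgraph $G[S]$ yields a $(d+1, 1)$-disjoint partition $\calU_1 \cup \cdots \cup \calU_{d+1}$ of $S$ into sets of diameter at most $\rho := f'(1)$ in $G[S]$. This converts the proof of \Cref{thm:asdim-2-controlled} into an algorithm: each $U \in \calU_i$ lies in some ball $N^\rho[v]$, so recursively coloring each such ball and combining with disjoint palettes across the $d+1$ sub-partitions produces a coloring of $G[S]$ using $(d+1) \max_v \chi(N^\rho[v])$ colors. Next, we implement \Cref{lem:control-reduction} algorithmically: its proof in \cite{CSS21} is a bounded-depth iteration of local chromatic-number searches that either succeeds, yielding an $(f_2, 2)$-control certificate, or extracts an induced proper $({\leq}\rho+2)$-subdivision of $K_{t,t}$; the latter is excluded by \Cref{prop:asdim-subdivision} applied with $r$ large enough and is verifiable by enumerating bounded-size vertex sets in $n^{O(1)}$ time.

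The core step is to make the proof of \Cref{lem:main} constructive. Following \Cref{sec:main}, one builds functions $f_1, \ldots, f_k$ such that $G$ is $(f_s, s)$-clique-controlled, each step applying \Cref{lem:final-bound}, which iterates \Cref{lem:tidy-step} from the base \Cref{lem:tidy-bound}. Every appearance of ``$\chi(\cdot) \leq c$'' in these lemmas is replaced by a procedure that either partitions the vertex set exactly as prescribed by the proof---the explicit splits into the $2^h$ classes $W_\Phi$ and then the $(s+1)^\ell$ classes $W'_\Psi$ of \Cref{lem:tidy-step}---and recurses on the parts with disjoint palettes; or it returns one of three bounded-size forbidden certificates: a $(k+1)$-clique (contradicting $\omega(G) \leq k$); an embedding of $\burling{n}$ (contradicting $\beta(G) \leq r$ via \Cref{lem:burling,lem:alt-burling}); or, via the Ramsey argument and \Cref{lem:independent-multicover}, an induced $1$-subdivision of $K_{t,t}$. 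All three certificates have size depending only on the parameters, so their presence or absence can be tested by exhaustive search in $n^{O(1)}$ time with exponent governed by the parameters, and the Ramsey-type enumerations in \Cref{lem:tidy-bound} reduce to brute-force search over partitions of a bounded index set.

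The main obstacle is complexity bookkeeping across the nested recursions---the induction on $s \in [k]$ producing the $f_s$, the downward induction on tidy multicover length in \Cref{lem:final-bound}, and the ball-based recursion driven by the oracle, with each application of \Cref{lem:tidy-step} further splitting the vertex set into a number of pieces that is exponential in the parameters. The key point is that the depth of every such recursion is a function of $d$, $f'$, $k$, $r$ alone, the branching at each level is likewise bounded by a function of those parameters, and the work at each node is $n^{O(1)}$ (including the oracle queries, whose outputs have size at most $n$). An accounting along the lines of that in \Cref{lem:region-step} then shows that the total tree of recursive calls runs in time $n^{C(d,f',k,r)}$ and produces a proper coloring of $G$ with at most $f(d, f', k, r)$ colors, as required.
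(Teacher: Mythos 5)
Your proposal is correct and follows essentially the same route as the paper, which itself only sketches this proof: convert the $d$-dimensional $f'$-control oracle into a $2$-control mechanism via \Cref{prop:asdim-subdivision} and an algorithmic version of \Cref{lem:control-reduction}, and then make the proof of \Cref{lem:main} (via \Cref{lem:tidy-bound,lem:tidy-step,lem:final-bound}) constructive, with bounded-depth, bounded-branching recursion and polynomial work per node. The only cosmetic difference is that the paper packages the first step as a generic reduction to \Cref{thm:control-b-bounded-alg} by wrapping the dimensional oracle as a $(c\mapsto(d+1)c,f'(1))$-control oracle, whereas you inline that conversion.
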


Algorithmic versions of results on $\rho$-controlled graph classes are even more convoluted, as they require two kinds of oracles.
For $\rho\geq 2$, a \emph{$\rho$-bounded coloring oracle} for a graph $G$ is an oracle which, given a set $S\subseteq V(G)$ and a set $S'\subseteq S$ of radius at most $\rho$ in $G[S]$ in the input, provides a proper coloring of $G[S']$.
For $\rho\geq 2$ and a function $f\colon\setN\to\setN$, an \emph{$(f,\rho)$-control oracle} for $G$ is an oracle which, given a set $S\subseteq V(G)$ in the input, and given access to a $\rho$-bounded coloring oracle for $G$, provides a proper coloring of $G[S]$ with at most $f\bigl(\max_{v\in S}\chi(\smash[b]{N_{G[S]}^\rho}[v])\bigr)$ colors.
Note that an algorithm that uses an $(f,\rho)$-control oracle for $G$ need to provide a $\rho$-bounded coloring oracle.

A simple example of a $(c\mapsto mc,\rho)$-control oracle for $G$ and a constant $m$ is one that computes an $(m,1)$-distant partition $\calU=\bigcup_{i=1}^m\calU_i$ of $S$ into sets of radius at most $\rho$ in $G[S]$, where $\calU_i$ is $1$-distant for each $i\in[m]$; the oracle then calls the $\rho$-bounded coloring oracle to obtain proper colorings of these sets, for each $i\in[m]$ using a separate set of colors on the sets in $\calU_i$.
This way, a $d$-dimensional $f$-control oracle can be turned into a $(c\mapsto(d+1)c,f(1))$-control oracle for $G$.
Thus, just like \Cref{thm:control-b-bounded} implies \Cref{thm:asdim-b-bounded}, \Cref{thm:asdim-b-bounded-alg} is a consequence of the following extension of \Cref{thm:control-b-bounded}.

\begin{theorem}
\label{thm:control-b-bounded-alg}
There are functions\/ $f,C\colon\setN\times\setN^\setN\times\setN\times\setN\times\setN\to\setN$ such that the following holds.
There is an algorithm which, for every function\/ $f'\colon\setN\to\setN$, given numbers\/ $t\geq 1$ and\/ $\rho\geq 2$ and a graph\/ $G$ excluding induced\/ $(\leq\rho+2)$-subdivisions of\/ $K_{t,t}$ in the input, and given access to an\/ $(f',\rho)$-control oracle for\/ $G$, in\/ $n^{C(\rho,f',t,\omega(G),\beta(G))}$ time, computes a proper coloring of\/ $G$ with at most\/ $f(\rho,f',t,\omega(G),\beta(G))$ colors.
\end{theorem}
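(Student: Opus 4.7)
The plan is to make each step in the existential chain producing \Cref{thm:control-b-bounded} constructive: first the reduction from $(f',\rho)$-control to $(f'',2)$-control via \Cref{lem:control-reduction}, then the induction on $\omega(G)$ in \Cref{thm:b-bounded}, and finally the multicover machinery of \Cref{lem:main}. I structure the recursion so that each recursive call either strictly decreases $\omega$ or operates on a strictly smaller vertex set, which together with brute-force embedding searches of size bounded by $\beta(G)$ yields the claimed running time.

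First I would implement an algorithmic form of \Cref{lem:control-reduction}: given the $(f',\rho)$-control oracle for $G$, produce an $(f'',2)$-control oracle for $G$ by replaying the Chudnovsky--Scott--Seymour argument, which iteratively decomposes each $\rho$-ball into radius-$2$ pieces in polynomial time, with any decomposition failure certifying an induced $(\leq\rho+2)$-subdivision of $K_{t,t}$---excluded by hypothesis. Next, I would implement \Cref{thm:b-bounded} algorithmically via a double recursion: the outer recursion is on $\size{V(G)}$ and the inner recursion is on $\omega$. The main algorithm invokes the $(f'',2)$-control oracle on $V(G)$ and supplies the required $2$-bounded coloring oracle as follows: on a query for a $2$-ball $N_{G[S]}^2[v]$, it returns the union of (a)~a coloring of $G[N(v)]$ produced by the inner recursion on a graph of clique number strictly less than $\omega(G)$ (on a palette disjoint from the rest), (b)~a fresh color for $v$, and (c)~a coloring of $G[N_{G[S]}^2(v)]$ produced by a recursive call of the main algorithm on the strictly smaller vertex set $N_{G[S]}^2(v)$, which is disjoint from $N[v]$. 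Finally, I would replay \Cref{lem:main}: at each level, build an $n$-tidy $s$-clique-multicover $\calM$ by iterated application of \Cref{lem:tidy-step}, where the partition of $W(\calM)$ into sets $W_\Phi$ is computed by evaluating the predicate $\Phi(v)$ for each $v$ through brute-force search for $(\set{N_{i,\ell}\cap N(v)}_{i\in J},\,W_\ell\setminus(N[v]\cup S))$-embeddings of $\burling{n-\size{J}}$ (with $n$ the parameter from \Cref{lem:burling,lem:alt-burling} depending only on $\beta(G)$), and the refinement by $\Psi$ is analogous. Once the multicover reaches the Ramsey-theoretic length $\ell$ from \Cref{lem:tidy-bound}, an explicit Ramsey extraction locates either a long skew chain (handled by the constructive covering argument in the proof of \Cref{lem:tidy-base}) or a long independent sub-multicover (handled by the algorithmic form of \Cref{lem:independent-multicover}, whose proof in~\cite{CSS21} either colors $C$ explicitly or outputs the excluded $K_{t,t}$-subdivision).

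The main obstacle, as I see it, is the apparent circularity between the main algorithm and the $2$-bounded coloring oracle it must supply: a naive query about a $2$-ball could return all of $V(G)$. This is resolved by the dual recursion described above, which splits every such query into an $\omega$-decreasing call on $G[N(v)]$, a fresh color for $v$, and a size-decreasing call on $G[N_{G[S]}^2(v)]$, so that the $(f'',2)$-control oracle is applied only on strictly smaller operands. The combinatorial subroutines---embedding detection, Ramsey extraction, and multicover growth---each run in $\size{V(G)}^{O_{\omega(G),\beta(G)}(1)}$ time because $\size{V(\burling{n})}$ is bounded by a function of $\beta(G)$ alone; aggregating the costs along the two recursions yields the running time $n^{C(\rho,f',t,\omega(G),\beta(G))}$ and color bound $f(\rho,f',t,\omega(G),\beta(G))$ as claimed.
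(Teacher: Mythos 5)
The paper gives no detailed proof of this theorem---it states that the result follows by inspecting the existential proofs and checking they are effective---and most of your proposal (algorithmizing \Cref{lem:control-reduction}, the $\omega$-induction, the brute-force searches for $\burling{k}$-embeddings and $(\bar p,\bar q)$-small sets, the constructive Ramsey extraction) is exactly that inspection. But your resolution of what you correctly identify as the main obstacle is not how the existential proofs work, and it breaks the running-time claim. In item~(c) you supply the $2$-bounded coloring oracle by recursively calling the main algorithm on $N^2_{G[S]}(v)$, with termination justified by the strict decrease in the size of the vertex set. That recursion has depth up to $n$, and each invocation of the control oracle on a set $S$ may query the coloring oracle for every $v\in S$, so each level branches up to $\size{S}$ ways into sets of size $\size{S}-1$; the recursion tree can therefore have on the order of $n!$ nodes, and there is no useful memoization since the sets $N^2_{G[S]}(v)$ vary with $S$. ``Each call operates on a strictly smaller operand'' proves termination, not polynomial time. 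The color accounting also fails to close under your scheme: if the coloring handed to the control oracle on the $2$-balls already uses the full target number $f(\ldots)$ of colors, any realistic implementation of the control oracle (e.g.\ the partition-based one the paper describes) outputs roughly $f''(f(\ldots)+c_1+1)$ colors, which exceeds $f(\ldots)$.

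The mechanism in the paper's existential proofs that you need to carry over is that the nested second-neighborhood calls are not free-standing recursive calls on smaller vertex sets: each one extends the current $s$-clique-multicover by one level (this is exactly \Cref{lem:tidy-step}, whose hypotheses refer to multicovers of length $\ell+1$ and whose conclusion refers to length $\ell$), and the recursion is cut off at the \emph{constant} length $\ell$ supplied by \Cref{lem:tidy-bound}, where the Ramsey argument plus \Cref{lem:tidy-base} or the algorithmic form of \Cref{lem:independent-multicover} colors the innermost set $W(\calM)$ directly with no further descent. This gives recursion depth $\ell=\ell(\rho,f',t,\omega,\beta)$ and polynomial branching, hence $n^{C(\ldots)}$ time, and it is also what makes the color bound unwind: the number of colors used at multicover length $j$ is $(f_{j+1}\circ\cdots\circ f_\ell)(c_3)$, a quantity defined by backward composition from the base case at length $\ell$, which has no analogue in an unbounded-depth size recursion. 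You describe the multicover iteration separately in your third step, but you never connect it to the coloring-oracle queries of step two; as written, the two recursions are inconsistent, and the one you rely on for the complexity bound is the wrong one.
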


We have provided the proof of \Cref{thm:region-asdim-alg} in \Cref{sec:region}.
\Cref{thm:string-b-bounded-alg,thm:region-b-bounded-alg,thm:asdim-b-bounded-alg,thm:control-b-bounded-alg} can be proved by inspecting the proofs of the corresponding existential statements---\Cref{thm:string-b-bounded,thm:region-b-bounded,thm:asdim-b-bounded,thm:control-b-bounded}, and by verifying that they provide polynomial-time algorithms.
We have structured our proofs in a way that makes this task pretty straightforward.
However, the same work needs to be done for the proofs of \Cref{lem:control-reduction,lem:independent-multicover} and of \Cref{thm:string-2-controlled} in~\cite{CSS21}, and for the proof of the result in~\cite{CSS17} that contains the core of the proof of \Cref{lem:independent-multicover}, as they are essential in our proofs of \Cref{thm:string-b-bounded-alg,thm:region-b-bounded-alg,thm:asdim-b-bounded-alg,thm:control-b-bounded-alg}.
We have verified that suitable algorithmic versions of \Cref{lem:control-reduction,lem:independent-multicover} and of \Cref{thm:string-2-controlled} indeed follow from the proofs in~\cite{CSS17,CSS21}.

\section{Concluding remarks}
\label{sec:conclusion}

Burling graphs have been used to show that a number of graph classes are not $\chi$-bounded \cite{CELO16,CKP25,Dav23,FJM+18,PKK+13,PKK+14,PT24a,PT24b}.
In this paper, we introduced new techniques for instead showing Burling-control.
Morally, Burling graphs can now be seen as a weak counterexample to a graph class being $\chi$-bounded, since it leaves Burling-control open.
Thus, Burling-control should now be considered for the graph classes for which Burling graphs were used as a counterexample to $\chi$-boundedness.
Beyond the classes for which we have already proved Burling-control in this paper, we discuss a few more.

The original application of Burling graphs by Burling~\cite{Bur65} in 1965 was to show that intersection graphs of axis-parallel boxes in $\setR^3$ are not $\chi$-bounded.
However there are now other such constructions for box intersection graphs \cite{Dav21,MM11,RA08}, thus they are not Burling-controlled.

A \emph{wheel} is a graph consisting of an induced cycle of length at least $4$ and a single additional vertex adjacent to at least $3$ vertices on the cycle.
Trotignon~\cite{Tro13} conjectured that graphs without an induced wheel are $\chi$-bounded.
However, this and a more general conjecture of Scott and Seymour~\cite{SS20c} were disproved using Burling graphs~\cite{Dav23,PT23}.
We conjecture that Burling graphs are the only such counterexample.

\begin{conjecture}
The class of graphs with no induced wheel is Burling-controlled.
\end{conjecture}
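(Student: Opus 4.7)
The plan is to apply the blueprint of \Cref{thm:b-bounded}: the structural heart \Cref{lem:main} uses the excluded $(\leq 2\rho+5)$-subdivision hypothesis only through \Cref{lem:independent-multicover}, which bounds $\chi(C)$ on the deep set of any long independent multicover. Since a $1$-subdivision of $K_{t,t}$ is bipartite and hence wheel-free, \Cref{lem:independent-multicover} is unavailable in our setting. The conjecture thus reduces to two tasks: (a) prove that wheel-free graphs are $2$-controlled, and (b) prove a wheel-free analogue of \Cref{lem:independent-multicover}, namely that under the natural inductive hypothesis on $\omega$, long independent multicovers in a wheel-free graph satisfy $\chi(C) \leq d$.

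For (a), I would follow the BFS-levellings approach of Chudnovsky, Scott, and Seymour underlying \Cref{thm:string-2-controlled} and the broader $2$-control programme of Scott and Seymour. Given $v \in V(G)$ and the BFS levels around $v$, a failure of $2$-control concentrates large chromatic number in far levels; the standard extraction then aims to thread three induced paths back toward three neighbours of $v$ and close them into an induced cycle in $G - v$ passing through three neighbours of $v$ -- an induced wheel with hub $v$. Because the wheel is a very simple obstruction, this specialization of the Scott--Seymour programme appears tractable.

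For (b), given an independent multicover $\calM = (\set{x_i}_{i\in[m]}, \set{N_i}_{i\in[m]}, C)$ in a wheel-free graph with $\chi(C)$ very large, the goal is to produce an induced wheel. For each $c \in C$, select witnesses $y_i \in N_i \cap N(c)$ and apply iterated Ramsey-style purification as in \Cref{lem:hom-base} to pass to a large-chromatic subset $C' \subseteq C$ on which the adjacency pattern of the $y_i$'s among themselves and with the $x_j$'s is uniform. Using the inductive bound $\chi(N(v)) \leq c_1$ (from the $\omega$-induction) together with (a), one should be able to arrange that some $c \in C'$ sees three pairwise nonadjacent witnesses $y_{i_1}, y_{i_2}, y_{i_3}$ and that $G[C' \setminus N[c]]$ still has enough chromatic number to contain, via $2$-control, an induced path linking small neighbourhoods of the three $y_{i_j}$'s; closing such a path through $y_{i_1}, y_{i_2}, y_{i_3}$ yields an induced cycle of length at least $4$ in $G - c$ meeting $N(c)$ in exactly $\set{y_{i_1}, y_{i_2}, y_{i_3}}$, i.e.\ a wheel with hub $c$.

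The main obstacle will be the last step: constructing the induced cycle through $y_{i_1}, y_{i_2}, y_{i_3}$ while keeping all other cycle vertices outside $N(c)$ and ensuring induced-ness in $G$. The $x_i$'s may interact arbitrarily with each other and with $C$, and vertices of $C \setminus \set{c\}$ may have many neighbours in $N(c)$; the Ramsey purification above will have to be strengthened to ``sterilise'' the chromatically rich piece $G[C' \setminus N[c]]$ of these uncontrolled adjacencies before the $2$-control extraction can be invoked. This sterilisation-plus-extraction step is, in my view, the true technical difficulty of the conjecture, and it may require either iterating the multicover construction several more times or pushing beyond the base $2$-control obtained in (a) to a stronger local-density statement specific to the wheel-free setting.
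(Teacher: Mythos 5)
The statement you are addressing is posed in the paper as an open conjecture; the authors give no proof of it, so there is no argument of theirs to compare yours against. Your submission is likewise not a proof but a research plan, and it contains genuine gaps: both of the essential ingredients you identify---(a) that wheel-free graphs are $2$-controlled, and (b) a wheel-free substitute for \Cref{lem:independent-multicover} bounding $\chi(C)$ over long independent multicovers---are left entirely unproven, and you yourself flag the ``sterilisation-plus-extraction'' step in (b) as the true difficulty without resolving it. Neither (a) nor (b) follows from anything in the paper: the class of wheel-free graphs is not known to be $\rho$-controlled for any $\rho$, and since it is not of the form ``graphs excluding induced subdivisions of a fixed $F$,'' none of \Cref{lem:control-reduction}, \Cref{thm:string-2-controlled}, or the Scott--Seymour control results quoted in \Cref{thm:bloated-control} applies. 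So as a proof the submission fails at both of its load-bearing steps.

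That said, your structural reading of the paper's machinery is accurate and worth recording. You correctly observe that a $1$-subdivision of $K_{t,t}$ is itself wheel-free (every subdivision vertex has degree $2$, and no induced cycle avoiding a branch vertex $u$ can pass through a subdivision vertex of an edge at $u$), so \Cref{thm:b-bounded} cannot be invoked as a black box; and you correctly isolate \Cref{lem:independent-multicover} as the unique place where the excluded-subdivision hypothesis enters \Cref{lem:main}, so that the conjecture indeed reduces to supplying $2$-control plus a wheel-producing analogue of that lemma. This is a sensible decomposition of the open problem, but it is a decomposition, not a solution.
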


As discussed, Burling graphs were used to disprove Scott's conjecture that the class of graphs forbidding induced subdivisions of a fixed graph $F$ is $\chi$-bounded.
Since then, Burling graphs have been used to study more closely for which graphs $F$ Scott's conjecture is true \cite{CELO16,PT24a,PT24b}.
The most general positive result (generalizing trees and long cycles) is that Scott's conjecture is true when $F$ is a banana tree, that is, a graph obtained from a tree by replacing every edge $uv$ by some number of internally vertex disjoint paths connecting $u$ and $v$~\cite{SS20a}.
A corollary to \Cref{conj:subdivision-b-bounded} would be that Scott's conjecture is true for $F$ if and only if a subdivision of $F$ is a Burling graph.

As a corollary of \Cref{thm:b-bounded} and some $2$-control results of Scott and Seymour~\cite{SS20a}, we prove special cases of \Cref{conj:subdivision-b-bounded} for some new graphs $F$, and also some new special cases of Scott's original, false conjecture.
We say that a graph is a \emph{bloated cycle} if it is obtainable from a cycle by replacing all but one edge by a collection of internally disjoint paths between its endpoints.
A graph is a \emph{bloated triangle} if it is obtainable from a triangle by replacing two of its edges by a collection of internally disjoint paths between its endpoints, and its third edge by a collection of at most two internally disjoint paths between its endpoints.
Lastly, we say that a graph is a \emph{bloated bridge} if it is obtainable from a graph consisting of a path $v_1,\ldots,v_k$ with $k\geq 4$ by adding two vertices $a,b$ with edges $ab,av_1,av_2,bv_{k-1},bv_k$ by replacing each edge $v_iv_{i+1}$ by a collection of internally disjoint paths between its endpoints.
Scott and Seymour~\cite{SS20a} proved that excluding induced subdivisions of any one of these graphs gives a graph class that is $\rho$-controlled for some $\rho\geq 2$ and therefore, by \Cref{lem:control-reduction}, $2$-controlled.

\begin{theorem}[Scott, Seymour {\cite[7.2, 8.2, 9.1]{SS20a}}]
\label{thm:bloated-control}
If\/ $F$ be a bloated cycle, triangle, or bridge, then the class of graphs excluding induced subdivisions of\/ $F$ is\/ $2$-controlled.
\end{theorem}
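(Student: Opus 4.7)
My plan is to invoke \Cref{lem:control-reduction} to reduce the desired $2$-control to $\rho$-control for some finite $\rho \geq 3$. This reduction applies because a bloated cycle, triangle, or bridge $F$ has finitely many vertices and edges; taking $t = \size{V(F)} + \size{E(F)}$ and choosing $\rho$ large enough that every proper $(\leq\rho+2)$-subdivision of $K_{t,t}$ contains an induced subdivision of $F$ as a subgraph, we see that excluding induced subdivisions of $F$ forbids induced proper $(\leq\rho+2)$-subdivisions of $K_{t,t}$. Hence it suffices to prove, for each such $F$, the existence of $\rho$ such that the class of graphs with no induced subdivision of $F$ is $\rho$-controlled.

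For $\rho$-control I would follow the standard Scott--Seymour BFS scheme. Given an induced subgraph $H$ with $\max_{v \in V(H)} \chi(N_H^\rho[v]) \leq c$ but $\chi(H)$ huge compared to iterated Gyárfás/Ramsey parameters to be fixed, pick a root $v_0$ and define levels $L_i = \set{v \in V(H) \colon d_H(v_0,v) = i}$. Iterated ``Gyárfás descent''---repeatedly passing to a chromatic-rich neighborhood one level down---yields a very long induced path $P = u_0 u_1 \cdots u_m$ with $u_i \in L_{i_0+i}$, together with a chromatic-rich ``shadow'' set $A \subseteq V(H)$ whose vertices can reach $P$ by short uphill paths. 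The spine of $F$ is then realized as a subpath of $P$: for a bloated cycle we close the spine by an additional induced path found inside $A$; for a bloated triangle we locate three spine vertices so that the triangle closes via induced paths in $A$, the once-bloated edge requiring only at most two parallel paths; for a bloated bridge the path $v_1,\ldots,v_k$ is a subpath of $P$ and the two apex vertices $a,b$ are located in $A$ through an auxiliary BFS round targeting levels near $v_1,v_2$ and $v_{k-1},v_k$. Finally, each bloated edge of $F$ is filled by a packet of internally disjoint induced paths, extracted by iterating chromatic pigeonholing inside progressively pruned chromatic-rich subsets and aligned via Ramsey so that the packets share a uniform attachment structure.

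The hard part will be ensuring the final object is an \emph{induced} subdivision, not merely a subdivision, so that the no-induced-subdivision hypothesis is actually violated. This is handled by a ``cleaning'' step following each extraction: before committing to a new piece of spine or a new parallel path, discard from the current chromatic-rich working set every vertex whose $\rho$-neighborhood would create an unwanted chord with the already-fixed structure. Since every $\rho$-neighborhood has chromatic number at most $c$, each cleaning costs only a bounded amount of chromatic number, and iterating it a bounded number of times still leaves an ample chromatic reservoir provided $\chi(H)$ was chosen sufficiently large at the outset. The three cases---bloated cycle \cite[7.2]{SS20a}, bloated triangle \cite[8.2]{SS20a}, and bloated bridge \cite[9.1]{SS20a}---differ only in the combinatorics of the spine (how it closes or where apex vertices attach) and the number of parallel induced paths each bloated edge demands; the BFS-descent-plus-cleaning framework is common to all three, and the final application of \Cref{lem:control-reduction} is what delivers $2$-control in each instance.
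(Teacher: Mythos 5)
Your first paragraph is exactly the paper's proof: the class is $\rho$-controlled for some $\rho$ by the cited results of Scott and Seymour, excluding induced subdivisions of $F$ forbids induced proper $(\leq\rho+2)$-subdivisions of $K_{t,t}$ for $t\geq\max(\size{V(F)},\size{E(F)})$, and \Cref{lem:control-reduction} then yields $2$-control. Your remaining two paragraphs set out to reprove the underlying $\rho$-control results themselves, which the paper simply cites as \cite[7.2, 8.2, 9.1]{SS20a}; as written that part is only a high-level plan (the extraction of the parallel path packets and the induced-ness cleaning are precisely the hard technical content of those proofs), but it is not needed to match the paper's argument.
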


Combining \Cref{thm:bloated-control} with \Cref{thm:b-bounded}, we obtain the following.

\begin{theorem}
\label{thm:bloated-b-bounded}
If\/ $F$ is a bloated cycle, triangle, or bridge, then the class of graphs excluding induced subdivisions of\/ $F$ is Burling-controlled.
\end{theorem}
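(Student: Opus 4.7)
The plan is to deduce the theorem directly from \Cref{thm:bloated-control} and \Cref{thm:b-bounded}, mirroring the template already used in the proofs of \Cref{thm:string-b-bounded,thm:asdim-b-bounded,thm:control-b-bounded}. \Cref{thm:bloated-control} immediately supplies the first hypothesis of \Cref{thm:b-bounded}: the class of graphs excluding induced subdivisions of $F$ is $2$-controlled. So the only remaining task is to show that this class excludes induced $1$-subdivisions of $K_{t,t}$ for some $t$ depending on $F$.

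To produce such a $t$, I would take $t = \max(\size{V(F)},\size{E(F)})$ and argue that every $1$-subdivision of $K_{t,t}$ contains an induced subdivision of $F$. The verification is routine: map the vertices of $F$ injectively into one side $A$ of $K_{t,t}$ (possible since $\size{A}=t\geq\size{V(F)}$), and for each edge $uv$ of $F$ pick a private vertex $b_{uv}$ on the other side $B$ (possible since $\size{B}=t\geq\size{E(F)}$) together with the two $K_{t,t}$-subdividers of the edges from $u$ and $v$ to $b_{uv}$; these form a length-four induced path from the image of $u$ to the image of $v$ in the $1$-subdivision of $K_{t,t}$. The induced property holds because each chosen $b_{uv}$ has only two neighbors among the chosen subdividers, namely the two on its own path, while each chosen vertex in $A$ has as $K_{t,t}$-subdivider neighbors only those lying on the paths for the edges of $F$ incident to it. Consequently every graph in the class, excluding induced subdivisions of $F$, excludes in particular induced $1$-subdivisions of $K_{t,t}$.

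With both hypotheses of \Cref{thm:b-bounded} verified, that theorem delivers Burling-control. There is essentially no obstacle beyond those already cleared in \Cref{thm:bloated-control,thm:b-bounded}; the only mild point to check is the standard embedding of a $3$-subdivision of $F$ into the $1$-subdivision of $K_{t,t}$ as an induced subgraph, which is precisely the observation already invoked in the proof of \Cref{thm:asdim-b-bounded}.
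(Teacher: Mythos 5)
Your proposal is correct and matches the paper's (essentially one-line) proof: combine \Cref{thm:bloated-control} with \Cref{thm:b-bounded}, the only detail being that a $1$-subdivision of $K_{t,t}$ with $t=\max(\size{V(F)},\size{E(F)})$ contains an induced $3$-subdivision of $F$, exactly as in the proofs of \Cref{thm:asdim-b-bounded,thm:control-b-bounded}. Your explicit verification of that embedding is fine.
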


\Cref{thm:bloated-b-bounded} also yields some new $\chi$-bounded graph classes.
We give one such example.
Let $C_5^<$ be the graph obtained from the $5$-cycle by adding two chords that share a vertex.
Since every subdivision of $C_5^<$ has a subdivision which is a Burling graph \cite[Theorem~4.5]{PT24b}, we obtain the following as a consequence of \Cref{thm:bloated-b-bounded}.

\begin{theorem}
If\/ $F$ is a subdivision of\/ $C_5^<$, then the class of graphs excluding induced subdivisions of\/ $F$ is\/ $\chi$-bounded.
\end{theorem}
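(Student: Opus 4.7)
The plan is to combine \Cref{thm:bloated-b-bounded} with the cited fact that every subdivision of $C_5^<$ admits a further subdivision that is a Burling graph, using the latter to turn a Burling-control bound into a $\chi$-bound.

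First, I would verify that every subdivision of $C_5^<$ is a bloated triangle. Labelling the $5$-cycle as $v_1v_2v_3v_4v_5$ with chords $v_1v_3$ and $v_1v_4$, the graph $C_5^<$ arises from the triangle on $\set{v_1,v_3,v_4}$ by replacing the edge $v_1v_3$ with the two internally disjoint $v_1$--$v_3$ paths $v_1v_3$ and $v_1v_2v_3$, replacing $v_1v_4$ with the two internally disjoint $v_1$--$v_4$ paths $v_1v_4$ and $v_1v_5v_4$, and leaving $v_3v_4$ as a single path; this exactly matches the definition (two edges replaced by arbitrary collections of internally disjoint paths, the third by at most two). Subdividing only lengthens the constituent paths, so any subdivision $F$ of $C_5^<$ is also a bloated triangle.

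Consequently, by \Cref{thm:bloated-b-bounded} applied to $F$, the class $\calC$ of graphs excluding induced subdivisions of $F$ is Burling-controlled: there exists a function $g\colon\setN\times\setN\to\setN$ such that every $G\in\calC$ satisfies $\chi(G)\leq g(\omega(G),\beta(G))$. By the cited Theorem~4.5 of \cite{PT24b}, the subdivision $F$ of $C_5^<$ has a further subdivision $F'$ that is a Burling graph. Hence $F'$ is (isomorphic to) an induced subgraph of $B_{k_0}$ for some $k_0\geq 1$, and since $(B_k)_{k\geq 1}$ is an increasing sequence, $F'$ embeds as an induced subgraph of $B_k$ for every $k\geq k_0$.

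Now fix any $G\in\calC$. Since $G$ contains no induced subdivision of $F$ and $F'$ is one such subdivision, $G$ does not contain $F'$ as an induced subgraph, and therefore $G$ does not contain $B_k$ as an induced subgraph for any $k\geq k_0$. Thus $\beta(G)\leq k_0-1$, and the Burling-control bound collapses to $\chi(G)\leq g(\omega(G),k_0-1)$, which is a function of $\omega(G)$ alone. This shows that $\calC$ is $\chi$-bounded, completing the proof.

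There is no real obstacle to overcome: once one notices the structural observation that $C_5^<$ (and hence all its subdivisions) is a bloated triangle, the result is a clean two-step corollary of \Cref{thm:bloated-b-bounded} and the cited fact from \cite{PT24b}. The only conceptual point worth emphasising is that the role of the Burling-graph subdivision $F'$ of $F$ is precisely to bound $\beta$ uniformly on $\calC$, which is exactly what is needed to convert a Burling-control statement into a $\chi$-boundedness statement.
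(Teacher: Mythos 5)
Your proposal is correct and follows exactly the route the paper intends: the paper derives this statement in one sentence as a consequence of \cref{thm:bloated-b-bounded} together with the cited fact that every subdivision of $C_5^<$ has a subdivision that is a Burling graph, and your write-up simply fills in the (correct) details that subdivisions of $C_5^<$ are bloated triangles and that the Burling-graph subdivision $F'$ forces $\beta(G)\leq k_0-1$ on the whole class, collapsing Burling-control to $\chi$-boundedness.
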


Since the class of sphere intersection graphs in $\setR^d$ has finite asymptotic dimension for every $d\geq 2$~\cite{DGHM25}, it follows by \Cref{thm:asdim-b-bounded} that these classes are Burling-controlled.
(Note that they do contain Burling graphs~\cite{PKK+13}.)
This can be seen as a high-dimensional generalization of the fact that circle graphs are $\chi$-bounded~\cite{Gya85}.

\begin{theorem}
The class of sphere intersection graphs in\/ $\setR^d$ is Burling-controlled for every\/ $d\geq 2$.
\end{theorem}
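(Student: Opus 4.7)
The plan is to derive this theorem as an immediate corollary of \Cref{thm:asdim-b-bounded} together with the finite asymptotic dimension bound from \cite{DGHM25}. No new technical work should be required; the only thing to check is that the hypotheses of \Cref{thm:asdim-b-bounded} are satisfied by the class of sphere intersection graphs in $\setR^d$.

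First I would verify that the class of sphere intersection graphs in $\setR^d$ is hereditary. This is straightforward: given a sphere intersection representation of a graph $G$, removing a vertex $v$ of $G$ simply corresponds to deleting the sphere representing $v$ from the model, which yields a sphere intersection representation of $G-v$. Thus the class is closed under taking induced subgraphs. Next I would invoke the result of \cite{DGHM25} that the class of sphere intersection graphs in $\setR^d$ has finite asymptotic dimension for every $d\geq 2$; this is recalled in the introduction of the paper.

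With these two facts in hand, the theorem follows by a direct application of \Cref{thm:asdim-b-bounded}, which asserts that every hereditary graph class with finite asymptotic dimension is Burling-controlled. I would conclude the proof by also noting, as the authors do in the paragraph preceding the theorem, that this generalizes the classical fact that circle graphs are $\chi$-bounded \cite{Gya85}, because circle graphs embed into $1$-sphere intersection graphs in $\setR^2$, and can be viewed as a high-dimensional analog of this result (though now allowing Burling-type witnesses in place of only cliques, since for $d\geq 2$ these classes genuinely contain Burling graphs, by \cite{PKK+13}).

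The main ``obstacle'' is not really an obstacle at all: all the substantial work is done in \Cref{thm:asdim-b-bounded} (and, upstream, in \Cref{thm:b-bounded} and \Cref{lem:main}) and in the finite asymptotic dimension bound of \cite{DGHM25}. The only subtle point is the verification of heredity, which is immediate from the definition of the representation. Hence the proof should occupy no more than a couple of lines in the final paper.
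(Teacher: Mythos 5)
Your proposal is correct and follows exactly the paper's own derivation: heredity of sphere intersection graphs plus the finite asymptotic dimension bound of \cite{DGHM25}, fed into \Cref{thm:asdim-b-bounded}. The closing remarks about circle graphs and about these classes containing Burling graphs \cite{PKK+13} are likewise the ones the paper makes and are inessential to the proof itself.
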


It is easy to see that for every proper minor-closed graph class $\calH$ and every graph $F\notin\calH$, region intersection graphs over $\calH$ exclude proper subdivisions of $F$ as induced minors.
The following is a variant of a question of Bonamy et~al.\ \cite[Question~5]{BBE+23} and of a conjecture of Georgakopoulos and Papasoglu \cite[Conjecture~1.3]{GP25}; it strictly generalizes \Cref{thm:region-asdim} in view of the result of~\cite{BH25}.

\begin{conjecture}
\label{conj:induced-minors-asdim}
For every graph\/ $F$, the class of graphs excluding\/ $F$ as an induced minor has finite asymptotic dimension.
\end{conjecture}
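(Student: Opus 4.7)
The plan is to reduce \Cref{conj:induced-minors-asdim} to \Cref{thm:region-asdim} by establishing the following representation theorem: for every graph $F$, there exists $t = t(F)$ such that every graph $G$ excluding $F$ as an induced minor admits a region intersection model over some graph $H$ that excludes $K_t$ as a minor. Once such a representation is produced, \Cref{thm:region-asdim} immediately yields asymptotic dimension at most $2^{t-2}-1$, and the conjecture follows. The comment in the excerpt that \Cref{conj:induced-minors-asdim} strictly generalizes \Cref{thm:region-asdim} in view of [BH25] suggests that this route is the intended one, with [BH25] supplying one direction of this representation for restricted $F$.

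For the base cases I would begin with $F$ a path or a cycle, where excluding $F$ as an induced minor places very strong restrictions on the BFS layer structure of $G$ and one can build the desired region model by hand from shortest path layerings. The next natural target is $F = K_t$: here, working again with a BFS layering $V_0,V_1,V_2,\ldots$ rooted at a fixed vertex, one would assign to each vertex $v$ a branch set made up of the connected components of successive layer neighborhoods of $v$, and argue that the absence of an induced $K_t$ in $G$ forces the underlying host graph of this assignment to exclude a large complete minor. A Ramsey-type reduction, analogous to the one used in \Cref{lem:independent-multicover,lem:tidy-bound}, would then extend the argument from $F = K_t$ to an arbitrary fixed graph $F$, by locating inside any sufficiently "wild" candidate for a counterexample a highly structured induced substructure that already contains $F$ as an induced minor.

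The main obstacle, and the reason this remains a conjecture rather than a theorem, is the absence of any structural characterization of graphs excluding a fixed induced minor. Unlike proper minor-closed classes, which are tightly controlled by the graph minor structure theorem and satisfy asymptotic dimension at most $2$ by [BBE+23], induced-minor-closed classes can have unbounded tree-width, unbounded clique-width, and, as this paper shows, unbounded chromatic number even in the presence of very restrictive excluded induced subdivisions. The representation theorem sketched above is itself far-reaching---it asserts that, up to coarse equivalence by region intersection, induced-minor exclusion is captured by ordinary minor exclusion---and is closely tied to Conjecture~1.3 of Georgakopoulos and Papasoglu mentioned in the excerpt. I would therefore expect a full resolution to require substantial new ideas from coarse graph theory, in particular a better understanding of the rough geometry of graphs without a fixed grid or wall as an induced minor; attacking the cases $F = K_{s,t}$ and $F$ a subdivided theta graph first seems the most promising proving ground.
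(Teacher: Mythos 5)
The first thing to note is that \Cref{conj:induced-minors-asdim} is not proved in this paper: it is stated as an open conjecture, presented as a variant of a question of Bonamy et al.\ and of a conjecture of Georgakopoulos and Papasoglu, with only the special case where $F$ is a subdivision of $K_4$ known (due to Albrechtsen, Jacobs, Knappe, and Wollan, by directly bounding the asymptotic dimension by $1$). So there is no proof in the paper to compare yours against, and your proposal is, as you yourself acknowledge, a research plan rather than a proof.

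The concrete gap is that your entire reduction rests on the unproven ``representation theorem'' that every graph excluding $F$ as an induced minor admits a region intersection model over some $K_{t(F)}$-minor-free graph. This is the converse of the easy direction recorded just before the conjecture (region intersection graphs over $\calH$ exclude proper subdivisions of $F\notin\calH$ as induced minors), and it is at least as hard as \Cref{conj:induced-minors-asdim} itself; worse, the paper's remark that the conjecture \emph{strictly} generalizes \Cref{thm:region-asdim} in view of the cited result suggests that the class of graphs excluding a fixed induced minor is strictly larger than the corresponding class of region intersection graphs, in which case your representation theorem would be false as stated and the reduction could not work in this form. Note also that in the one case where the conjecture is known ($F$ a subdivision of $K_4$), the proof does not proceed by exhibiting a region intersection model. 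Beyond this, none of your intermediate steps is actually carried out: for $F$ a long cycle, excluding $F$ as an induced minor amounts to excluding long induced cycles, and it is not at all clear that a BFS layering yields the desired model ``by hand''; and the proposed Ramsey-type bootstrapping from $F=K_t$ to general $F$ is only an analogy, since the extraction arguments of \Cref{lem:independent-multicover} and \Cref{lem:tidy-bound} operate on induced subgraphs and multicovers, not on induced minors. In short, your proposal correctly situates the problem and identifies its difficulty, but every step of the proposed route remains to be proved, and its central step may well be false.
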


In view of \Cref{thm:asdim-b-bounded}, the conjecture above would imply that the class of graphs excluding any fixed graph $F$ as an induced minor is Burling-controlled.
Albrechtsen, Jacobs, Knappe, and Wollan~\cite{AJKW24} proved the special case of \Cref{conj:induced-minors-asdim} for graphs $F$ that are subdivisions of $K_4$ (showing that the asymptotic dimension is at most $1$).
Therefore, we obtain the following.

\begin{theorem}
If\/ $F$ is a subdivision of\/ $K_4$, then the class of graphs excluding\/ $F$ as an induced minor is Burling-controlled.
\end{theorem}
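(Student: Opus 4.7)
The plan is to obtain this statement as an immediate corollary of two results already at our disposal: the finite asymptotic dimension bound of Albrechtsen, Jacobs, Knappe, and Wollan~\cite{AJKW24}, and our \Cref{thm:asdim-b-bounded}. Let $\calG$ denote the class of graphs excluding a fixed subdivision $F$ of $K_4$ as an induced minor.

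First I would verify that $\calG$ is hereditary. This is routine: if $G\in\calG$ and $H$ is an induced subgraph of $G$, then any induced minor of $H$ is also an induced minor of $G$, so $H$ cannot contain $F$ as an induced minor either, giving $H\in\calG$. Next I would invoke the theorem of Albrechtsen, Jacobs, Knappe, and Wollan~\cite{AJKW24}, which states precisely that when $F$ is a subdivision of $K_4$, the class $\calG$ has asymptotic dimension at most $1$. Finally I would apply \Cref{thm:asdim-b-bounded} to the hereditary class $\calG$ of finite asymptotic dimension, concluding that $\calG$ is Burling-controlled.

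There is essentially no obstacle here, since the work has been done upstream: the nontrivial geometric input is the asymptotic-dimension bound of~\cite{AJKW24}, and the nontrivial structural input is \Cref{thm:asdim-b-bounded} (which itself reduces, via \Cref{thm:asdim-2-controlled} and \Cref{prop:asdim-subdivision}, to \Cref{thm:b-bounded} and ultimately to \Cref{lem:main}). The only thing to emphasize is that the hypothesis of \Cref{thm:asdim-b-bounded} is matched exactly: $\calG$ is hereditary and has finite asymptotic dimension, so no further verification (e.g.\ of a forbidden induced subdivision) is required. The proof is therefore just a two-line chain of citations.
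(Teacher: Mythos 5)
Your proposal is correct and matches the paper's argument exactly: the theorem is obtained by combining the asymptotic-dimension bound of Albrechtsen, Jacobs, Knappe, and Wollan~\cite{AJKW24} for graphs excluding a subdivision of $K_4$ as an induced minor with \Cref{thm:asdim-b-bounded}. The heredity check you include is routine and implicit in the paper.
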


In \Cref{thm:string-b-bounded-alg}, we claim that an $f(\omega(G),\beta(G))$-coloring of a string graph $G$ given in the input along with its intersection model can be computed in $n^{C(\omega(G),\beta(G))}$ time.
We conjecture that this can be achieved without requiring an intersection model in the input.

\begin{conjecture}
\label{conj:string-b-bounded-alg}
There are functions\/ $f,C\colon\setN\times\setN\to\setN$ and an algorithm that takes a graph\/ $G$ of size\/ $n$ in the input and in\/ $n^{C(\omega(G),\beta(G))}$ time either computes an\/ $f(\omega(G),\beta(G))$-coloring of\/ $G$ or correctly reports that\/ $G$ is not a string graph.
\end{conjecture}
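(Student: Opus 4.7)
The plan is to remove the dependence on the intersection model in \Cref{thm:string-b-bounded-alg}, using the $n^{C(\omega(G),\beta(G))}$ time budget to brute-force the parameters and small witnesses. First, I would determine $k=\omega(G)$ and $r=\beta(G)$ by brute-force search in time polynomial in $n$ with exponents depending on $k$ and $r$, so that these values become available to the rest of the algorithm without being part of the input. Next, I would check in $n^{O(1)}$ time whether $G$ contains an induced $1$-subdivision of $K_{3,3}$; if it does, then by \Cref{prop:string-subdivision} we output ``not a string graph''. Otherwise, we attempt to produce a proper $f(k,r)$-coloring of $G$ by running an adapted version of the algorithm of \Cref{thm:string-b-bounded-alg}, outputting ``not a string graph'' only if a specific bounded-size combinatorial obstruction is detected along the way.

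The heart of the argument is to simulate the proof of \Cref{thm:b-bounded} algorithmically. That proof operates on purely graph-theoretic objects---multicovers, $s$-clique-multicovers, tidy variants, and embeddings of the marked graphs $\burling{k}$---each of which is enumerable or constructible in $n^{C(k,r)}$ time once $k$ and $r$ are fixed. Inspecting the proof of \Cref{thm:string-b-bounded-alg}, the intersection model is used in exactly one place: the construction, via \Cref{thm:region-asdim-alg}, of an $(f,2)$-control oracle for $G$ together with the accompanying $2$-bounded coloring oracle. So the task reduces to providing these two oracles from $G$ alone---equivalently, to obtaining a model-free algorithmic version of \Cref{thm:string-2-controlled}.

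The main obstacle is the $2$-bounded coloring oracle. A radius-$2$ subgraph $H=G[N_G^2[v]]$ of a string graph is itself a string graph, but no polynomial-time coloring algorithm is known for arbitrary string graphs given only the graph. A natural attempt is to interleave the oracle provision with the outer induction on $\omega$ performed in the proof of \Cref{thm:b-bounded}: split $H$ into $G[N_G[v]]$ (clique number at most $k-1$, hence covered by the induction hypothesis) and $G[N_G^2(v)\setminus N_G[v]]$. The first piece is then easy to color, but the second may still have clique number $k$, so the recursion does not immediately close. Overcoming this seems to require either a model-free reworking of the Chudnovsky--Scott--Seymour proof of \Cref{thm:string-2-controlled}, whose current argument is geometric, or a new combinatorial decomposition of string graphs---for instance via the Fox--Pach balanced separator---that produces the required control structure purely from the graph. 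This is, in my view, where the entire approach stands or falls.
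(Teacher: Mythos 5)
This statement is \Cref{conj:string-b-bounded-alg}, which the paper poses as an \emph{open conjecture} and does not prove; the authors only remark that a positive and algorithmizable solution to \Cref{conj:induced-minors-asdim} would likely imply it. Your proposal is therefore not being measured against a proof in the paper, and indeed it is not a proof: you yourself concede that the central step --- providing the $(f,2)$-control oracle and the accompanying $2$-bounded coloring oracle without an intersection model --- is exactly where ``the entire approach stands or falls.'' That concession is accurate. The proof of \Cref{thm:string-2-controlled} in Chudnovsky--Scott--Seymour is geometric and consumes the curve representation, and no purely combinatorial, polynomial-time replacement is known; this is precisely the obstruction that makes the statement a conjecture rather than a theorem. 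Your diagnosis of where the model is used (only in constructing the control oracles, with the rest of the machinery of \Cref{thm:b-bounded} being purely combinatorial and enumerable in $n^{C(k,r)}$ time) matches the paper's own perspective, but diagnosing the gap is not the same as closing it.

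A secondary issue concerns robustness. Testing for an induced $1$-subdivision of $K_{3,3}$ is a valid \emph{sufficient} reason to reject, but it is far from the only way a graph can fail to be a string graph, and the algorithm is only allowed to say ``not a string graph'' when that is correct. So every later step of your simulated argument that could fail on a non-string input must come with its own polynomial-time-verifiable certificate of non-string-ness (or must simply never fail when the input happens to satisfy the combinatorial hypotheses). Your plan gestures at ``a specific bounded-size combinatorial obstruction detected along the way,'' but since the step that fails is the geometric $2$-control argument, no such bounded-size obstruction is identified --- and identifying one would essentially amount to the model-free reworking you acknowledge you do not have. The brute-force computation of $\omega(G)$ and $\beta(G)$ and the enumeration of multicovers and $\burling{k}$-embeddings are all fine and fit the time budget, but they are not where the difficulty lies.
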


Algorithms with specification like in \Cref{conj:string-b-bounded-alg} are called \emph{robust}; see~\cite{RS03}.
We remark that a positive and algorithmizable solution to \Cref{conj:induced-minors-asdim} is likely to imply a positive solution to \Cref{conj:string-b-bounded-alg}.

Although it is \NP-hard in general~\cite{Kar72}, in many classes of graphs, the maximum size of a clique can be computed in polynomial time \cite{BJK+18,CZ17,CCJ90,Gav73,GLS84,Spi03}.
For such $\chi$-bounded graph classes, this allows for a polynomial-time approximation algorithm for the chromatic number.
If one could determine (or at least approximate) both $\omega(G)$ and $\beta(G)$ in a Burling-controlled graph class, then a polynomial-time approximation algorithm for the chromatic number in the class would also be obtained.
So far, we do not know the complexity of determining or approximating $\beta(G)$, even for $G$ being an arbitrary graph, or a Burling graph.
A polynomial-time approximation algorithm for $\beta(G)$ when $G$ is a Burling graph would yield a polynomial-time approximation algorithm for the chromatic number of Burling graphs, which remains open.

In \Cref{thm:burling-minimal}, we proved that Burling graphs form a minimal hereditary class of graphs with unbounded chromatic number, just like the class of complete graphs.
No such minimal class other than the two is known.
The notion of Burling-control highlights the importance of such minimal classes.
Other classical constructions of triangle-free graphs with large chromatic number appear not to be such minimal classes.
It would be exciting to have any further examples of a minimal hereditary class of graphs with unbounded chromatic number, as they are bound to shed further light on the fundamentals of what causes a graph to have large chromatic number.

\section*{Acknowledgements}

Significant part of this work was done during the Structural Graph Theory Workshop STWOR2, which took place in Chęciny, Poland, in July 2024.
The workshop was supported by the Excellence Initiative -- Research University (IDUB) funds of the University of Warsaw, as well as the project BOBR that is funded from the European Research Council (ERC) under the European Union's Horizon 2020 research and innovation programme with grant agreement No.\ 948057.
We are grateful to organizers and other participants for a nice and productive atmosphere.

\bibliographystyle{plain}
\bibliography{burling}

\end{document}